\newtheorem{theorem}{Theorem}[section]
\newtheorem{lemma}[theorem]{Lemma}
\newtheorem{proposition}[theorem]{Proposition}
\newtheorem{corollary}[theorem]{Corollary}
\theoremstyle{definition}
\newtheorem{definition}[theorem]{Definition}
\newtheorem{remark}[theorem]{Remark}
\numberwithin{equation}{section}
\renewcommand{\div}{\mathrm{div}\,}				
\newcommand{\divh}{\mathrm{div}_{\!H}\,}		
\newcommand{\nablah}{\nabla_{\!H}\,} 			
\newcommand{\Deltah}{\Delta_{\!H}\,}  			
\newcommand{\lso}{L^2_{\overline{\sigma}}} 			
\newcommand{\ls}{L^2_{\sigma}} 			
\DeclareMathOperator{\nablatoalpha}{\nabla^{\alpha}}
\providecommand{\norm}[1]{\left\| #1 \right\|} 
\renewcommand{\d}{\operatorname{d}} 
\DeclareMathOperator{\dt}{\d\! t}		
\DeclareMathOperator{\ds}{\d\! s}		
\DeclareMathOperator{\ddt}{\frac{\d}{\dt}} 
\DeclareMathOperator{\tddt}{\tfrac{\d}{\dt}} 
\newcommand{\R}{\mathbb{R}}
\newcommand{\N}{\mathbb{N}}
\newcommand{\PP}{\mathbb{P}}
\title[Primitive Equations with Horizontal Viscosity]{Primitive Equations with Horizontal Viscosity: The Initial Value and the Time-Periodic Problem for Physical Boundary Conditions}
\subjclass[2010]{Primary: 35Q35;
Secondary: 35A01, 35K65, 35Q86, 35M10, 76D03, 86A05, 86A10.}
\keywords{primitive equations, horizontal viscosity, initial value problem, time-periodic solutions}   
\author[Hussein]{Amru Hussein}
\address{Department of Mathematics,
	TU Kaiserslautern, Paul-Ehrlich-Stra{\ss}e 31,
	67663 Kaiserslautern, Germany}
\email{hussein@mathematik.uni-kl.de}
\author[Saal]{Martin Saal}
\address{Scuola Normale Superiore, Piazza dei Cavalieri 7, 56126 Pisa, Italy}
\email{martin.saal@sns.it}
\author[Wrona]{Marc Wrona}
\address{Departement of Mathematics,
	TU Darmstadt, Schlossgartenstr. 7, 64289 Darmstadt, Germany}
\email{wrona@mathematik.tu-darmstadt.de}
\begin{document}

\begin{abstract}
The $3D$-primitive equations with only horizontal viscosity are considered on a cylindrical domain $\Omega=(-h,h) \times G$, $G\subset \R^2$ smooth, with the physical Dirichlet boundary conditions on the sides. Instead of considering a vanishing vertical viscosity limit, we apply a direct approach which in particular avoids unnecessary boundary conditions on top and bottom. For the initial value problem, we obtain existence and uniqueness  of local $z$-weak solutions for initial data in $H^1((-h,h),L^2(G))$ and local strong solutions for initial data in $H^1(\Omega)$. 
If $v_0\in H^1((-h,h),L^2(G))$, $\partial_z v_0\in L^q(\Omega)$ for $q>2$, then the $z$-weak solution regularizes instantaneously and thus extends to a global strong solution.
This goes beyond the global well-posedness result by Cao, Li and Titi (J. Func. Anal. 272(11): 4606-4641, 2017) for initial data near $H^1$ in the periodic setting. For the time-periodic problem, existence and uniqueness of $z$-weak and strong time periodic solutions is proven for small forces. 
Since this is a model with hyperbolic and parabolic features for which classical results are not directly applicable, such results for the time-periodic problem even for small forces are not self-evident.
\end{abstract}

\maketitle


\section{Introduction and main results}
The $3D$-primitive equations are one of the fundamental models for geophysical flows, and they are used for describing oceanic and atmospheric dynamics. They are derived from the Navier-Stokes equations assuming a hydrostatic balance.
The subject of this work are the initial value and time-periodic problem for the primitive equations with only horizontal viscosity and the physical lateral Dirichlet boundary conditions.

The motivation to study this problem is that in many geophysical models the horizontal viscosity is considered to be dominant and the vertical viscosity is neglected. From the analytical point of view such models with only partial viscosity terms are also very interesting since they combine features of both parabolic diffusion equations in horizontal directions represented  by the term $-\Deltah$ and hyperbolic transport equations in vertical direction represented by the term $w \partial_z v$, compare \eqref{eq:primeqhorvisc} below. Roughly speaking, one thus expects that regularity is preserved in the vertical direction while it is smoothed in the horizontal directions. Following this intuition allows us to identify classes of initial data for which this problem is locally or even globally well-posed. 

Many forces acting on geophysical flows such as the attraction by the moon, which becomes visible in the falling and rising tides, are time-periodic. Moreover, in some models the wind is described as a perturbation of a periodic function. 
A time-periodic force adds in each period energy to the system, and since there is only partial viscosity it is not self-evident whether the system remains stable enough to have time-periodic solutions. However, it turns out that at least for forces being small over one period of time, there are unique small time-periodic solutions. 

This work  is part of the third author's PhD thesis \cite{Wrona2020}, and therein some of the computations are elaborated in more detail.

\subsection{Primitive equations with only horizontal viscosity}
To be precise, here time intervals $(0,{T})$ for ${T}\in (0,\infty)$ and a cylindrical domain $\Omega$ are considered, 
\begin{align}\label{eq:Omega}
\Omega :=  (-h, h) \times  G \subset \R^3, \quad \hbox{for} \quad h > 0 \quad \hbox{and a smooth domain} \quad G \subset \R^2,
\end{align}
where
the boundary $\partial \Omega$ decomposes into a lateral, upper and bottom part
\begin{align*}
\Gamma_l:= \partial G\times [-h,h], \quad \Gamma_u:=  G \times \{+h\}, \quad  \Gamma_b:=  G \times \{-h\}.
\end{align*}
The primitive equations  
describe the velocity $u=(v,w)\colon \Omega \rightarrow \R^3$ and the pressure $p\colon \Omega \rightarrow\R$ of a fluid, where $v = (v_1, v_2)$ denotes the
horizontal components and $w$ stands for the vertical one. The primitive equations with horizontal viscosity are
\begin{align}
\left\{
\begin{array}{rll}
  \partial_t v + v \cdot \nablah v + w \partial_z v
  - \Deltah v + \nablah p&=f, & \text{ in } \Omega
  \times (0, {T}),\\
   \partial_z p&=0, &\text{ in } \Omega \times (0, {T}), \\
  \divh v + \partial_z w&=0, & \text{ in } \Omega \times
  (0, {T}), \\
  v(t=0)&=v_0, & \text{ in } \Omega
\end{array}\right. \label{eq:primeqhorvisc}
\end{align}
which are supplemented by the boundary conditions
\begin{align}\label{eq:bc}
v=0 \quad \hbox{on }\Gamma_l \times (0,{T})\quad \hbox{and} \quad w =0 \quad \hbox{on} \quad \Gamma_u\cup \Gamma_b \times (0,{T}).
\end{align}
The first boundary condition is a lateral no-slip boundary condition and the latter is due to the divergence free condition $\div u =0$ and $\nu\cdot u =0$ for the outer normal derivative $\nu$ on $\partial \Omega$.
Here, 
$x,y\in G$ are the horizontal coordinates and $z\in (-h,h)$ the vertical coordinate, 
$\nablah =(\partial_x, \partial_y)^T$, $\divh = \nablah^{\ast}$ and $\Deltah=\partial_x^2+\partial_y^2$ denote the horizontal
gradient, divergence and Laplacian, respectively and 
$v \cdot \nablah=v_1\partial_x+v_2\partial_y$. 
Note, that for the primitive equations the nonlinear term $w\partial_z v$ is stronger 
compared to the nonlinearity of the Navier-Stokes equation since $w=w(v)$ given by \eqref{eq:w} below involves first order derivatives, while the pressure here is only two-dimensional.

For simplicity we have formulated the equations without the Coriolis force, 
but being a zero order term it does not alter the well-posedness results 
discussed here. Moreover, we consider only the velocity equation without temperature or salinity focusing on the mathematical difficulties.  
The general anisotropic primitive equations are given if one replaces in \eqref{eq:primeqhorvisc} the term $\Deltah$ by $\nu_1\Deltah + \nu_2\partial_{zz}$ for  horizontal viscosity $\nu_1\geq 0$ and vertical viscosity $\nu_2\geq 0$. Here, physical constants are normalized to one, thus we consider the case $\nu_1=1$ and $\nu_2=0$.

\subsection{Previous results} 
Cao, Li and Titi, see \cite{CaoLiTiti2016, CaoLiTiti2017}, have been the first to study the primitive equations with only horizontal viscosity analytically. 
They tackled this problem in a periodical setting by considering a vanishing vertical viscosity limit, i.e., 
\begin{align*}
-\Deltah - \varepsilon
\partial_z^2 \quad \hbox{for}\quad  \varepsilon\to 0,
\end{align*}
and by this strategy they obtained remarkable global strong well-posedness results for the initial value problem  for initial data with regularity near $H^1(\Omega)$, and local well-posedness for initial data in $H^1(\Omega)$.  
Recently, the second author \cite{Saal} applied a more direct approach considering the system without vanishing viscosity limit. Thereby local well-posedness results even for less partial viscosities has been proven, and for only horizontal viscosity unnecessary boundary conditions on bottom and top have been avoided. 

Note that for the Navier-Stokes equations with only horizontal viscosity there are also some local well-posedness results, cf. \cite[Chapter 6]{Danchin}. 

The mathematical 
analysis of the initial value problem for the primitive equations with full viscosity, i.e., with viscosity term $\nu_1\Deltah + \nu_2\partial_{zz}$ where $\nu_1,\nu_2>0$, has been started by Lions, 
Temam and Wang \cite{Lionsetal1992, Lionsetal1992_b, Lionsetal1993} which launched a lot of activity in the analysis of these equations.
In difference to the $3D$ Navier-Stokes equations 
the primitive equations are known to be time-global well-posed 
for initial data in $H^1(\Omega)$ by the breakthrough result 
of Cao and Titi \cite{CaoTiti2007}, see also \cite{Ziane2007} for different boundary conditions and non-cylindrical domains. Refinements of this include global well-posendess for initial data with $v_0,\partial_z v_0\in L^2(\Omega)$, see \cite{Ju2017}, or $v_0\in L^1((-h,h),L^\infty(G))$, see \cite{GigaGriesHusseinHieberKashiwabara2017NN}.  

For the inviscid $3D$-primitive equations, i.e., $\nu_1=\nu_2=0$, blow-up results are known by Wong \cite{Wong2015}, see also \cite{Caoetal2015}, and there are ill-posedness results for Sobolev spaces by Han-Kwan and Nguyen \cite{HanNguyen2016}. Local well-posedness has been proven only for analytical data by Kukavica et al.
\cite{Kukavica2011}. The primitive equations with partial viscosity are an intermediate model between these well- and ill-posed situations.    

For more information on previous results on the primitive
equations we refer to the works of Washington and
Parkinson \cite{WashingtonParkinson1986}, Pedlosky
\cite{Pedlosky1987}, Majda \cite{Majda2003} and Vallis
\cite{Vallis2006}; see also the recent surveys by Li and Titi 
\cite{LiTiti2016} and by Hieber and the first author \cite{HH20}. 

\subsection{Main results and discussion}
Our main results are stated in the following.
Below, the notions of weak and $z$-weak solutions are made precise in Definitions~\ref{def:weaksolution} and~\ref{def:zweaksolution}, where function spaces are introduced in Subsection~\ref{subsec:functionspaces}.
\begin{theorem}[Local solutions for the initial value problem]\label{thm:IWP_loc} 
\ \ \ \ \ \ \ \ \ \ \ \ \ \ \ \ \ \ 
\begin{enumerate}
	\item[(a)] 
	Let $f\in  L^2((0,{T}),H^1((-h,h),H^{-1}(G))^2)$ and $v_0\in H^1((-h,h),L^2(G))^2$ with $\divh \overline{v_0}=0$. Then there exists a time $T'\in (0,T]$ and a unique $z$-weak solution to the initial boundary value problem 
	\eqref{eq:primeqhorvisc}, \eqref{eq:bc}
	on $(0,T')$, i.e., a weak solution with
	\begin{align*}
		v_z\in L^{\infty}((0,T'),L^2(\Omega)^2),   \nablah v_z \in L^2((0,T'),L^2(\Omega)^{2\times 2}),
	\end{align*}
and this $z$-weak solution satisfies  $v\in C^0([0,T'],L^2(\Omega)^2)$.	
	One has $T'=T$ if $\|v_0\|_{H^1((-h,h),L^2(\Omega))}$ and $\|f\|_{L^2((0,T),H^1((-h,h),H^{-1}(G)))}$ are sufficiently small.
	\item[(b)] 
	If $v_0\in H^1(\Omega)^2$ with $v_0(\cdot, z)\vert_{\partial G}=0$ for almost all $z\in (-h,h)$,  $\divh \overline{v_0}=0$, and $f\in   L^2((0,{T}),H^1((-h,h),L^{2}(G))^2)$, then there exists a time $T'\in (0,T]$ and a unique strong solution to \eqref{eq:primeqhorvisc}, \eqref{eq:bc} 
	on $(0,T')$, i.e. a $z$-weak solution where in addition
	\begin{align*}
	\nablah v \in L^{\infty}((0,T),L^2(\Omega)^{2\times 2}), \quad 
	\Deltah v, \partial_t v \in L^2((0,T),L^2(\Omega)^2).
	\end{align*} 
	For  $\|v_0\|_{H^1}$ and $\|f\|_{L^2((0,T),H^1((-h,h),L^2(G)))}$ sufficiently small, one has \\$T'=T$.
\end{enumerate}
\end{theorem}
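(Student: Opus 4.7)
I would prove both parts by constructing Galerkin approximations in a basis compatible with the lateral Dirichlet boundary condition (for instance horizontal Laplace--Dirichlet eigenfunctions on $G$ tensored against a smooth basis in $z$, imposing no artificial condition at $z=\pm h$ and thereby mirroring the direct approach of \cite{Saal}), deriving anisotropic a priori estimates that exploit the horizontal viscosity together with the vertical regularity carried from the data, passing to the limit by Aubin--Lions-type compactness, and proving uniqueness by an energy argument on the difference equation.

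\textbf{Core a priori estimate (part (a)).} The central estimate concerns $v_z := \partial_z v$. Since $\partial_z p = 0$ and $\partial_z w = -\divh v$, differentiating the momentum equation in $z$ and testing against $v_z$ in $L^2(\Omega)$ produces, after the cancellations provided by incompressibility and $w|_{z=\pm h}=0$, an inequality schematically of the form
\begin{align*}
\tddt \|v_z\|_{L^2(\Omega)}^2 + 2\|\nablah v_z\|_{L^2(\Omega)}^2 \lesssim |I_1|+|I_2| + \|\partial_z f\|_{H^{-1}(G)}^2,
\end{align*}
with $I_1 := \int_\Omega (v_z\cdot\nablah v)\cdot v_z\,\dx$ and $I_2 := \int_\Omega (\divh v)|v_z|^2\,\dx$ the genuinely critical trilinear terms. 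Both are controlled by a slice-wise 2D Ladyzhenskaya estimate $\|u\|_{L^4(G)}^2 \leq C\|u\|_{L^2(G)}\|\nablah u\|_{L^2(G)}$ followed by Cauchy--Schwarz in $z$, yielding $|I_1|+|I_2| \lesssim \|v_z\|_{L^2(\Omega)}\|\nablah v_z\|_{L^2(\Omega)}\|\nablah v\|_{L^2(\Omega)}$. After Young's inequality the gradient of $v_z$ is absorbed into the diffusion and, combined with the basic energy identity for $v$ itself (whose trilinear term vanishes by $\divh v+\partial_z w = 0$ and $w|_{z=\pm h}=0$), Gronwall's lemma yields local existence on some $(0,T')$ and, for small $\|v_0\|_{H^1((-h,h),L^2(G))}$ and $\|f\|_{L^2((0,T),H^1((-h,h),H^{-1}(G)))}$, $T'=T$. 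These bounds transfer to the Galerkin scheme and the ensuing compactness suffices to identify all nonlinearities in the limit; uniqueness in the $z$-weak class follows from the same anisotropic estimates applied to the difference, and the continuity $v\in C^0([0,T'],L^2(\Omega))$ is routine from $v\in L^2((0,T'),H^1_H(\Omega))$ with $\partial_t v$ in the dual.

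\textbf{Strong solutions (part (b)) and main obstacle.} To upgrade to a strong solution I would additionally test the equation against $-\Deltah v$ (or equivalently against $\partial_t v$), gaining simultaneous control of $\nablah v$ in $L^\infty((0,T'),L^2(\Omega))$ and of $\Deltah v,\partial_t v$ in $L^2((0,T'),L^2(\Omega))$. The only genuinely new nonlinear contribution, $\int_\Omega w(\partial_z v)\cdot\Deltah v\,\dx$, must be handled without any vertical smoothing: writing $w = -\int_{-h}^z \divh v\,\dz'$, redistributing the $z$-derivative by integration by parts, and bounding the resulting factors either against norms already controlled through part (a) or against $\|\Deltah v\|_{L^2(\Omega)}$ with a small absorbable prefactor, one closes the estimate and again concludes by Gronwall; smallness upgrades $T'$ to $T$. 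The principal obstacle throughout is precisely the absence of vertical diffusion: every trilinear term arising from $w\partial_z v$ or from $z$-differentiating $(v\cdot\nablah)v$ must be closed using horizontal smoothing alone, fed only by vertical regularity inherited from the initial data. The anisotropic Ladyzhenskaya-type inequalities are effectively saturated, which is exactly what forces the local-in-time nature of the general statement and the smallness condition for globality.
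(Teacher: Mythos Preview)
Your overall architecture matches the paper closely: Galerkin scheme with a tensor basis (horizontal Dirichlet/Stokes eigenfunctions times vertical cosines, so no artificial condition at $z=\pm h$), $L^2$ energy identity, a differentiated-in-$z$ energy estimate for $v_z$, compactness, and uniqueness via the difference equation. Part (b) is likewise handled in the paper by testing with $-\Deltah v$ and with $\partial_t v$, essentially as you propose.

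There is, however, a genuine error in your core trilinear bound. You claim that slice-wise Ladyzhenskaya followed by ``Cauchy--Schwarz in $z$'' gives
\[
|I_1|+|I_2| \;\lesssim\; \|v_z\|_{L^2(\Omega)}\,\|\nablah v_z\|_{L^2(\Omega)}\,\|\nablah v\|_{L^2(\Omega)}.
\]
This is too strong. After Ladyzhenskaya on each slice you face $\int_{-h}^h \|v_z\|_{L^2(G)}\|\nablah v_z\|_{L^2(G)}\|\nablah v\|_{L^2(G)}\,dz$, a product of \emph{three} $L^2_z$ functions; Cauchy--Schwarz cannot put all three in $L^2_z$. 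One factor must go into $L^\infty_z$, and the one-dimensional embedding $L^\infty_z\hookleftarrow H^1_z$ then costs an extra half-power of a $z$-derivative. The paper does exactly this (its Lemma on trilinear estimates), obtaining instead
\[
|I_1|\;\lesssim\;\|\nablah v_z\|_{L^2}^{3/2}\,\|v_z\|_{L^2}\,\|\nablah v\|_{L^2}^{1/2}\;+\;\|\nablah v_z\|_{L^2}\,\|v_z\|_{L^2}\,\|\nablah v\|_{L^2},
\]
which after Young leaves a \emph{quartic} residual $\|\nablah v\|_{L^2}^2\|v_z\|_{L^2}^4$ on the right. This is why the paper invokes a \emph{nonlinear} Gr\"onwall lemma and why the result is genuinely local (with $T'=T$ only under smallness). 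Your version, if it held, would close with linear Gr\"onwall (since $\int_0^T\|\nablah v\|_{L^2}^2$ is bounded by the basic energy) and would yield $T'=T$ unconditionally, contradicting the very structure of the theorem. So the scaling of your estimate needs to be corrected; once it is, you are essentially on the paper's track.

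A smaller remark: the continuity $v\in C^0([0,T'],L^2)$ is not as routine as you suggest, because placing $w\partial_z v$ in an appropriate dual space is delicate. The paper instead shows directly that the Galerkin sequence is Cauchy in $C^0([0,T'],L^2(\Omega))$ using the same anisotropic trilinear estimates.
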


\begin{remark}\label{rem:contdep}
	Continuous dependence on the data can be proven as well by adapting the estimates obtained in the proof of Theorem~\ref{thm:IWP_loc}.
\end{remark}

\begin{theorem}[Global solutions for the initial value problem]\label{thm:IWP_glob} 
\ \ \ \ \ \ \ \ \ \ \ \ \ \ \ \ \ \ 
\begin{enumerate} 
		\item[(a)] Let $v_0\in H^1_\eta(\Omega)^2$  with $v_0(\cdot, z)\vert_{\partial G}=0$ for almost all $z\in (-h,h)$, $\divh \overline{v_0}=0$ and $f\equiv 0$, where
	\begin{align*}
	H^1_\eta(\Omega):=\{v \colon \Vert v_0\Vert _{H^1_\eta}=\Vert v_0\Vert_{H^1}+\Vert\partial_zv_0\Vert _{2+\eta}+\Vert v_0\Vert _\infty <\infty\},
	\end{align*}
	for some $\eta >0$, then there exists a unique global strong solution to \eqref{eq:primeqhorvisc}, \eqref{eq:bc} on $(0,T)$ for any $T>0$. Moreover,
	\[\sup_{t\in [0,{T}]}(\Vert \nabla v\Vert _2^2+\Vert \partial _zv\Vert _{2+\eta}^{2+\eta})+\int_0^{T}\Vert \nabla_H\nabla v\Vert _2^2\le C_{\eta ,h, T}(\Vert v_0\Vert _{H^1_\eta}),
		\] for an increasing function $C_{\eta ,h, T}$ depending on $\eta $, $h$, $T$.
\item[(b)] Let $v_0\in H^{1}((-h,h),L^2(G)^2)$ with $\partial_z v_0\in L^{2+\eta}(\Omega)^2$ for $\eta>0$ with $\divh \overline{v_0}=0$ and $f\equiv 0$. Then the unique $z$-weak solution from Theorem~\ref{thm:IWP_loc} (a) extends to a unique global strong solution to \eqref{eq:primeqhorvisc}, \eqref{eq:bc} on $(0,T)$ for any $T>0$. Moreover, for any $\delta \in (0,T)$
	\[\sup_{t\in [\delta,{T}]}(\Vert \nabla v\Vert _2^2+\Vert \partial _zv\Vert _{2+\eta}^{2+\eta})+\int_{\delta}^{T}\Vert \nabla_H\nabla v\Vert _2^2\le C_{T,h,\delta,v_0}
\] for a constant $C_{T,h,\delta, v_0}$, depending on $T,h,\delta, v_0$. 
\end{enumerate}
\end{theorem}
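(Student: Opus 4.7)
The plan is to upgrade the local solutions of Theorem~\ref{thm:IWP_loc} to global ones by deriving a priori bounds that do not blow up in finite time. For part (a) we start from the local strong solution of Theorem~\ref{thm:IWP_loc}(b), and aim to establish global bounds on $\|\nabla v\|_2$ and $\|\partial_z v\|_{2+\eta}$. For part (b) we start from the $z$-weak solution of Theorem~\ref{thm:IWP_loc}(a) and show that after any $\delta>0$ the solution has enough regularity to invoke part (a). The heart of both parts is a Cao--Titi-style estimate for $\partial_z v$ in $L^{2+\eta}$, adapted to only horizontal viscosity and Dirichlet lateral boundary conditions.

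I would proceed in four steps. First, derive the basic $L^2$ energy identity $\|v(t)\|_2^2+2\int_0^t\|\nabla_H v\|_2^2\le \|v_0\|_2^2$, and, splitting $v=\overline v+\widetilde v$ into barotropic and baroclinic parts, observe that $\overline v$ satisfies a two-dimensional Navier--Stokes type equation on $G$, yielding a global $H^1$-bound on $\overline v$ depending only on $\|v_0\|_2$. Second, propagate the $L^\infty$ bound: combining the $L^\infty$ information on $v_0$ with a Moser/De Giorgi iteration in the horizontal directions (the vertical part of the equation is transport and does not destroy $L^\infty$ bounds, thanks to the boundary condition $w=0$ on $\Gamma_u\cup\Gamma_b$), one obtains $\|v(t)\|_\infty\le C_T(\|v_0\|_\infty,\|v_0\|_2)$.

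Third, and most importantly, derive an $L^{2+\eta}$ estimate for $\partial_z v$. Differentiating \eqref{eq:primeqhorvisc} in $z$, using $\partial_z w=-\divh v$, and testing with $|\partial_z v|^\eta\partial_z v$, a short integration by parts turns the critical term $w\,\partial_z^2 v$ into $\tfrac{1}{2+\eta}\int(\divh v)\,|\partial_z v|^{2+\eta}$, while $v\!\cdot\!\nablah\partial_z v$ produces a similar structure after horizontal integration by parts. The resulting inequality has the schematic form
\begin{equation*}
\tfrac{1}{2+\eta}\tddt\|\partial_z v\|_{2+\eta}^{2+\eta}
+c\bigl\|\nablah|\partial_z v|^{(2+\eta)/2}\bigr\|_2^2
\le C\int_\Omega (|\nablah v|+|\divh v|)\,|\partial_z v|^{2+\eta}.
\end{equation*}
Using Gagliardo--Nirenberg in the horizontal slices together with the 2D bound on $\overline v$ and interpolation against the dissipation term, this closes into a Gronwall-type inequality driven by $\int_0^T\|\nablah v\|_2^2$, which is already under control from step one. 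Finally, test \eqref{eq:primeqhorvisc} with $-\Deltah v$; the new nonlinear contributions involve $w\,\partial_z v$ and $v\cdot\nablah v$, and can be estimated by using the $L^{2+\eta}$-bound on $\partial_z v$ just obtained (for the vertical velocity $w=-\int_{-h}^z\divh v$) together with the $L^\infty$-bound on $v$. This yields $\nablah v\in L^\infty((0,T),L^2)$ and $\Deltah v\in L^2((0,T),L^2)$, upgrading the local strong solution to a global one and producing the claimed quantitative estimate.

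For part (b), the $z$-weak solution already provides $\partial_z v\in L^\infty_tL^2_x$ and $\nablah\partial_z v\in L^2_tL^2_x$, so the estimate of step three can be run starting from $t=0$ to propagate the extra assumption $\partial_z v_0\in L^{2+\eta}$. Since $\nablah v\in L^2_tL^2_x$ and $\partial_z v\in L^\infty_tL^{2+\eta}_x$ imply, by interpolation with the horizontal regularization, that $v(\delta)$ lies in $H^1(\Omega)$ with the hypothesis of part (a) at time $\delta$ for a.e.\ $\delta\in(0,T)$, the statement reduces to part (a) shifted in time; the quantitative bound depends on $\delta$ through the regularization. The main obstacle throughout is closing the $L^{2+\eta}$ estimate in step three: without vertical viscosity the vertical transport term $w\,\partial_z v$ cannot be absorbed by the dissipation in the obvious $L^2$ framework, and it is precisely the extra integrability provided by the exponent $2+\eta$ that leaves room in Gagliardo--Nirenberg to absorb the critical contributions; the Dirichlet condition on $\Gamma_l$ also requires care since the periodic-in-horizontal tricks of Cao--Li--Titi are no longer available and one must work with the horizontal Stokes/Laplace structure on the bounded cross-section $G$.
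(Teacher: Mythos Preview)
There is a genuine gap in your Step~2: you cannot propagate an $L^\infty$ bound on $v$ by a Moser--De Giorgi iteration here. Testing the momentum equation with $|v|^{q-2}v$ leaves you with the pressure term $-\int_\Omega \nablah p\cdot |v|^{q-2}v$, and since $p$ is determined by a two-dimensional elliptic problem with right-hand side $\divh\overline{v\otimes v}$, the best one can extract (see Proposition~\ref{prop_lq_est} in the paper) is a bound of the form $\Vert v(t)\Vert_{L^q}^2\le K(T)\bigl(q+1+\Vert v_0\Vert_{L^q}^2\bigr)$. The explicit factor $q$ survives after the iteration and yields $\Vert v\Vert_{L^q}\lesssim\sqrt{q}$, which diverges as $q\to\infty$; no amount of $L^\infty$ information on $v_0$ removes it. This is not a technicality: it is precisely why the paper introduces the logarithmic Sobolev inequality (Proposition~\ref{prop_log_sob}) to turn the $\sqrt{q}$-growth into $\Vert v\Vert_\infty^2\le C\log B$, where $B$ contains $\Vert\Deltah v\Vert_2$ and $\Vert\nablah\partial_z v\Vert_2$, and then closes via a logarithmic Gr\"onwall inequality applied to a combined quantity $A$ involving $\Vert\nablah v\Vert_2^2$ and $\Vert\partial_z v\Vert_{2+\eta}^{2+\eta}$.

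Your Step~3 inherits the same problem. The inequality you write, with right-hand side $\int_\Omega|\nablah v|\,|\partial_z v|^{2+\eta}$, is correct, but closing it ``driven by $\int_0^T\Vert\nablah v\Vert_2^2$'' does not work: the trilinear estimate (Lemma~\ref{lemma:lowreg}(a)) forces a factor $\Vert\nablah v\Vert_{H^1_zL^2_{xy}}$, i.e.\ $\Vert\nablah\partial_z v\Vert_2$, which is exactly what you are trying to bound. The paper instead integrates by parts horizontally to move the derivative onto $\partial_z v$, producing $\int_\Omega |v|^2|\partial_z v|^{2+\eta}$ and hence the coefficient $\Vert v\Vert_\infty^2$ (Proposition~\ref{prop_vert_fo_est}); this is then handled jointly with the $\nablah v$ estimate via the logarithmic mechanism above. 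For part~(b) your outline is closer to the paper's, but note that the hypothesis of part~(a) includes $v_0\in L^\infty$, so you must also explain why the $z$-weak solution regularizes into $L^\infty$ at positive times; the paper obtains this by first upgrading to a strong solution on $(t_1,T'')$ and then applying a mixed-derivative argument to reach $H^{3/4}_zH^{5/4}_{xy}\hookrightarrow L^\infty(\Omega)$.
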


Note that the regularity of the initial value in Theorem~\ref{thm:IWP_loc} (a) is similar to the one obtained for the Navier-Stokes equation with horizontal viscosity, compare \cite[Theorem 6.2]{Danchin}. It is also the same condition obtained by Ju for the existence and uniqueness of global $z$-weak solutions for the  primitive equations with full viscosity, see \cite{Ju2017}. 

Theorem~\ref{thm:IWP_loc} (b) and Theorem~\ref{thm:IWP_glob} (a) correspond to the result by Cao, Li and Titi in \cite[Theorem 1.1]{CaoLiTiti2017}. However, they consider a cubical domain with periodic boundary conditions in all three directions. As already pointed out in \cite{Saal}, vertical boundary conditions are not necessary, but they are preserved by the equation. Here, we consider the more physical Dirichlet boundary conditions on the sides and no boundary condition on top and bottom. 

The proof of the \textit{a priori} bounds in \cite{CaoLiTiti2017} uses a vanishing vertical viscosity limit. Here, we follow a more direct approach considering the case of horizontal viscosity without such limits. For the global \textit{a priori} bound, we have been able to adapt the overall strategy of Cao, Li and Titi, but due to the boundary conditions here, controlling the pressure terms becomes more involved. Note also that in \cite{CaoLiTiti2017} the \textit{a priori} bound is proven for periodic boundary conditions in all three directions, here we do not require any boundary conditions on the top and bottom part of the boundary.

Moreover, using that regularity is preserved in the vertical directions while being smoothed in the horizontal directions, we show in Theorem~\ref{thm:IWP_glob} (b) that a $z$-weak solution with slightly more integrability of the initial data  regularizes to reach the setting of Theorem~\ref{thm:IWP_glob} (a) for $t>0$. Thus existence and uniqueness of global strong solutions holds even for a larger class of initial conditions. It is remarkable that the regularity of the initial conditions required in  \ref{thm:IWP_glob} (b) is very close to the one obtained by Ju for the case of full viscosity, cf. \cite{Ju2017}.

Furthermore, for the local well-posedness results a force term is included here which allows us to analyze the time-periodic problem.     
The notion of $T$-periodic $z$-weak solutions is explained in Definition~\ref{def:waekper} below.
\begin{theorem}[Time-periodic problem]\label{thm:periodic}
There  exists $\varepsilon>0$ 		
such that
	\begin{enumerate}
		\item[(a)] if $f\in  L^2((0,T),H^1((-h,h),H^{-1}(G))^2)$ and $\|f\|_{ L^2((0,T),H^1((-h,h),H^{-1}(G))}\!<\varepsilon$, then there exists a unique $z$-weak $T$-periodic solution $v$,  where $v$ lies in the regularity class given in Theorem~\ref{thm:IWP_loc} $(a)$; 
		\item[(b)] if $f\in  L^2((0,T),H^1((-h,h),L^2(G))^2)$ with $\|f\|_{ L^2((0,T),H^1((-h,h),L^2(G)))}<\varepsilon$, then  there exists a unique strong $T$-periodic solution $v$, i.e., $v$ lies in the regularity class given in Theorem~\ref{thm:IWP_loc} $(b)$. 
	\end{enumerate}
\end{theorem}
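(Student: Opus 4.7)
The plan is to reformulate the $T$-periodic problem as a fixed-point equation for the Poincar\'e (period) map, and to solve it by Banach's contraction principle. Fix $f$ in the class of part (a) with $\|f\|_{L^2((0,T),H^1((-h,h),H^{-1}(G)))} \le \varepsilon$. By the final assertion in Theorem~\ref{thm:IWP_loc}(a), there is $r > 0$ (to be chosen below) such that for every divergence-free, laterally vanishing initial value $v_0$ in the closed ball
$B_r\subset H^1((-h,h),L^2(G))^2$
the $z$-weak solution $v(\,\cdot\,;v_0,f)$ exists on all of $[0,T]$. Set
\[
\Phi\colon B_r \to H^1((-h,h),L^2(G))^2, \qquad \Phi(v_0) := v(T;\,v_0,f).
\]
Extending $f$ periodically to $\R$, a fixed point of $\Phi$ is precisely the initial trace of a $T$-periodic $z$-weak solution, and conversely any such solution produces a fixed point.

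The task is then to choose $r$ and $\varepsilon$ so that $\Phi$ is a strict contraction of $B_r$ into itself. The crucial structural input is the lateral Poincar\'e inequality: since $v(\,\cdot\,,z) \in H^1_0(G)^2$ for almost every $z\in(-h,h)$, one has $\|v\|_{L^2(\Omega)} \le C_G \|\nablah v\|_{L^2(\Omega)}$ and likewise for $\partial_z v$. Combined with the energy identities that already underlie the proof of Theorem~\ref{thm:IWP_loc}(a), this upgrades the standard $L^2$ and $\partial_z$-$L^2$ energy balances to a differential inequality of the form
\[
\tddt \|v\|_{H^1((-h,h),L^2)}^2 + \lambda \|v\|_{H^1((-h,h),L^2)}^2 \le C(r)\,\|v\|_{H^1((-h,h),L^2)}^2 + C\|f\|^2,
\]
where $\lambda>0$ is governed by the lateral Poincar\'e constant and $C(r)\to 0$ as $r\to 0$. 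Choosing $r$ so small that $C(r) < \lambda$ and integrating over $[0,T]$ yield $\|\Phi(v_0)\|^2 \le \alpha\|v_0\|^2 + C'\varepsilon^2$ with $\alpha := e^{-(\lambda - C(r))T} < 1$; taking $\varepsilon$ small enough that $C'\varepsilon^2 \le (1-\alpha)r^2$ then gives $\Phi(B_r)\subset B_r$. The difference $V = v^1 - v^2$ of two solutions with initial data in $B_r$ satisfies the analogous linear equation with vanishing forcing, and a reapplication of the same Poincar\'e-assisted estimate produces
\[
\|\Phi(v_0^1) - \Phi(v_0^2)\|_{H^1((-h,h),L^2)} \le q\,\|v_0^1 - v_0^2\|_{H^1((-h,h),L^2)}, \qquad q<1,
\]
for $r$ sufficiently small. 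Banach's theorem then supplies the unique fixed point in $B_r$, hence the unique small $T$-periodic $z$-weak solution.

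The main obstacle is to verify that the horizontal dissipation, activated by the lateral Poincar\'e inequality, genuinely dominates the delicate contributions from the vertical transport term $w\partial_z v$ (with $w$ recovered from $v$ via vertical integration of $\divh v$) and from the two-dimensional pressure $p$ determined by a lateral Stokes-type problem for the vertical average subject to the $\Gamma_l$-Dirichlet condition. These are precisely the critical terms already handled in the proof of Theorem~\ref{thm:IWP_loc}(a); what is needed here is the quantitative refinement that the bilinear constants multiplying them vanish as the solution size tends to zero, so that for $r$ small the gain $e^{-\lambda T}$ from horizontal dissipation is not offset. Part~(b) is obtained by running exactly the same Poincar\'e-map scheme in the stronger regularity class of Theorem~\ref{thm:IWP_loc}(b): the $H^1(\Omega)$-level energy estimates supplied there provide analogous self-map and contraction inequalities on a small ball of strong data, yielding the unique $T$-periodic strong solution.
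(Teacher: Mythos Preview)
Your overall strategy---a Poincar\'e-map fixed point with decay driven by the lateral Poincar\'e inequality---matches the paper's, but the proposed contraction step has a real gap. You claim that the difference $V=v^1-v^2$ satisfies the ``same Poincar\'e-assisted estimate'' and hence a contraction in $H:=H^1((-h,h),L^2(G))$. Testing the $\partial_z$-derivative of the difference equation with $\partial_z V$, after the usual cancellation $\langle v_2\!\cdot\!\nablah \partial_z V + w_2\partial_{zz}V,\partial_z V\rangle_\Omega=0$, one is left in particular with
\[
\langle w_V\,\partial_{zz} v_1,\ \partial_z V\rangle_\Omega,
\]
coming from $\partial_z(w_V\partial_z v_1)$. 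For $z$-weak solutions there is no vertical dissipation and $\partial_{zz}v_1$ is not controlled in any Lebesgue space; integrating by parts in $z$ merely shifts the problem to $\partial_{zz}V$. At the Galerkin level these objects exist, but their norms scale like $n^2$ and give no uniform contraction constant. This is exactly why the $H$-difference estimate is not available, whereas the $L^2$-difference estimate (the paper's Step~6 of Proposition~\ref{prop:zweaksolutions}) is: there only $\langle w_V\partial_z v_1,V\rangle_\Omega$ appears, and Lemma~\ref{lemma:lowreg} handles it.

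The paper therefore takes a different route: it establishes only the self-map $v_n(0)\mapsto v_n(T)$ on a small $H$-ball at each Galerkin level and invokes \emph{Brouwer's} fixed point theorem (no contraction needed), then extracts a limit by compactness. Uniqueness is proved separately by the $L^2$-difference estimate combined with periodicity: Gr\"onwall and the lateral Poincar\'e inequality give $\|V(T)\|_{L^2}^2\le \|V(0)\|_{L^2}^2\,e^{c\int_0^T\|\nablah v_1\|_H^2(1+\|v_1\|_H^2)\d s - CT}$, and smallness of $v_1$ makes the exponential factor strictly less than one, forcing $V(0)=0$. If you wish to keep a Banach argument, the clean fix is a weak--strong scheme: prove $\Phi(B_r)\subset B_r$ in $H$ as you do, but prove the contraction in the $L^2$-metric on the $H$-ball (which is complete by weak lower semicontinuity of $\|\cdot\|_H$). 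That contraction is precisely the paper's $L^2$-estimate and avoids $\partial_{zz}v_1$ entirely.
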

Here, existence of a $T$-periodic $z$-weak solution means that there exists a $v_0$ such that there exists a  $z$-weak solution to the initial boundary value problem \eqref{eq:primeqhorvisc},  \eqref{eq:bc} with initial condition $v_0$ and force $f$ which is $T$-periodic.

It seems that so far, there has been no result on the time-periodic problem for partial viscosities. For the primitive equations with full viscosity there are several results, see Hsia and Shiue \cite{Hsia2013} and Tachim Medjo \cite{Tachim2010}, on the existence of unique global strong time-periodic solutions for periodic forces assuming a smallness condition on the force. In contrast, in \cite{Galdi2017} existence  of strong periodic solutions for possibly large periodic forces has been shown, but the solutions are possibly non-unique. 

Here, we have adapted the strategy by Galdi, Hieber and Kashiwabara in \cite{Galdi2017} to consider the Poincar\'{e} map for the construction on time-periodic solutions where we take advantage of the \textit{a priori} estimates obtained for the initial value problem. 
A crucial ingredient in our proof is that due to the lateral Dirichlet boundary conditions, there is a Poincar\'e inequality of the type
\begin{align*}
\|v\|_{L^2(\Omega)} \leq C \|\nablah v\|_{L^2(\Omega)}.
\end{align*}

Note that for parabolic problems there are quite a few results on time-periodic solutions. For instance \L ukaszewicz et al. \cite{Lukaszewicz} treat the the case certain of semilinear parabolic equations in Hilbert spaces. There are also the maximal $L^p$-regularity approaches in Banach spaces for time-periodic solutions by Geissert et al. \cite{Huy}, and by 
Kyed and co-authors, cf. \cite{Kyed2017, Eiter2017, Celik2018} and the references therein. More concretely, for the Navier-Stokes equations there are also many results on periodic solutions going back to the work of Serrin \cite{Serrin}, see e.g. also \cite{Morimoto} and references therein for a more recent survey. We would like to emphasis that for the case of partial viscosity considered here the system is not purely parabolic anymore, and in particular since the vertical derivatives in the non-linearity cannot be controlled by the linear part all these approaches are not applicable. Instead one has to extract additional information for these particular equations.

\subsection{Organization of the paper} In the subsequent Section~\ref{sec:pre} basic definitions and notations are introduced. In particular, the spaces of hydrostatic-solenoidal functions and the notions of weak and $z$-weak solutions to the initial value and time-periodic problem and their regularity properties are discussed. In Section~\ref{sec:loc} the existence and uniqueness of local $z$-weak  and local strong solutions is proven, respectively. The time-periodic problem is discussed in Section~\ref{sec:per} including the proof of Theorem~\ref{thm:periodic}. In Section~\ref{sec:glob}, global \textit{a priori} bounds are proven, and the proof of Theorem~\ref{thm:IWP_glob} is given. 
Some auxiliary results are collected in Section~\ref{sec:ineq}.

\section{Preliminaries}  \label{sec:pre}
\subsection{Function spaces and notations}\label{subsec:functionspaces}
By $L^2(\Omega)$ we denote the standard real Lebesgue space with  scalar product
\begin{align*}
\left< f,g \right>_{\Omega}:=\int_{\Omega}f(x,y,z)g(x,y,z)\d(x,y,z),
\end{align*}
where $L^2(G)$ and $\left< f,g \right>_{G}$ are defined analogously. By $\norm{f}_{L^2(\Omega)}$ and 
$\norm{f}_{L^2(G)}$ we denote the induced norm dropping the subscripts $\Omega$ and 
$G$ in the notation if there is no ambiguity. 
For $f,g\in L^2((0,T),L^2(\Omega))$ we write
\begin{align*}
\left< f,g \right>_{\Omega,T}:=\int_{0}^T\int_{\Omega}f(t,x,y,z)g(t,x,y,z)\d(x,y,z)\d\! t
\end{align*}
for the scalar product in space and time.
For a function $f\in L^{\infty}((0,T), L^{\infty}(\Omega))$ we use the abbreviation
\begin{align*}
\norm{f}_{\infty}:=\sup_{t\in(0,T)}\norm{f(t)}_{L^{\infty}(\Omega)}.
\end{align*}

For $s\in\N$ the space $H^s(\Omega)$ consists of $f\in L^2(\Omega)$ such that 
$\nablatoalpha f \in L^2(\Omega)$ for $|\alpha|\leq s$ endowed with the norm
\begin{align*}
\norm{f}_{H^s(\Omega)}=\big(\sum_{|\alpha|\leq s} \norm{\nablatoalpha f}_{L^2(\Omega)}^2 \big)^{1/2},
\end{align*}
and $H^s_0(\Omega):=\{f\in H^s(\Omega)\colon f\vert_{\partial\Omega} =0 \}$.
Here we used the multi-index notation 
$\nablatoalpha=\partial_x^{\alpha_1} 
\partial_y^{\alpha_2}\partial_z^{\alpha_3}$ 
for $\alpha\in\N_0^3$. The spaces $H^s(G)$ and $H_0^s(G)$
are defined analogously, and we will again 
just write $\norm{f}_{H^s}$ if there is no 
ambiguity.
For non-integer $s\geq 0$, the spaces $H^s$
are defined by complex interpolation, and one sets by duality $H^{-s}=(H_0^{s})^{\prime}$, compare also \cite[Chapter 3]{Triebel}. 
Moreover, we set 
\begin{align}\label{eq:H01}
H_{0,l}^1(\Omega):=\{f\in H^1(\Omega)\colon f\vert_{\Gamma_l}=0\}
\end{align}
and $H_{l}^{-1}(\Omega):= H_{0,l}^1(\Omega)'$ is its dual space. Analogous definitions hold for Sobolev spaces $H^{s,p}$ for $p\in [1,\infty]$, where $H^s=H^{s,2}$, and for Sobolev spaces of functions with values in Banach spaces such as $H^1((0,T),L^p(\Omega))$ and $H^{s,q}((-h,h),L^{p}(G))$. Sometimes we use the short hand notation $H^{s,p}_zH^{r,q}_{xy}$ for $H^{s,p}((-h,h),H^{r,q}(G))$.

\subsection{Hydrostatic-solenoidal vector fields}\label{subsec:lso}
Now let us reformulate the primitive 
equations \eqref{eq:primeqhorvisc} and \eqref{eq:bc}. The divergence free 
condition $\partial_z w+\divh v=0$ and the 
boundary condition $w(z=\pm h)=0$ are equivalent to
\begin{align}\label{eq:w}
w(t,x,y,z) =-\divh \int_{-h}^{z}v(t,x,y,\xi) \d\! \xi \mbox{ and } 
\divh \int_{-h}^h v(t,x,y,\xi)\d\! \xi=0
\end{align}
for $v$ sufficiently smooth, e.g., $\divh v \in L^1(\Omega)$.
This means, that $\overline{v}$ -- the mean value of $v$ in the vertical direction -- is divergence free, i.e., $\divh \overline{v}=0$, where the vertical average and its complement are
\begin{align}\label{eq:defvbar} 
\overline v(t,x,y):=\frac{1}{2h}\int_{-h}^h v(t,x,y,\xi)  \d\! \xi \quad \hbox{and} \quad \tilde v:=v-\overline v.
\end{align}
Hence one identifies a suitable hydrostatic-solenoidal space as
\begin{align*}
\lso(\Omega):= \overline{\{v\in C_c^{\infty}(\Omega)^2\colon \divh \overline{v}=0  \}}^{||\cdot||_{L^2}},
\end{align*}
where $C_c^{\infty}$ stands for smooth compactly supported functions. Note that this space admits the decomposition
\begin{align}\label{eq:lso_split}
\lso(\Omega) =   \{v\in L^2(\Omega)^2\colon \overline{v}=0\} \oplus \ls(G),
\end{align}
and the hydrostatic Helmholtz projection thereon is
\begin{align}
\PP\colon L^2(\Omega)^2 \rightarrow \lso(\Omega), \quad \PP v = \tilde{v} + \PP_G \overline{v},
\end{align}
where $\ls(G)= \overline{\{\overline{v}\in C_c^{\infty}(G)^2\colon \divh \overline{v}=0  \}}^{||\cdot||_{L^2}}$ is the space of solenoidal vector fields over $G$, and $\PP_G$ the corresponding (classical) Helmholtz projection. More precisely, since due to the product structure $L^2(\Omega)=\overline{L^2(G) \otimes L^2(-h,h)}$, one obtains by applying $\PP$ that
\begin{align}\label{eq:lso_split2}
\lso(\Omega) = \overline{L^2(G)^2 \otimes L_0^2(-h,h)} \oplus \overline{\ls(G) \otimes \operatorname{span}\{1\}},
\end{align}
where $L^2_0(-h,h)=\{v\in L^2(-h,h)\colon \int_{-h}^{h} v(z) dz=0\}$ and $1\in L^2(-h,h)$ is a constant function. 

\subsection{Weak and $z$-weak solutions}\label{subsec:weak}
Next we give a precise notion of weak solutions.

\begin{definition}[Weak solution]\label{def:weaksolution}
Let $f\in L^2((0,T),L^2((-h,h),H^{-1}(G))^2)$ and  $v_0\in \lso(\Omega)$. A 
function $v$ is called \textit{a weak solution} of 
the primitive equations \eqref{eq:primeqhorvisc} with boundary conditions \eqref{eq:bc}
on $(0,T)$ with initial condition $v_0$ and force $f$ if
	\begin{enumerate}
		\item[(i)] One has that $v\colon [0,T]\rightarrow \lso(\Omega)$ is weakly continuous with $v(0)=v_0$ and 
		$$
		v\in L^{\infty}((0,T),\lso(\Omega)), \quad  \nablah v \in L^2((0,T),L^2(\Omega)^{2\times 2})
		$$
	with $v(t,z,\cdot,\cdot)\in H^1_0(G)^2$ almost everywhere for $z\in (-h,h)$;
		\item[(ii)]  For some constant $c>0$ it satisfies
		\begin{multline*}
		\left\|v\right\|_{L^{\infty}((0,T),L^2(\Omega))}
		+\left\|\nablah v\right\|_{L^2((0,T),L^2(\Omega))}\\
		\leq c \left( \left\|v_0\right\|_{L^2(\Omega)}+\left\|f\right\|_{L^{2}((0,T),H^{-1}(G))}\right);
		\end{multline*}
	\item[(iii)] 	$v$ satisfies \eqref{eq:primeqhorvisc} and \eqref{eq:bc} in the weak sense, i.e.,
		\begin{multline*}
		-\left< v, \varphi_t \right>_{\Omega,T} +\left<v(T),\varphi(T)\right>_{\Omega}  -\left<v_0,\varphi(0)\right>_{\Omega} 
		+\left< \nablah v, \nablah \varphi \right>_{\Omega,T} \\
		+\left< v\cdot \nablah v, \varphi \right>_{\Omega,T}-\left< w v, \varphi_z \right>_{\Omega,T}
		-\left< w_z v, \varphi \right>_{\Omega,T} 
		=\left< f, \varphi \right>_{\Omega,T}, 
		\end{multline*}
		where $w=w(v)$ is given by \eqref{eq:w}, holds for any 
		\begin{multline*}
			\varphi\in H^{1,1}((0,T),\lso(\Omega ))\cap C^0([0,T],\lso(\Omega )) \\ \cap L^2((0,T),L^2((-h,h),H_0^1(G))^2\cap H^1((-h,h),L^\infty (G))^2).
		\end{multline*}
	\end{enumerate}
\end{definition}

Note that there are different notions of weak solutions for the primitive equations, compare \cite{Galdi2017} or \cite{Tachim2010}. The notion of $z$-weak solutions for the primitive equations  has been introduced by Bresch et al. \cite{Bresch2003} as \textit{vorticity solutions} for the $2D$-case. It plays also an important role in the study of the $3D$-case with full viscosity, see \cite{Ju2017} and the references therein. This is adapted here to the case of only horizontal viscosity.
\begin{definition}[$z$-weak solution]\label{def:zweaksolution}
Let $f\in L^2((0,T),H^1((-h,h),H^{-1}(G))^2)$ and $v_0\in \lso(\Omega)$ with $\partial_z v_0\in L^2(\Omega)^2$. 
A weak solution $v$ 
of 
 the primitive equations \eqref{eq:primeqhorvisc} with boundary conditions \eqref{eq:bc}
 on $(0,T)$ with initial condition $v_0$ and force $f$ is called a \textit{$z$-weak solution} if 
additionally
\begin{align*}
v_z\in L^{\infty}((0,T),L^2(\Omega)^2)\quad \hbox{and} \quad \nablah v_z \in L^2((0,T),L^2(\Omega)^{2\times 2}).
\end{align*}
\end{definition}

\begin{definition}[$T$-periodic $z$-weak solutions]\label{def:waekper}
	A $z$-weak solution $v$ is called \textit{$T$-periodic} if $v(0)=v(T)$. 
\end{definition}

\begin{remark}\label{rem:nonlinterms}
\begin{itemize}
	\item[(a)] $z$-weak solutions are additionally in $C^0([0,T],L^2(\Omega))$, in this sense $v(0)=v(T)$ has to be understood in Definition \ref{def:waekper}.
 \item[(b)] For a weak solution one has for the non-linear terms 
 $w\cdot v,w_z\cdot v,v\cdot \nablah v\in L^2((0,T),L^1(\Omega)^2)$, and this guarantees that each term in the weak formulation is well-defined.	
A $z$-weak solution is regular enough to assure that even $w\cdot v_z\in L^2((0,T),L^1(\Omega)^2)$ 
and therefore $-\left< w v, \varphi_z \right>_{\Omega,T} -\left< w_z v, \varphi \right>_{\Omega,T} 
=\left< w v_z, \varphi \right>_{\Omega,T}$ for test functions $\varphi$ as in Definition~\ref{def:weaksolution}.
\end{itemize}
\end{remark}

\subsection{Regularity of $z$-weak solutions}
In the following proposition we show that for 
$z$-weak solutions the class of admissible test functions for which especially the nonlinear terms are well-defined is 
much larger than for weak solutions. This turns out to be useful when testing a $z$-weak solution with itself, and in particular
when proving the uniqueness of $z$-weak solutions. 

\begin{proposition}[Class of test functions for $z$-weak solutions]\label{lem:testfunctions}
	For \\ $f\in L^2((0,T),H^1((-h,h),H^{-1}(G))^2)$, $v_0\in \lso(\Omega)$ with $\partial_z v_0\in L^2(\Omega)$ 
	let $v$ be a $z$-weak solution to \eqref{eq:primeqhorvisc}, \eqref{eq:bc} on $(0,T)$. Then 
		\begin{align*}
		\left< \nablah v, \nablah \varphi\right>_{\Omega,T}, \quad  \left< v\cdot \nablah v, \varphi \right>_{\Omega,T}, \left< w \partial_z v , \varphi\right>_{\Omega,T}, \quad  \hbox{and } \left< f, \varphi\right>_{\Omega,T}		
		\end{align*}
are well-defined for all
		\begin{align*}
		\varphi \in L^{\infty}((0,T),\lso(\Omega )) \cap L^2((0,T),L^2((-h,h),H^1_0(G))^2).
		\end{align*}
\end{proposition}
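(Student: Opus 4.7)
The plan is to treat the two linear pairings directly, and the two nonlinear pairings by anisotropic H\"older--Ladyzhenskaya estimates. The extra regularity provided by the $z$-weak assumption, beyond the weak-solution class, comes from the one-dimensional Sobolev embedding $H^1(-h,h)\hookrightarrow L^\infty(-h,h)$: combined with $\partial_z v\in L^\infty((0,T),L^2(\Omega))$ it gives $v\in L^\infty((0,T),L^\infty_z L^2_{xy})$, and combined with $\nablah \partial_z v \in L^2((0,T),L^2(\Omega))$ it gives $\nablah v\in L^2((0,T),L^\infty_z L^2_{xy})$. In addition, $\partial_z v\in L^2((0,T),L^2_z H^1_{xy})$, and the representation \eqref{eq:w} of $w$ together with Minkowski's inequality yields $\|w(t,\cdot)\|_{L^\infty_z L^2_{xy}}\leq C\|\nablah v(t)\|_{L^2(\Omega)}$, hence $w\in L^2((0,T),L^\infty_z L^2_{xy})$.

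The viscosity pairing is bounded by Cauchy--Schwarz, as $\nablah v,\nablah\varphi\in L^2((0,T),L^2(\Omega))$. The forcing pairing is handled through the $H^{-1}(G)$--$H^1_0(G)$ duality at fixed $(t,z)$: using the continuous embedding $H^1_z H^{-1}_{xy}\hookrightarrow L^2_z H^{-1}_{xy}$ for $f$ and the admissible regularity of $\varphi$, Cauchy--Schwarz in $(t,z)$ gives a finite bound.

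For the convective pairing I fix $(t,z)$ and apply H\"older in $G$ with exponents $(4,2,4)$, followed by the 2D Ladyzhenskaya inequality $\|u\|_{L^4(G)}\leq C\|u\|_{L^2(G)}^{1/2}\|\nablah u\|_{L^2(G)}^{1/2}$ applied to $v(\cdot,\cdot,z)$ and $\varphi(\cdot,\cdot,z)$; both vanish on $\partial G$, so Poincar\'e reduces the $H^1(G)$-norms to their $\nablah$-parts. This yields the pointwise bound
\[
\int_G|v\cdot\nablah v\cdot\varphi|(\cdot,\cdot,z)\,dxdy\leq C\|v\|_{L^2_{xy}}^{1/2}\|\nablah v\|_{L^2_{xy}}^{3/2}\|\varphi\|_{L^2_{xy}}^{1/2}\|\nablah\varphi\|_{L^2_{xy}}^{1/2}.
\]
H\"older in $z$ with exponents $(\infty,2,4,4)$---valid because $v,\nablah v\in L^\infty_z L^2_{xy}$ (so in particular $\nablah v\in L^3_z L^2_{xy}$) and $\varphi,\nablah\varphi\in L^2_z L^2_{xy}$---followed by H\"older in $t$ with exponents $(\infty,\tfrac{4}{3},\infty,4)$, turns this into a finite product of norms.

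For the transport pairing I use H\"older in $G$ with exponents $(2,4,4)$ and Ladyzhenskaya on $\partial_z v\in H^1(G)$ (no boundary condition needed) and on $\varphi\in H^1_0(G)$ to obtain
\[
\int_G|w\,\partial_z v\,\varphi|(\cdot,\cdot,z)\,dxdy\leq C\|w\|_{L^2_{xy}}\|\partial_z v\|_{L^2_{xy}}^{1/2}\|\partial_z v\|_{H^1_{xy}}^{1/2}\|\varphi\|_{L^2_{xy}}^{1/2}\|\nablah\varphi\|_{L^2_{xy}}^{1/2}.
\]
H\"older in $z$ with exponents $(\infty,4,4,4,4)$, using $w\in L^\infty_z L^2_{xy}$, $\partial_z v \in L^2_z H^1_{xy}$ and $\varphi,\nablah\varphi\in L^2_z L^2_{xy}$, and H\"older in $t$ with exponents $(2,\infty,4,\infty,4)$ finishes the estimate. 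The main obstacle is the bookkeeping of these anisotropic exponents: one must absorb the limited vertical regularity of $v$, $\nablah v$ and $w$ into $L^\infty_z$ norms provided by the $z$-weak definition and the representation of $w$, while every Ladyzhenskaya application to $\varphi$ remains consistent with the admissible class $L^2((0,T),L^2_z H^1_{0,xy})$. This is exactly the gain over the weak-solution class, where $\varphi$ had to carry vertical derivatives to absorb an integration by parts in $z$ of the $w\,\partial_z v$ term.
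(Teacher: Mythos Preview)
Your proof is correct and follows essentially the same route as the paper: Ladyzhenskaya in $G$ combined with the one-dimensional Sobolev embedding $H^1(-h,h)\hookrightarrow L^\infty(-h,h)$ to place one factor in $L^\infty_z$, then H\"older in $z$ and $t$. The paper packages the core trilinear estimate as Lemma~\ref{lemma:lowreg}(a) and frames the argument through an approximation $\varphi^{(n)}\to\varphi$, whereas you show absolute integrability directly with a slightly different distribution of H\"older exponents (e.g.\ for the convective term you put $\|v\|_{L^2_{xy}}^{1/2}$ rather than $\|\nablah v\|_{L^2_{xy}}$ into $L^\infty_z$), but the substance is identical.
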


\begin{proof}
	Let $(\varphi^{(n)})_n$ be a sequence of smooth functions such that $\varphi^{(n)}\to \varphi$ for $n\to\infty$ in $ L^{\infty}((0,T),\lso(\Omega )) \cap L^2((0,T),L^2((-h,h),H^1_0(G))^2)$. Then we 
	have 
	\begin{align*}
	\left< \nablah v, \nablah \varphi^{(n)} \right>_{\Omega,T} \to \left< \nablah v, \nablah \varphi\right>_{\Omega,T} \quad\text{and} \quad 
	\left< f, \varphi^{(n)} \right>_{\Omega,T} \to \left< f, \varphi\right>_{\Omega,T}
	\end{align*}
	for $n\to\infty$.\\
	The nonlinear terms $\left< w v, \varphi^{(n)}_z \right>_{\Omega,T} +\left< \partial_z w  v, \varphi^{(n)} \right>_{\Omega,T}=
	-\left< w \partial_z v , \varphi^{(n)} \right>_{\Omega,T}$ 
	and $\left< v\cdot \nablah v, \varphi^{(n)} \right>_{\Omega,T}$ have to be to handled with more care. Using Lemma~\ref{lemma:lowreg}~a) 
	with $f=v(t)$, $g=\varphi^{(n)}(t)$ and $h=\nablah v(t)$ we obtain
	\begin{align*}
	&\int_0^T \left|  \left< v(t)\cdot \nablah v(t), \varphi^{(n)}(t) \right>_{\Omega}\right|\dt \\
	&\qquad\leq c \int_0^T \norm{\nablah v(t)}^{1/2}_{L^2(\Omega)}\norm{v(t)}^{1/2}_{L^2(\Omega)}
	\norm{\nablah \varphi^{(n)}(t)}^{1/2}_{L^2(\Omega)}\norm{\varphi^{(n)}(t)}^{1/2}_{L^2(\Omega)}\\
	&\qquad\qquad\qquad \cdot 
	\left(\|\partial_z \nablah v(t)\|^{1/2}_{L^{2}(\Omega)}\|\nablah v(t)\|^{1/2}_{L^{2}(\Omega)}+\|\nablah v(t)\|_{L^{2}(\Omega)}\right) \dt\\
	&\qquad\leq c\norm{v}^{1/2}_{L^{\infty}((0,T),L^2(\Omega))} \norm{\varphi^{(n)}}^{1/2}_{L^{\infty}((0,T),L^2(\Omega))}\\
	&\qquad\qquad\qquad \cdot \int_0^T \norm{\nablah v(t)}^{1/2}_{L^2(\Omega)}
	\norm{\nablah \varphi^{(n)}(t)}^{1/2}_{L^2(\Omega)} \|\nablah v(t)\|_{H^1((-h,h),L^{2}(G))} \dt\\
	&\qquad\leq c\norm{v}^{1/2}_{L^{\infty}((0,T),L^2(\Omega))}\norm{\nablah v}^{1/2}_{L^2((0,T),L^2(\Omega))} \|\nablah v\|_{L^2((0,T),H^1((-h,h),L^{2}(G)))}\\
	&\qquad\qquad\qquad \cdot \norm{\nablah \varphi^{(n)}}^{1/2}_{L^2((0,T),L^2(\Omega))} 	\norm{\varphi^{(n)}}^{1/2}_{L^{\infty}((0,T),L^2(\Omega))},
	\end{align*}
	and analogously we get by using Lemma~\ref{lemma:lowreg}~a) with $f=\partial_z v (t)$, $g=\varphi^{(n)}(t)$ and $h= w(t)$
	\begin{align*}
	& \int_0^T \left|\left< w(t)\partial_z  v(t), \varphi^{(n)}(t) \right>_{\Omega} \right|\dt \\
	&\qquad\leq c\norm{\partial_z v}^{1/2}_{L^{\infty}((0,T),L^2(\Omega))}\norm{\nablah \partial_z v}^{1/2}_{L^2((0,T),L^2(\Omega))} \|w\|_{L^2((0,T),H^1((-h,h),L^{2}(G)))}\\
	&\qquad\qquad\qquad \cdot \norm{\nablah \varphi^{(n)}}^{1/2}_{L^2((0,T),L^2(\Omega))} 	\norm{\varphi^{(n)}}^{1/2}_{L^{\infty}((0,T),L^2(\Omega))}.
	\end{align*}
	Considering these estimates for $\varphi^{(n)}- \varphi^{(m)}$ gives the convergence
	\begin{align*}
	\left< v(t)\cdot \nablah v(t), \varphi^{(n)}(t) \right>_{\Omega}
	\to  \left<v(t) \cdot \nablah v(t), \varphi(t) \right>_{\Omega}
	\end{align*}
	and
	\begin{align*}
	\left< w(t)\partial_z  v(t), \varphi^{(n)}(t) \right>_{\Omega}\to \left< w(t)\partial_z  v(t), \varphi(t) \right>_{\Omega}
	\end{align*} 
	in $L^1((0,T))$.
\end{proof}

Moreover, $z$-weak solutions preserve certain $L^q$-regularity vertically reflecting the transport-like behavior in this direction. 
\begin{proposition}[$L^q$-norm of $v_z$ remains bounded for $\partial_z v_0\in L^q$.]\label{prop:lq}
Let $v$ be a $z$-weak solution to \eqref{eq:primeqhorvisc}, \eqref{eq:bc} on $(0,T)$, $T>0$, with initial condition $v_0\in H^1((-h,h),L^2(\Omega)^2)$ with $\divh \overline{v_0}=0$ and force $f\equiv 0$.   
If in addition $\partial_z v_0\in L^q(\Omega)^2$ for $q> 2$, then $v_z\in L^{\infty}((0,T),L^q(\Omega))^2$.
\end{proposition}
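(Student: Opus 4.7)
The strategy is a direct $L^q$ energy estimate for $v_z$. Applying $\partial_z$ to the first equation in \eqref{eq:primeqhorvisc} and using $\partial_z p = 0$ together with $w_z = -\divh v$, one arrives formally at
\begin{align*}
\partial_t v_z + (v\cdot\nablah)v_z + w\,\partial_z v_z + (v_z\cdot\nablah)v + w_z v_z - \Deltah v_z = 0.
\end{align*}
Testing with $|v_z|^{q-2}v_z$ and integrating over $\Omega$, the two pure transport contributions combine to $\tfrac{1}{q}\int_\Omega u\cdot\nabla|v_z|^q\,\dx$ and vanish by virtue of $\divh v+w_z=0$ together with the boundary conditions $v=0$ on $\Gamma_l$ and $w=0$ on $\Gamma_u\cup\Gamma_b$. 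The diffusion term is coercive: setting $\phi:=|v_z|^{q/2}$ one obtains, with $c_q=4(q-1)/q^2$,
\begin{align*}
\frac{1}{q}\frac{d}{dt}\|\phi\|_2^2 + c_q\|\nablah\phi\|_2^2 \le C_q\int_\Omega |\nablah v|\,\phi^2\,\dx.
\end{align*}

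To treat the stretching term on the right I would exploit that $v_z(\cdot,z)=0$ on $\partial G$ for a.e.\ $z$, so $\phi(\cdot,z)\in H^1_0(G)$, and apply the two-dimensional Ladyzhenskaya inequality $\|\phi(z)\|_{L^4_{xy}}^2\le C\|\phi(z)\|_{L^2_{xy}}\|\nablah\phi(z)\|_{L^2_{xy}}$ on each horizontal slice. Hölder in $(x,y)$ followed by Hölder in $z$ with exponents $(\infty,2,2)$ yields
\begin{align*}
\int_\Omega |\nablah v|\,\phi^2\,\dx \le C\,\|\nablah v\|_{L^\infty_z L^2_{xy}}\,\|\phi\|_2\,\|\nablah\phi\|_2,
\end{align*}
and the one-dimensional Sobolev embedding $H^1((-h,h))\hookrightarrow L^\infty((-h,h))$ applied with values in $L^2(G)$ gives $\|\nablah v\|_{L^\infty_z L^2_{xy}}^2 \le C(\|\nablah v\|_2^2+\|\nablah v_z\|_2^2)$, a quantity which lies in $L^1(0,T)$ thanks to the $z$-weak regularity of $v$. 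Young's inequality then absorbs a fraction of $\|\nablah\phi\|_2^2$ into the left-hand side, leaving
\begin{align*}
\frac{d}{dt}\|\phi\|_2^2 \le C_q'\big(\|\nablah v\|_2^2+\|\nablah v_z\|_2^2\big)\|\phi\|_2^2,
\end{align*}
and Gronwall delivers
\begin{align*}
\|v_z(t)\|_q^q = \|\phi(t)\|_2^2 \le \|\partial_z v_0\|_q^q \exp\!\Big(C_q'\!\int_0^T\!\big(\|\nablah v\|_2^2+\|\nablah v_z\|_2^2\big)\ds\Big)
\end{align*}
uniformly in $t\in[0,T]$, which is the desired conclusion.

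The main obstacle is rigour: for a $z$-weak solution one only knows $v_z\in L^\infty_t L^2\cap L^2_t H^1_{xy}$, so the $z$-differentiated equation makes sense only formally and $|v_z|^{q-2}v_z$ is not a priori an admissible test function in the sense of Definition~\ref{def:zweaksolution}. I would resolve this by approximating $v_0$ by smooth data $v_0^{(n)}$ satisfying the hypotheses of Theorem~\ref{thm:IWP_loc}(b) and with $\partial_z v_0^{(n)}\to \partial_z v_0$ in $L^q(\Omega)$, obtaining corresponding strong solutions $v^{(n)}$ in which the manipulations above are legitimate, deriving the bound uniformly in $n$, and passing to the limit using the uniqueness of $z$-weak solutions from Theorem~\ref{thm:IWP_loc}(a) together with weak-$\ast$ lower semicontinuity of the $L^q$ norm.
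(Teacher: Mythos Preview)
Your proof is correct and follows essentially the same route as the paper: differentiate in $z$, test with $|v_z|^{q-2}v_z$, use the divergence-free cancellation for the transport terms, bound the remaining stretching terms via the trilinear estimate (the paper cites Lemma~\ref{lemma:lowreg}~a), which is precisely your slice-wise Ladyzhenskaya plus one-dimensional Sobolev embedding), absorb with Young, and close by Gronwall. The approximation argument you add for rigour is a welcome supplement that the paper leaves implicit.
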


\begin{proof}
We multiply the equation for $\partial_z v$,
\begin{align*}
 \partial_z v_t -\Deltah \partial_z v +\partial_zv\cdot\nablah v+v\cdot\nablah
 \partial_z v-\divh v \partial_z v + w\partial_{zz}v=0,
\end{align*}
by $|\partial_z v|^{q-2}\partial_z v$ and get
\begin{align*}
&\tfrac{1}{q} \tddt\|\partial_z v\|_{L^q}^q+(q-1)\||\partial_zv|^{(q-2)/2}
\nablah \partial_z v\|_{L^2}^2\\
& = -\left< \partial_zv\cdot\nablah v+ v\cdot\nablah \partial_z v-\divh v \partial_z v
+w\partial_{zz}v,|\partial_z v|^{q-2}\partial_z v\right>_{\Omega}\\
& = -\left< \partial_zv\cdot\nablah v,|\partial_z v|^{q-2}\partial_z v\right>_{\Omega}
+\left< \divh v \partial_z v,|\partial_z v|^{q-2}\partial_z v\right>_{\Omega}.
\end{align*}
Using Lemma \ref{lemma:lowreg}~a) with $f=|\partial_z v|^{q/2}=g$ and 
$h=\nablah v$, we obtain
\begin{align*}
|\left< \partial_zv\cdot\nabla v,|\partial_z v|^{q-2}\partial_z v\right>_{\Omega}|
& \leq c \norm{\nablah |\partial_z v|^{q/2}}_{L^2}\norm{|\partial_z v|^{q/2}}_{L^2} \|\nablah v\|_{H^1_zL^{2}_{xy}}\\
& \leq c \norm{|\partial_zv|^{(q-2)/2}\nablah \partial_z v}_{L^2}
\norm{\partial_z v}^{q/2}_{L^q}\| \nablah v\|_{H^1_zL^{2}_{xy}}\\
& \leq \frac12 \norm{|\partial_zv|^{(q-2)/2}\nablah \partial_z v}_{L^2}
+c \|\nablah v\|^2_{H^1_zL^{2}_{xy}} \norm{\partial_z v}^{q}_{L^q}
\end{align*}
and
\begin{align*}
|\left< \divh v \partial_z v,|\partial_z v|^{q-2}\partial_z v\right>_{\Omega}|
 \leq \frac12 \norm{|\partial_zv|^{(q-2)/2}\nablah \partial_z v}_{L^2}
 +c \|\nablah v\|^2_{H^1_zL^{2}_{xy}} \norm{\partial_z v}^{q}_{L^q}.
\end{align*}
Thus,
\begin{align*}
&\tfrac{1}{q} \tddt \|\partial_z v\|_{L^q}^q+(q-2)\||\partial_zv|^{(q-2)/2}
\nablah \partial_z v\|_{L^2}^2\leq c \|\nablah v\|^2_{H^1_zL^{2}_{xy}} \norm{\partial_z v}^{q}_{L^q}
\end{align*}
and this implies
\begin{align*}
\|\partial_z v(t)\|_{L^q}^q \leq \|\partial_z v_0\|_{L^q}^q \; e^{\|\nablah v\|^2_{L^2((0,T),H^1_zL^{2}_{xy})}},
\end{align*}
so $\partial_z v\in L^{\infty}((0,T),L^q(\Omega))$.
\end{proof}

\section{Local solutions with force}\label{sec:loc}
Theorem~\ref{thm:IWP_loc} $(a)$ and $(b)$ correspond to Proposition~\ref{prop:zweaksolutions} and~\ref{prop:locstrong}, respectively.
	
\subsection{Local $z$-weak solutions}\label{subsec:zweak}
We work in the spaces
\begin{align*}
 H:=\{v\in L^2(\Omega)^2| \partial_z v\in L^2(\Omega)^2\}\quad \hbox{and} \quad H_{\overline{\sigma}}:= H \cap \lso(\Omega)
\end{align*} 
equipped with the scalar product $\left< u,v\right>_H:=\left< u,v
\right>_{\Omega}+\left< \partial_z u,\partial_z v\right>_{\Omega}$ and
\begin{align*}
 V:=\{v\in H| \partial_x v, \partial_y v\in H\}\cap H^1_{0,l}(\Omega)^2
\quad \hbox{and} \quad V_{\overline{\sigma}}:=V\cap \lso(\Omega) 
\end{align*}
with the scalar product $\left< u,v\right>_V:=\left< u,v\right>_{H}+\left< 
\nablah u,\nablah v\right>_{H}$. Note that $H=H^1((-h,h),L^2(G))^2$ and $V=H^1((-h,h),H^1_0(G))^2$. By $V'$ we denote the space 
\begin{align*}
 V'=H^1((-h,h),H^{-1}(G))^2.
\end{align*}
We also denote the dual pairing in $V\times V'$  by $\left< \cdot ,\cdot\right>_H$ 
to keep the notation simple.

\begin{proposition}[Existence and uniqueness of local $z$-weak solutions]\label{prop:zweaksolutions}
Let $v_0\in H_{\overline{\sigma}}$ and $f\in L^2((0,T),V')$, then there exists a $T'\in (0,T]$ such that there is a unique $z$-weak solution of the primitive equations on $(0,T')$ with $v\in C^0([0,T'],L^2(\Omega))$. If $||v_0||_H$ and $||f||_{L^2((0,T),V')}$ are sufficiently small, then $T'=T$.
\end{proposition}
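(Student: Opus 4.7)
My plan is to construct the $z$-weak solution via a Galerkin approximation inside $V_{\overline{\sigma}}$, and to close uniform bounds in exactly the regularity class prescribed by Definition~\ref{def:zweaksolution} by combining a basic energy identity with an energy identity for $\partial_z v$. Concretely, pick a standard Galerkin basis of $V_{\overline{\sigma}}$ orthonormal in $H_{\overline{\sigma}}$ (e.g.\ eigenfunctions of a self-adjoint operator with form-domain $V_{\overline{\sigma}}$ such as $-\Deltah-\partial_{zz}$ on $\lso(\Omega)$ with the boundary conditions \eqref{eq:bc}), project the system onto the span of the first $n$ basis vectors using the hydrostatic Helmholtz projection $\PP$ (which eliminates the pressure), and solve the resulting finite-dimensional ODE for $v^n$ on its maximal interval of existence. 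The whole task is then to make this interval uniform in $n$.

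The two a~priori estimates are as follows. Testing the projected equation with $v^n$ and using the divergence-free structure, the cancellation $\langle v\cdot\nablah v + w\partial_z v,v\rangle = 0$, and the lateral Dirichlet boundary via Poincar\'{e}, gives the weak-solution bound of condition~(ii) in Definition~\ref{def:weaksolution}. Testing with $-\partial_{zz}v^n$ (equivalently, differentiating the equation in $z$ and testing with $v^n_z$) is the crucial step: since $\partial_z p\equiv 0$ the pressure drops out, and the cancellation
\[
\langle v\cdot\nablah v_z,v_z\rangle + \langle w\,\partial_{zz}v,v_z\rangle = 0,
\]
which follows from $\partial_zw=-\divh v$, $w|_{z=\pm h}=0$ and $v|_{\Gamma_l}=0$, eliminates the worst nonlinear contributions. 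One is left with
\[
\tfrac{1}{2}\tddt\|v_z^n\|_2^2 + \|\nablah v_z^n\|_2^2 = -\langle (\partial_z v)\cdot\nablah v,v_z\rangle + \langle (\divh v)\,v_z,v_z\rangle + \langle\partial_z f,v_z\rangle,
\]
and both nonlinear terms on the right are estimated via the anisotropic multiplicative inequality of Lemma~\ref{lemma:lowreg}, exactly as in the proof of Proposition~\ref{lem:testfunctions}, so that a fraction of $\|\nablah v_z\|_2^2$ can be absorbed on the left while the remainder is bounded by $c\,\|\nablah v\|^2_{H^1_zL^2_{xy}}\|v_z\|_2^2$ plus a controlled force contribution. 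Coupled with the basic bound, this yields a Riccati-type differential inequality $\tddt y \le c(1+y)^{\gamma}+g(t)$ for $y=\|v_z^n\|_2^2$ with $g\in L^1(0,T)$, giving a uniform existence interval $(0,T')$ and $T'=T$ once the data are sufficiently small.

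Uniform bounds in $L^{\infty}(0,T';H_{\overline{\sigma}})\cap L^2(0,T';V_{\overline{\sigma}})$ together with $\partial_t v^n\in L^2(0,T';V')$ read off the equation allow Aubin--Lions compactness to give strong convergence of a subsequence in $L^2(0,T';\lso(\Omega))$, enough to pass to the limit in the weak formulation and identify every nonlinear limit; the continuity $v\in C^0([0,T'],L^2(\Omega))$ then follows from $v\in L^2(0,T';V)$ combined with $\partial_t v\in L^2(0,T';V')$. Uniqueness is by a Gronwall argument: the difference $u=v_1-v_2$ of two $z$-weak solutions with the same data is an admissible test function against itself by Proposition~\ref{lem:testfunctions}, and the same anisotropic tri\-linear estimate produces $\tddt\|u\|_2^2+\|\nablah u\|_2^2 \le g(t)\|u\|_2^2$ with $g\in L^1(0,T')$, forcing $u\equiv 0$.

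The main obstacle will be the second a~priori estimate. Because there is no vertical dissipation, $\partial_z v$ must be controlled purely from horizontal smoothing, and the only parabolic gain available is the half-power of $\|\nablah v_z\|_2$ that can be absorbed on the left of the $v_z$-energy identity. Balancing the exponents in Lemma~\ref{lemma:lowreg} so that the remainder depends only on $\|v_z\|_2$ and on quantities already controlled by the first bound is the technical heart of the argument; it is precisely this balance that determines both the length of the local existence interval and the smallness threshold under which $T'=T$.
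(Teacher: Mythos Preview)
Your approach is essentially the paper's: Galerkin scheme, an $L^2$-energy estimate, a $\partial_z v$-energy estimate controlled by Lemma~\ref{lemma:lowreg}~a), a nonlinear Gr\"onwall argument giving the local time $T'$ (with $T'=T$ for small data), compactness, and uniqueness via Proposition~\ref{lem:testfunctions}. Two points deserve care, though.

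First, testing the Galerkin system with $-\partial_{zz}v^n$ is only legitimate if $-\partial_{zz}$ leaves the finite-dimensional Galerkin space invariant. A generic eigenbasis of $-\Deltah-\partial_{zz}$ ordered by total eigenvalue need not have this property. The paper builds the basis explicitly as products $\varphi_m(x,y)\cos\!\big(\tfrac{k\pi}{2h}(z+h)\big)$ (with Stokes eigenfunctions for $k=0$), so $-\partial_{zz}$ acts diagonally on $H_{\overline{\sigma},n}$; equivalently, the Galerkin system is written in the $H$-inner product and then unwound via $\langle g,\Phi_{m,k}\rangle_H=(1+\tfrac{k^2\pi^2}{4h^2})\langle g,\Phi_{m,k}\rangle_\Omega$. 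You should make this structure explicit.

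Second, your route to $v\in C^0([0,T'],L^2(\Omega))$ via ``$v\in L^2(0,T';V)$ and $\partial_t v\in L^2(0,T';V')$'' is shaky here: in the paper's notation $V'=H^1_zH^{-1}_{xy}$ is \emph{not} the dual of $V$, and even reading $V'$ as the genuine dual, the bound on $w\partial_z v$ coming from Lemma~\ref{lemma:lowreg}~a) gives only $\partial_t v\in L^{4/3}(0,T';(V)')$, not $L^2$, so the standard Lions--Magenes lemma does not apply directly. The paper avoids this by showing instead that the Galerkin approximations form a Cauchy sequence in $C^0([0,T'],L^2(\Omega))$, estimating $\tddt\|v_n-v_l\|_2^2$ with the same trilinear bounds used for uniqueness; this is the safer argument and costs nothing extra since the ingredients are already in hand.
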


\begin{proof}
We subdivide the proof of the existence and uniqueness into several steps.

\textit{Step 1 (Galerkin approximation).}
To define a suitable basis for a Galerkin scheme, one can take advantage of \eqref{eq:lso_split2}. To this end,
let $(\varphi_m)_m \subset C^{\infty}(\overline{G})^2\cap H^1_0(G)^2$ be an
orthonormal basis of eigenfunctions to the eigenvalues $(\mu_m)_m$ of the Dirichlet Laplacian in $L^2(G)^2$, and $(\tilde{\varphi}_m)_m \subset 
C^{\infty}(\overline{G})^2\cap H^1_0(G)^2\cap \ls(G)$ an orthonormal basis of eigenfunctions to the eigenvalues $(\tilde{\mu}_m)_m$ of the Stokes operator in $\ls (G)$. 
Moreover, $\cos\left(\frac{k\pi}{2h}(z+h)\right)$ for $k\in \N_0$ defines a basis of eigenfunctions to the Neumann Laplacian on $L^2(-h,h)$ and by the first representation theorem even a basis on $H^1(-h,h)$. 

Hence, we define for $m\in \N, k\in\N_0$ the functions $\Phi_{m,k}\in C^{\infty}(\overline{\Omega})$ by
\begin{align} \label{eq:Phi}
 \Phi_{m,k}(x,y,z)&:= \frac{1}{h}\varphi_m(x,y)\cos\left(\frac{k\pi}{2h}(z+h)\right)\quad \hbox{for} \quad k>0,\\ \label{eq:Phi0}
 \Phi_{m,0}(x,y,z)&:= \frac{1}{2h}\tilde{\varphi}_m(x,y).
\end{align}
Then $\operatorname{span} \{\Phi_{m,k} | m\in\N,k\in\N_0\}$ is dense 
in $H_{\overline{\sigma}}$, in particular $\divh\overline{\Phi}_{m,k}=0$, because for $k>0$ we have 
already  $\overline{\Phi}_{m,k}=0$ and $\divh\overline{\Phi}_{m,0}=
\divh \tilde{\varphi}_m=0$. 
We set 
\begin{align}\label{eq:HnPn}
H_{\overline{\sigma},n}:=\operatorname{span}\{\Phi_{m,k} | m,k\leq n \} \quad \hbox{and} \quad P_n: H_{\overline{\sigma}}\to H_{\overline{\sigma},n}
\end{align} 
to be the orthogonal projection onto it.
We project the primitive equations onto the finite dimensional space $H_{\overline{\sigma},n}$ and 
we are looking for a solution $$v_n=(v_{1,n},v_{2,n})\colon[0,T] \to H_{\overline{\sigma},n}$$ of the 
system of ordinary differential equations
\begin{multline}\label{eq:galerkin2}
\tddt \left< v_n,\Phi_{m,k}\right>_{H} + \left<v_n\cdot\nablah v_n, 
 \Phi_{m,k}\right>_{H}  +  \left< w_n \partial_z v_n,\Phi_{m,k}
 \right>_{H}-  \left< \Deltah v_n,\Phi_{m,k}\right>_{H} \\ = \left<f,\Phi_{m,k}\right>_{H}
\end{multline}
for $m,k \leq n$, where we already used $\left<\nablah p,\Phi_{m,k}
\right>_{\Omega}=0$, with initial condition $v_n(0)=P_n v_0$ and $w_n(t,x,y,z)
=-\int_{-h}^z\divh v_n(t,x,y,s)\d\! s$. Note that the properties of $\Phi_{m,k}$ 
imply $\divh \overline{v}_n=0$ and thus we have $w_n(z=\pm h)=0$. Now, we can represent 
\begin{align*}
v_{n}(t)=\sum_{m,k\leq n} g_{n}^{(mk)}(t)\Phi_{m,k} \quad \hbox{for some} \quad g_{n}^{(mk)}:[0,T]
\to\R.
\end{align*}
The existence of a solution $v_n\in H^1((0,T),H_{\overline{\sigma},n})$ follows from classical ODE theory.

\textit{Step 2 ($L^2$-estimate).}
Next, we prove an estimate for $v_n$ in $\lso(\Omega)$. Integrating by parts, for $k>0$ it holds for any function 
$g\in V'$ that
\begin{align*}
 \left<g,\Phi_{m,k}\right>_H 
 =\left<g,\Phi_{m,k}\right>_{\Omega}+ \left< g, -\partial_{zz} \Phi_{m,k}\right>_{\Omega}
 =\left(1+ \frac{k^2\pi^2}{4h^2}\right)\left<g,\Phi_{m,k}\right>_{\Omega},
\end{align*}
and for $k=0$ the corresponding equality holds, because $\partial_z \Phi_{m,0}=0$. Thus \eqref{eq:galerkin2} yields
\begin{multline}\label{eq:galerkin21}
 \left< \partial_t v_n,\Phi_{m,k}\right>_{\Omega} + \left<v_n\cdot\nablah v_n, 
 \Phi_{m,k}\right>_{\Omega}  +  \left< w_n \partial_z v_n,\Phi_{m,k}
 \right>_{\Omega}-  \left< \Deltah v_n,\Phi_{m,k}\right>_{\Omega} \\ = \left<f,\Phi_{m,k}\right>_{\Omega}.
\end{multline}
We multiply this equation by $g^{(mk)}_{n}$ and sum over $m,k\leq n$. 
It then follows that
\begin{align*}
 \tddt \|v_n\|_{L^2(\Omega)}^2+2 \|\nablah v_n\|^2_{L^2(\Omega)}
 & =2\left<f,v_n\right>_{\Omega}-2\left<v_n\cdot\nablah v_n, 
 v_n\right>_{\Omega}-2\left< w_n \partial_z v_n,v_n \right>_{\Omega}\\
 & =2\left<f,v_n\right>_{\Omega}\\
 & \leq \frac{1}{\varepsilon}\|f\|_{L^2((-h,h),H^{-1}(G))}^2+\varepsilon\|v_n\|^2_{L^2((-h,h),H^{1}(G))}, 
\end{align*}
for $\varepsilon>0$, where we used $w_n(\pm h)=0$ when integrating by parts in the vertical direction to show that $\left<v_n\cdot\nablah v_n, 
v_n\right>_{\Omega}+\left< w_n \partial_z v_n,v_n \right>_{\Omega}=0$. 
Thus, for $\varepsilon $ small enough, using the Poincar\'e inequality of Lemma~\ref{lem:poincare}, there exists a constant $C>0$ independent of $v_n$ such that 
\begin{multline} \label{eq:L2_Energy}
 \|v_n\|_{L^{\infty}((0,T),L^2(\Omega))}+ \|\nablah v_n\|_{L^2((0,T),L^2(\Omega))} \\ 
 \leq C\|v_0\|_{L^2(\Omega)}+C\|f\|_{L^2((0,T),L^2((-h,h),H^{-1}(G)))}.
\end{multline}

\textit{Step 3 ($H$-estimate).}
To derive now an estimate for $v_n$ in $H$ we multiply \eqref{eq:galerkin2} 
by $g^{(mk)}_{n}$ and sum over $m,k\leq n$. This gives
\begin{align*}
& \tfrac12  \tddt \|v_n\|^2_{L^2(\Omega)}+\tfrac12  \tddt\|\partial_z v_n\|^2_{L^2(\Omega)}
+\|\nablah v_n\|^2_{L^2(\Omega)}+\|\partial_z \nablah v_n\|^2_{L^2(\Omega)}\\
& \qquad =\left< f,v_n \right>_{\Omega}+\left< \partial_z f,\partial_z v_n \right>_{\Omega}
-\left<\partial_z(v_n\cdot\nablah v_n),\partial_z v_n\right>_{\Omega}
 -\left<\partial_z(w_n\partial_z v_n),\partial_z v_n\right>_{\Omega} \\
& \qquad =\left< f,v_n \right>_{\Omega}+\left< \partial_z f,\partial_z v_n \right>_{\Omega}
-\left<(\partial_z v_n)\cdot\nablah v_n,\partial_z v_n\right>_{\Omega}
 +\left<\divh v_n\partial_z v_n,\partial_z v_n\right>_{\Omega}.
\end{align*}
Here one has used the cancellation property for the non-linear term with respect to $\langle\cdot,\cdot\rangle_\Omega$. Note that with respect to $\langle\cdot,\cdot\rangle_H$ one does not have such  cancellation in general.
Now, Lemma~\ref{lemma:lowreg}~a) yields with $f=g=\partial_z v_n$ and $h=\nablah v_n$ that
\begin{align*}
& |\left<(\partial_z v_n)\cdot\nablah v_n,\partial_z v_n\right>_{\Omega}| \\
& \quad\leq c \|\partial_z \nablah v_n\|^{1/2}_{L^{2}(\Omega)}\|\nablah v_n\|^{1/2}_{L^{2}(\Omega)}
\|\partial_z \nablah v_n\|_{L^2(\Omega)}\|\partial_z v_n\|_{L^2(\Omega)}\\
& \qquad +c\|\partial_z \nablah v_n\|_{L^2(\Omega)}\|\nablah v_n\|_{L^{2}(\Omega)}
\|\partial_z v_n\|_{L^2(\Omega)}\\
& \quad\leq \varepsilon \|\partial_z \nablah v_n\|^2_{L^2(\Omega)}
+c\varepsilon^{-1}\|\nablah v_n\|^2_{L^{2}(\Omega)} \left(\|\partial_z v_n\|_{L^2(\Omega)}^2
+\|\partial_z v_n\|_{L^2(\Omega)}^4\right)
\end{align*}
for any $\varepsilon>0$. An analogous estimate holds for the term $|\left<\divh v_n\partial_z v_n,
\partial_z v_n\right>_{\Omega}|$. So, we obtain by choosing $\varepsilon=\frac{1}{4}$
\begin{align}\label{eq:H_energydt}
\tddt\|v_n\|^2_{H}+ \|\nablah v_n\|^2_{H} \leq \|f\|^2_{V'}+\tfrac{1}{2}\|v_n\|_{V}^2
+c \|\nablah v_n\|^2_{L^{2}(\Omega)} \left(\|v_n\|_{H}^2+\|v_n\|_{H}^4\right)
\end{align}
and integrating with respect to time gives for $t>0$
\begin{multline*}
\|v_n(t)\|^2_{H}+\frac12 \int_0^t\|\nablah v_n(s)\|^2_{H} \d\! s \leq\|v_0\|^2_{H}
+\|f\|^2_{L^2((0,t),V')}+\frac{1}{2}\int_0^t\|v_n(s)\|^2_{H}\d\! s\\
+c \int_0^t\|\nablah v_n(s)\|^2_{L^{2}(\Omega)} \left(\|v_n(s)\|_{H}^2+\|v_n(s)\|_{H}^4\right)\d\! s.
\end{multline*}
Using Poincar\'{e}'s inequality, see Lemma~\ref{lem:poincare}, leads to a situation where a non-linear version of Gr\"onwall's Lemma -- recapped here in Lemma~\ref{lem:nonlinGron} -- is applicable, i.e.,  
\begin{align}\label{eq:H_energy}
\|v_n(t)\|^2_{H}  \leq\|v_0\|^2_{H}
+\|f\|^2_{L^2((0,t),V')}
+c \int_0^t\Psi(s) \omega(\|v_n(s)\|_{H}^2) \d\! s
\end{align}
where $\omega(s)=1+s+s^2$, $\Psi(s):=\|\nablah v_n(s)\|^2_{L^{2}(\Omega)}$, and 
\begin{align*}
\Phi(u) = \int_{0}^u \frac{\d\! s}{\omega(s)} = \frac{2}{\sqrt{3}} \arctan \left(\frac{1+2u}{\sqrt{3}} \right) + c_0,
\end{align*}
where $c_0$ is such that $\Phi(0)=0$.
Due to the boundedness of $\int_0^t\|\nablah v_n(s)\|^2_{L^{2}(\Omega)}\ds$ by the $L^2$-estimate in Step 2, Lemma~\ref{lem:nonlinGron} implies 
\begin{align*}
\|v_n(t)\|^2_{H}  \leq \Phi^{-1}\left( \Phi(\|v_0\|^2_{H}
+\|f\|^2_{L^2((0,t),V')}) +  \int_0^t\|\nablah v_n(s)\|^2_{L^{2}(\Omega)}\d\! s  \right). 
\end{align*}
This is well-defined provided that $ \Phi(\|v_0\|^2_{H}
+\|f\|^2_{L^2((0,t),V')}) +  \int_0^{t}\|\nablah v_n(s)\|^2_{L^{2}(\Omega)}\d\! s$ lies in the range of $\Phi$ which can be assured for small times $0<t\leq T'$, where $T'\in (0,T]$ is sufficiently small.
Using the monotonicity of $\Phi$ which implys the one of $\Phi^{-1}$ and the energy inequality \eqref{eq:L2_Energy} one even obtains that  
\begin{align*}
\|v_n(t)\|^2_{H}  &\leq \Phi^{-1}\left( \Phi(\|v_0\|^2_{H}
+\|f\|^2_{L^2((0,t),V')}) +  \|v_0\|^2_{L^2(\Omega)}
+c\|f\|^2_{L^2((0,t),V')}  \right). 
\end{align*}
Hence for small times $T'\in (0,T]$ or for data $\|v_0\|^2_{L^2(\Omega)}+\|f\|^2_{L^2((0,t),V')}$ being sufficiently small when $T'=T$, one obtains using the continuity of $\Phi^{-1}$ that for some $c>0$
\begin{align}\label{eq:Henergy_small}
\|v_n(t)\|^2_{H}+\int_0^t\|\nablah v_n(s)\|^2_{H} \d\! s \leq c(\|v_0\|^2_{H}+\|f\|^2_{L^2((0,t),V')}), \quad t\in (0,T'].
\end{align}

\textit{Step 4 (Convergence).}
On this interval $[0,T']$ we can deduce 
the weak convergence of a subsequence of $(v_n)_n$ in $L^2((0,T),H_{\overline{\sigma}})$ (which we do not rename) to some 
limit $v\in L^2((0,T),H_{\overline{\sigma}})$. The energy estimate \eqref{eq:Henergy_small} for the sequence gives that $\|v\|^2_{L^{\infty}((0,T),H)}$ 
and $\| \nablah v\|_{L^2((0,T),H)}$ remain bounded, and hence $v$ is in the regularity class of weak and $z$-weak solutions. 

To show that the limit is in fact a weak solution, one takes into account that especially the full gradient of $v_n$ is uniformly bounded in $L^2((0,T),L^2(\Omega ))$ and 
from the compact embedding in the Rellich-Kondrachov theorem the strong convergence of $(v_n)_n$ in $L^2((0,T),L^2(\Omega))$ follows. This 
implies that in $L^1((0,T),L^1(\Omega))$ 
\begin{align*}
v_n\cdot \nablah v_n\rightharpoonup v\cdot \nablah v,\quad w_n v_n \rightharpoonup w v \quad \hbox{and}\quad \partial_z w_n v_n \rightharpoonup \partial_z w v.
\end{align*}
Let now $\varphi_n\in C^{1}([0,T],H_n\cap V)$ be 
of the form $\varphi_n(t)=\sum_{m,k \leq n} h_{m,k}(t)\Phi_{m,k}$ with
$h_{m,k}\in C^{1}([0,T],\R)$. From $w(z=\pm h)$ we conclude
\begin{align*}
\left< w_n \partial_z v_n,  \varphi_n \right>_{\Omega}=-\left< \partial_z w_n v_n,
\varphi_n \right>_{\Omega}-\left< w_n v_n,   \partial_z \varphi_n \right>_{\Omega},
\end{align*}
and using \eqref{eq:galerkin21} we get
\begin{multline*}
\int_0^T -\left<  v_n , \partial_t \varphi_n \right>_{\Omega}  
+\left<  v_n\cdot\nablah  v_n, \varphi_n \right>_{\Omega} 
-\left< \partial_z w_n v_n,  \varphi_n \right>_{\Omega}
-\left< w_n v_n,   \partial_z \varphi_n \right>_{\Omega}\\
+\left< \nablah  v_n, \nablah  \varphi_n \right>_{\Omega}\d\! t 
+\left< v_n(T), \varphi_n(T) \right>_{\Omega}
-\left< v_n(0), \varphi_n(0) \right>_{\Omega}
= \int_0^T  \left< f, \varphi_n \right>_{\Omega}  \d\! t
\end{multline*}
and passing to the limit $n\to \infty$ gives
\begin{multline*}
 -\left<  v , \partial_t \varphi \right>_{\Omega,T}  
+\left<  v\cdot\nablah  v, \varphi \right>_{\Omega,T} 
-\left< \partial_z w v,  \varphi \right>_{\Omega,T}
-\left< w v,   \partial_z \varphi \right>_{\Omega,T}
 \\ +\left< \nablah  v, \nablah  \varphi \right>_{\Omega,T}
 +\left< v(T), \varphi(T) \right>_{\Omega}
 -\left< v(0), \varphi(0) \right>_{\Omega}
=\left< f, \varphi \right>_{\Omega,T} .
\end{multline*}
Showing that $v(0)=v_0$ and $v(T)$ are well-defined follows from the next step which only uses the Galerkin approximation and the convergence.

\textit{Step 5 (Continuity in time).} 
For $l > n$ we extend $v_n\in H^1((0,T),H_{\overline{\sigma},n})$ to $H^1((0,T),H_{\overline{\sigma},l})\subset C^0([0,T],L^2(\Omega))$ by setting $g_n^{(mk)}=0$ if $m\geq l$ or $k\geq l$. For $n<l\in\N$ we set $u:=v_n-v_l$ and $w_u:=w_n-w_l$, where $v_n,v_l$ are elements of the convergent subsequence. From \eqref{eq:galerkin21} it follows that
\begin{multline*}
 \left< \partial_t u,\Phi_{m,k}\right>_{H} + \left<v_n\cdot\nablah v_n-v_l\cdot\nablah v_l, 
 \Phi_{m,k}\right>_{H}  +  \left< w_n \partial_z v_n-w_l \partial_z v_l,\Phi_{m,k}
 \right>_{H}\\
 -  \left< \Deltah u,\Phi_{m,k}\right>_{H}  = \left<f,\Phi_{m,k}\right>_{H}.
\end{multline*}
 Hence,
\begin{align*}
&\tfrac12 \tddt \|u\|_{L^2(\Omega)}^2+\|\nablah u\|_{L^2(\Omega)}^2\\
&\qquad\quad = \left<v_{l}\cdot\nablah  v_{l}- v_{n}\cdot\nablah  v_{n} , u\right>_{\Omega}
+\left< w_{l} \partial_z v_{l} - w_{n} \partial_z v_{n} , u\right>_{\Omega}+\left<f, u\right>_{\Omega}\\
&\qquad\quad =\left< (w_{l}-w_{n}) \partial_z v_{n}, u\right>_{\Omega} 
+\left< w_{l} \partial_z(v_{l}- v_{n}) , u\right>_{\Omega}\\
&\qquad\qquad+\left<(v_{l}-v_{n})\cdot\nablah  v_{n} , u\right>_{\Omega}
+\left<v_{l}\cdot\nablah (v_{l}- v_{n}), u\right>_{\Omega}+\left<f, u\right>_{\Omega}\\
&\qquad\quad = \left< w_{u} \partial_z v_{n}, u\right>_{\Omega}
+\left<u\cdot\nablah  v_{n} , u\right>_{\Omega}+\left<f, u\right>_{\Omega}.
\end{align*}
As in the proof of Proposition \ref{lem:testfunctions} it follows from Lemma \ref{lemma:lowreg}~a) 
with $f=u$, $g=\partial_z v_{n}$ and $h=w_{u}$ that
\begin{align*}
|\left< w_{u} \partial_z v_{n}, u\right>_{\Omega}|
& \leq c \norm{\nablah u}^{1/2}_{L^2(\Omega)}\norm{u}^{1/2}_{L^2(\Omega)}
\norm{\nablah \partial_z v_{n}}^{1/2}_{L^2(\Omega)}
\norm{\partial_z v_{n}}^{1/2}_{L^2(\Omega)}\\
& \qquad\qquad\qquad\qquad\qquad \cdot \left(\|\partial_z w_{u}\|^{1/2}_{L^{2}(\Omega)}
\|w_{u}\|^{1/2}_{L^{2}(\Omega)}+\|w_{u}\|_{L^{2}(\Omega)}\right)\\
& \leq c \norm{\nablah u}^{1/2}_{L^2(\Omega)}\norm{u}^{1/2}_{L^2(\Omega)}
\norm{\nablah \partial_z v_{n}}^{1/2}_{L^2(\Omega)}
\norm{\partial_z v_{n}}^{1/2}_{L^2(\Omega)}\|\nablah u\|_{L^{2}(\Omega)}\\
&\leq \frac14  \norm{\nablah u}^{2}_{L^2(\Omega)}
+c\norm{\nablah v_{n}}^{2}_{H}\norm{v_{n}}^{2}_{H}\norm{u}^{2}_{L^2(\Omega)}
\end{align*}
and similarly from Lemma \ref{lemma:lowreg}~a) with $f=g=u$ and $h=\nabla_Hv_{n}$ that
\begin{align*}
|\left<u\cdot\nablah  v_{n} , u\right>_{\Omega}|
& \leq c \norm{\nablah u}_{L^2(\Omega)}\norm{u}_{L^2(\Omega)}\\
& \qquad\qquad\qquad \cdot
\left(\|\partial_z \nablah  v_{n} \|^{1/2}_{L^{2}(\Omega)}
\|\nablah v_{n} \|^{1/2}_{L^{2}(\Omega)}+\|\nablah  v_{n} \|_{L^{2}(\Omega)}\right)\\
& \leq \frac14\norm{\nablah u}^{2}_{L^2(\Omega)}
+c\norm{\nablah v_{n}}^{2}_{H}\norm{u}^2_{L^2(\Omega)}.
\end{align*}
Altogether, we have
\begin{multline*}
\|u(t)\|_{L^2(\Omega)}^2+\int_0^t\|\nablah u(s)\|_{L^2(\Omega)}^2\d\! s 
\leq \int_0^t\left<f(s), u(s)\right>_{\Omega}\ds \\
+ c\int_0^t\left(1+\norm{\nablah v_{n}(s)}^{2}_{H}+\norm{\nablah v_{n}(s)}^{2}_{H} \norm{v_{n}(s)}^{2}_{H}\right) \|u(s)\|^2_{L^2(\Omega)}\d\! s,
\end{multline*}
and because of 
\begin{multline*}
\int_0^t 1+\norm{\nablah v_{n}(s)}^{2}_{H}+\norm{\nablah v_{n}(s)}^{2}_{H} \norm{v_{n}(s)}^{2}_{H}\d\! s\\
\leq C(T+\norm{\nablah v_{n}}^{2}_{L^{2}((0,T),H)}+\norm{v_{n}}^{2}_{L^{\infty}((0,T),H)}\norm{\nablah v_{n}}^{2}_{L^{2}((0,T),H)}) < \infty
\end{multline*}
uniformly in $n$, we get
\begin{align*}
\|u(t)\|_{L^2(\Omega)}^2\leq c\left(\|u(0)\|_{L^2(\Omega)}^2+ \int_0^T\left|\left<f(s), u(s)\right>_{\Omega}\right|\ds \right).
\end{align*}
The weak-$\ast$-convergence of $v_n \to v$ in $L^2((0,T),V)$ yields $\int_0^T\left|\left<f(s), u(s)\right>_{\Omega}\right|\ds\to 0$ ($n,l\to\infty$) and with $\|u(0)\|_{L^2(\Omega)}\to 0$ ($n,l\to\infty$) follows 
\begin{align*}
\sup_{t\in[0,T]}\|u(t)\|_{L^2(\Omega)}^2\leq c\left(\|u(0)\|_{L^2(\Omega)}^2+ \int_0^T\left|\left<f(s), u(s)\right>_{\Omega}\right|\ds \right) \to 0
\end{align*}
for $n,l\to \infty$. Thus $v\in C^0([0,T],L^2(\Omega))$.

\textit{Step 6 (Uniqueness).} In this step the estimates on the non-linear terms are similar to the above. Let $v_1,v_2$ be two $z$-weak solutions to the same initial datum $v_0\in H_{\overline{\sigma}}$.
Set $u:=v_{1}-v_{2}$ 
and $w_{u}:=w_{1}-w_{2}$ for $w_{1}=w(v_{1})$ and $w_{2}=w(v_{2})$. Then we have $u(t=0)= 0$ in $L^2(\Omega )$ and 
\begin{align*}
\partial_t u -\Deltah  u
+ \PP (v_{1}\cdot\nablah  v_{1}- v_{2}\cdot\nablah  v_{2}
+ w_{1} \partial_z v_{1} -  w_{2} \partial_z v_{2})
=0.
\end{align*}
We have $u \in L^{\infty}((0,T),\lso(\Omega )) \cap L^2((0,T),L^2((-h,h),H^1_0(G))^2)$. Hence, 
Proposition~\ref{lem:testfunctions} yields that we can 
test the above equation with $u$, and similar to Step~5
\begin{align*}
\tfrac12 \tddt \|u\|_{L^2(\Omega)}^2+\|\nablah u\|_{L^2(\Omega)}^2
= \left< w_{u} \partial_z v_{1}, u\right>_{\Omega}
+\left<u\cdot\nablah  v_{1} , u\right>_{\Omega},
\end{align*}
where
\begin{align*}
|\left< w_{u} \partial_z v_{1}, u\right>_{\Omega}|
&\leq \frac14  \norm{\nablah u}^{2}_{L^2(\Omega)}
+\norm{\nablah v_{1}}^{2}_{H}\norm{v_{1}}^{2}_{H}\norm{u}^{2}_{L^2(\Omega)} \hbox{ and } \\
|\left<u\cdot\nablah  v_{1} , u\right>_{\Omega}|
& \leq \frac14\norm{\nablah u}^{2}_{L^2(\Omega)}
+c\norm{\nablah v_{1}}^{2}_{H}\norm{u}^2_{L^2(\Omega)}.
\end{align*}
It follows
\begin{multline}\label{eq:u_estimate}
\|u(t)\|_{L^2(\Omega)}^2+\int_0^t\|\nablah u(s)\|_{L^2(\Omega)}^2\d\! s\\
\leq  \int_0^t\left(  \norm{\nablah v_{1}(s)}^{2}_{H} \norm{v_{1}(s)}^{2}_{H}+ \norm{\nablah v_{1}(s)}^{2}_{H} \norm{v_{1}(s)}^{2}_{H}\right) \|u(s)\|^2_{L^2(\Omega)}\d\! s,
\end{multline}
and we can apply Gr\"onwall's inequality to obtain $u=0$.
\end{proof}

\subsection{Local strong solutions}\label{subsec:locstrong}




\begin{proposition}[Local strong well-posedness]\label{prop:locstrong}
Let $v_0\in H_{0,l}^1(\Omega)^2\cap \lso(\Omega)$ and $f\in L^2((0,T),H)$. 
Then the local $z$-weak solution on $(0,T')$ for $T'\in (0,T]$ given by Proposition~\ref{prop:zweaksolutions} has the additional regularity
\begin{align*}
 \nablah v \in L^{\infty}((0,T'),L^2(\Omega)), \quad 
 \Deltah v, \partial_t v \in L^2((0,T'),L^2(\Omega)).
\end{align*}
\end{proposition}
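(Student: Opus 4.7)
Since Proposition~\ref{prop:zweaksolutions} already furnishes $v$ as a weak-$\ast$ limit of Galerkin approximants $v_n \in H^1((0,T'), H_{\overline{\sigma},n})$, my plan is to extract an additional \emph{a priori} bound at the Galerkin level and then pass to the same limit. Concretely, I would test the Galerkin equation \eqref{eq:galerkin2} with $\partial_t v_n$, which lies in $H_{\overline{\sigma},n}$ since it is a finite linear combination of the basis functions $\Phi_{m,k}$. Using the identity $-\langle \Deltah v_n,\partial_t v_n\rangle_H = \tfrac{1}{2}\tddt\|\nablah v_n\|_H^2$, valid by the horizontal Dirichlet condition together with one integration by parts in $\partial_z$ from the $H$-product and one in $\nablah$, this yields
\[
\|\partial_t v_n\|_H^2 + \tfrac{1}{2}\tddt\|\nablah v_n\|_H^2 = \langle f,\partial_t v_n\rangle_H - \langle v_n\cdot\nablah v_n,\partial_t v_n\rangle_H - \langle w_n\partial_z v_n,\partial_t v_n\rangle_H.
\]

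The two trilinear terms I would then estimate using the anisotropic bilinear bounds of Lemma~\ref{lemma:lowreg}, in the same spirit as Step~3 of the proof of Proposition~\ref{prop:zweaksolutions}. Expanding the $H$-inner product as $\langle\cdot,\cdot\rangle_\Omega+\langle\partial_z\cdot,\partial_z\cdot\rangle_\Omega$ and applying Young's inequality absorbs $\tfrac{1}{2}\|\partial_t v_n\|_H^2$ into the left-hand side, leaving a right-hand side of the form $C\|f\|_H^2 + C\,\omega(\|v_n\|_H)\|\nablah v_n\|_H^2$ for a continuous superlinear $\omega$. Since $\|v_n\|_H\in L^\infty(0,T')$ and $\|\nablah v_n\|_H\in L^2(0,T')$ uniformly in $n$ by the $z$-weak bound \eqref{eq:Henergy_small}, the multiplier $\omega(\|v_n\|_H)\|\nablah v_n\|_H^2$ is integrable in time, and a nonlinear Grönwall argument as in Lemma~\ref{lem:nonlinGron} then gives
\[
\|\nablah v_n\|_{L^\infty((0,T'),H)}^2 + \|\partial_t v_n\|_{L^2((0,T'),H)}^2 \le C
\]
uniformly in $n$, where the initial contribution $\|P_n v_0\|_V$ is controlled by $\|v_0\|_V$ through the smoothness and $V$-orthogonality of the spectral basis $\{\Phi_{m,k}\}$.

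Weak-$\ast$ compactness along the same subsequence as in Proposition~\ref{prop:zweaksolutions} then transfers these bounds to $v$, providing $\nablah v\in L^\infty((0,T'),L^2(\Omega))$ and $\partial_t v\in L^2((0,T'),L^2(\Omega))$. The remaining regularity $\Deltah v\in L^2((0,T'),L^2(\Omega))$ is read off from the equation itself: applying the hydrostatic Helmholtz projection $\PP$ gives
\[
\PP\Deltah v = \partial_t v + \PP(v\cdot\nablah v + w\partial_z v) - \PP f,
\]
whose right-hand side lies in $L^2((0,T'),L^2(\Omega))$ by the bounds just obtained together with the anisotropic nonlinear estimates. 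Two-dimensional Stokes regularity on $G$ applied slicewise in $z$ then upgrades this to $\Deltah v$ and $\nablah p$ belonging to $L^2((0,T'),L^2(\Omega))$.

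The main obstacle I anticipate is the control of the nonlinear terms in the $H$-inner product. The $\partial_z$-part $\langle\partial_z(v_n\cdot\nablah v_n + w_n\partial_z v_n),\partial_z\partial_t v_n\rangle_\Omega$ does not enjoy the cancellation that is available when testing with $v_n$ itself, so every contribution must be handled directly. Using $\partial_z w_n=-\divh v_n$ to expand, one obtains cubic expressions in $\partial_z v_n$, $\nablah v_n$, $\nablah\partial_z v_n$ and $\divh v_n$ which have to be kept at most quadratic in $\|\nablah v_n\|_H$ so that the Grönwall step closes; this requires the sharp anisotropic inequalities of Lemma~\ref{lemma:lowreg}, precisely the tool that made the $z$-weak theory work in the first place.
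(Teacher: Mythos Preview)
Your approach has a genuine gap in the $\partial_z$-part of the trilinear estimate. Expanding $\partial_z(w_n\partial_z v_n)$ gives
\[
\langle\partial_z(w_n\partial_z v_n),\partial_z\partial_t v_n\rangle_\Omega = -\langle(\divh v_n)\,\partial_z v_n,\partial_z\partial_t v_n\rangle_\Omega + \langle w_n\,\partial_z^2 v_n,\partial_z\partial_t v_n\rangle_\Omega,
\]
and your list of ``cubic expressions in $\partial_z v_n$, $\nablah v_n$, $\nablah\partial_z v_n$ and $\divh v_n$'' omits the second term, which carries \emph{two} vertical derivatives of $v_n$. No variant of Lemma~\ref{lemma:lowreg} bounds this by quantities involving only one $z$-derivative; the $z$-weak estimate \eqref{eq:Henergy_small} gives no control on $\partial_z^2 v_n$; and nothing on your left-hand side (only $\|\partial_t v_n\|_H^2$ and $\tddt\|\nablah v_n\|_H^2$) can absorb it. There is a related difficulty already in the $\langle\cdot,\cdot\rangle_\Omega$-part: estimating $\langle v_n\cdot\nablah v_n,\partial_t v_n\rangle_\Omega$ or $\langle w_n\partial_z v_n,\partial_t v_n\rangle_\Omega$ via Lemma~\ref{lemma:lowreg}~b) produces a factor $\|\nablah^2 v_n\|_{L^2}\sim\|\Deltah v_n\|_{L^2}$, which is again absent from the left of your identity. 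Finally, note that your scheme would require $\|\nablah P_n v_0\|_H$, i.e.\ $\nablah\partial_z v_0\in L^2(\Omega)$, which is strictly more than the hypothesis $v_0\in H_{0,l}^1(\Omega)$.

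The paper sidesteps all of this by working in the $\langle\cdot,\cdot\rangle_\Omega$-product throughout (the $\Phi_{m,k}$ are eigenfunctions of $-\partial_z^2$, so \eqref{eq:galerkin2} is equivalent to \eqref{eq:galerkin21}) and by splitting into two consecutive tests. First test \eqref{eq:galerkin21} with $-\Deltah v_n$, i.e.\ multiply by the horizontal eigenvalue $\mu_m$ (resp.\ $\tilde\mu_m$) and by $g_n^{(mk)}$ and sum: this puts $\|\Deltah v_n\|_{L^2}^2$ on the left, and the nonlinear terms are handled by Lemma~\ref{lemma:lowreg}~b) using at most one $z$-derivative, closing via Gr\"onwall against \eqref{eq:Henergy_small}. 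Only then test with $\partial_t v_n$ in $\langle\cdot,\cdot\rangle_\Omega$: the nonlinear remainders now contain $\|\Deltah v_n\|_{L^2}$, already known to be in $L^2_t$ from the first step, so the time integral of the right-hand side is finite and the bound on $\|\partial_t v_n\|_{L^2_tL^2_x}$ follows. The order of these two steps is essential; collapsing them into a single $H$-test, as you propose, forces the uncontrollable $\partial_z^2 v_n$ contribution to appear.
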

\begin{proof}
Recall that 
we have obtained in the proof of Proposition~\ref{prop:zweaksolutions}  a solution $v_n=(v_{1n},v_{2n})\in H^1((0,T),H_{n,\overline{\sigma}})$ of the 
system \eqref{eq:galerkin2} of ordinary differential equations
with $v_n(0)=P_n v_0$. These satisfy the $L^2$-estimate  \eqref{eq:L2_Energy} on $[0,T]$ and the $H$-estimate \eqref{eq:Henergy_small} on some small time interval $[0,T']$ for $T'\in (0,T]$, where $T=T'$ if the data are sufficiently small.

%

\textit{Step 1 ($L^2_t$-$L^2_x$-estimate on $\Deltah v$ and   $L^\infty_t$-$L^2_x$-estimate on $\nablah v$).}
Here we use the higher regularity of the initial data and the fact, that the functions $\Phi_{m,k}$ defined in \eqref{eq:Phi} and \eqref{eq:Phi0}
are a  basis of eigenfunctions to certain operators. 
Multiplication of \eqref{eq:galerkin21} first with the eigenvalue $\mu_m$ of $\varphi_{m}$ or 
respectively $\tilde{\mu}_m$ of $\tilde{\varphi}_{m}$ and second with $g_{n}^{(mk)}$, and then summing over both $m$ and 
$k$ gives
\begin{multline*}
 \left< \partial_t v_n,-\Deltah v_n \right>_{\Omega} + \left<v_n\cdot\nablah v_n, 
 -\Deltah v_n\right>_{\Omega}  +  \left< w_n \partial_z v_n,-\Deltah v_n
 \right>_{\Omega}+  \left<\Deltah v_n,\Deltah v_n\right>_{\Omega} \\ 
 = \left<f,-\Deltah v_n  \right>_{\Omega},
\end{multline*}
and it follows that
\begin{multline*}
 \tddt \|\nablah v_n \|_{L^2(\Omega)}^2 + \|\Deltah v_n
 \|_{L^2(\Omega)}^2 \\ 
 \leq 2\left<v_n\cdot\nablah v_n,\Deltah v_n\right>_{\Omega}+2\left< 
 w_n \partial_z v_n,\Deltah v_n\right>_{\Omega}+\|f\|_{L^2(\Omega)}^2.
\end{multline*}
Using Lemma~\ref{lemma:lowreg}~b) with $f=\Deltah v_n$, $g=v_n$ and $h=\nabla_Hv_n$ we get
\begin{align*}
& |\left< v_n\cdot\nablah v_n,\Deltah v_n \right>_{\Omega}| \\
&\qquad \leq c \norm{\Deltah v_n}_{L^2(\Omega)}  \norm{\nablah^2 v_n}_{L^2(\Omega)}^{1/2} 
\norm{\nablah v_n}_{L^2(\Omega)}^{1/2}\\
&\qquad \qquad \cdot \norm{v_n}_{H}^{1/2}
(\norm{v_n}_{L^2(\Omega)} +\norm{\nablah v_n}_{L^2(\Omega)})^{1/2}\\
&\qquad \leq c\norm{\Deltah v_n}_{L^2\Omega)}^{3/2}
(\norm{\nablah v_n}_{L^2(\Omega)}^{1/2}\norm{v_n}_{H}
+\norm{\nablah v_n}_{L^2(\Omega)}\norm{v_n}_{H}^{1/2})\\
&\qquad \leq \frac18 \norm{\Deltah v_n}_{L^2(\Omega)}^{2}+c(\norm{v_n}^4_{H}
+\norm{\nablah v_n}^2_{L^2(\Omega)}\norm{v_n}_{H}^{2})\norm{\nablah v_n}^2_{L^2(\Omega)},
\end{align*}
and by Lemma~\ref{lemma:lowreg}~b) with $f=\Deltah v_n$, $g=w_n$ and $h=\partial_zv_n$
\begin{align*}
&|\left< w_n \partial_z v_n,\Deltah v_n \right>_{\Omega}| \\
& \qquad\leq c \norm{\Deltah v_n}_{L^2(\Omega)}
\norm{\nablah \partial_z v_n}_{L^2(\Omega)}^{1/2} 
\norm{\partial_z v_n}_{L^2(\Omega)}^{1/2}\\
& \qquad\qquad \cdot (\norm{w_n }_{L^2(\Omega)} 
+\norm{\partial_z w_n }_{L^2(\Omega)})^{1/2}
(\norm{w_n}_{L^2(\Omega)} +\norm{\nablah w_n }_{L^2(\Omega)})^{1/2}\\
& \qquad \leq c \norm{\Deltah v_n}_{L^2(\Omega)} 
\norm{\nablah \partial_z v_n}_{L^2(\Omega)}^{1/2} 
\norm{\partial_z v_n}_{L^2(\Omega)}^{1/2}
\norm{\nablah v_n}_{L^2(\Omega)}^{1/2}\\
& \qquad\qquad \cdot (\norm{\nablah v_n}_{L^2(\Omega)} 
+\norm{\nablah^2 v_n}_{L^2(\Omega)})^{1/2}\\
& \qquad\leq c  \norm{v_n}_{V}^{1/2}\norm{v_n}_{H}^{1/2}\\
& \qquad\qquad \cdot \left(\norm{\Deltah v_n}_{L^2(\Omega)}
\norm{\nablah v_n}_{L^2(\Omega)}+\norm{\nablah v_n}_{L^2(\Omega)}^{1/2}
\norm{\Deltah v_n}_{L^2(\Omega)}^{3/2}\right)\\
& \qquad \leq \frac18 \norm{\Deltah v_n}_{L^2(\Omega)}^{2}
+c\norm{\nablah v_n}_{L^2(\Omega)}^2
\left(\norm{v_n}_{V}\norm{v_n}_{H}
+\norm{v_n}^2_{V}\norm{v_n}^2_{H}\right).
\end{align*}
So, for some $c>0$ we have the estimate 
\begin{multline}\label{eq:energyH1}
\tddt\|\nablah v_n\|_{L^2(\Omega)}^2+\tfrac12\|\Deltah v_n\|_{L^2(\Omega)}^2
 \leq \|f\|_{L^2(\Omega)}^2\\
  + c\left(\norm{v_n}_{V}\norm{v_n}_{H}
+\norm{v_n}^2_{V}\norm{v_n}^2_{H}+\norm{v_n}^4_{H}+\norm{\nablah v_n}^2_{L^2}
\norm{v_n}_{H}^{2}\right)\norm{\nablah v_n}_{L^2}^{2}.
\end{multline}
Our previous estimate \eqref{eq:Henergy_small} shows, that the pre-factor of $\norm{\nablah v_n}_{L^2}^{2}$ 
on the right hand side has a bounded time integral for small times or small data and thus Gr\"onwall's lemma implies 
that 
\begin{multline}\label{eq:H1energy_small}
\|\nablah v_n\|_{L^{\infty}((0,T),L^2(\Omega))}^2
+\|\Deltah v_n\|_{L^2((0,T),L^2(\Omega))}^2\\
\leq c\left(\|\nablah v_0\|_{L^2(\Omega)}^2+ \|f\|_{L^2((0,T),L^2(\Omega))}^2\right).
\end{multline}

\textit{Step 2 ($L^2_t$-$L^2_x$-estimate on $\partial_t v$).}
To obtain a better regularity in time we multiply \eqref{eq:galerkin21} 
with $\partial_t g_{n}^{(mk)}$ and sum over $m$ and $k$. It follows that
\begin{multline*}
 \left< \partial_t v_n, \partial_t v_n \right>_{\Omega} + \left<v_n\cdot\nablah v_n, 
  \partial_t v_n\right>_{\Omega}  +  \left< w_n \partial_z v_n, \partial_t v_n
 \right>_{\Omega}-  \left< \Deltah v_n, \partial_t v_n\right>_{\Omega} \\ 
 = \left<f, \partial_t v_n \right>_{\Omega}.
\end{multline*}
Similar to the above we get by Lemma~\ref{lemma:lowreg}~b) with $f=\partial_tv_n$, $g=v_n$ and $h=\nabla_Hv_n$ that
\begin{align*}
 |\left< v_n\cdot\nablah v_n,\partial_t v_n \right>_{\Omega}| 
& \leq c \norm{\partial_t v_n}_{L^2(\Omega)}  \norm{\nablah^2 v_n}_{L^2(\Omega)}^{1/2} 
\norm{\nablah v_n}_{L^2(\Omega)}^{1/2}\\
& \qquad \cdot \norm{v_n}_{H}^{1/2}
(\norm{v_n}_{L^2(\Omega)} +\norm{\nablah v_n}_{L^2(\Omega)})^{1/2}\\
& \leq c\norm{\partial_t v_n}_{L^2(\Omega)}
\norm{\Deltah v_n}_{L^2(\Omega)}^{1/2} \norm{ v_n}_{H^1(\Omega)}^{3/2}\\
& \leq \frac18\norm{\partial_t v_n}^2_{L^2(\Omega)}+c
\norm{\Deltah v_n}_{L^2(\Omega)}\norm{v_n}_{H^1(\Omega)}^{3},
\end{align*}
where we used that due to the ellipticity of the Laplacian the second derivatives $\nablah^2 v_n$ can be bounded in $L^2(G)$ 
and hence also in $L^2(\Omega)$ by the horizontal 
Laplacian $\Deltah v_n$.
By Lemma~\ref{lemma:lowreg}~b) with $f=\partial_tv_n$, $g=w_n$ and $h=\partial_zv_n$ we obtain that
\begin{align*}
|\left< w_n \partial_z v_n,\partial_t v_n  \right>_{\Omega}| 
& \leq c \norm{\partial_t v_n}_{L^2(\Omega)} 
\norm{\nablah \partial_z v_n}_{L^2(\Omega)}^{1/2} 
\norm{\partial_z v_n}_{L^2(\Omega)}^{1/2}
\norm{\nablah v_n}_{L^2(\Omega)}^{1/2}\\
& \qquad \cdot (\norm{\nablah v_n}_{L^2(\Omega)} 
+\norm{\Deltah v_n}_{L^2(\Omega)})^{1/2}\\
& \leq c \norm{\partial_t v_n}_{L^2(\Omega)}
\norm{v_n}_{V}^{1/2} \norm{v_n}_{H^1(\Omega)}\\
& \qquad \cdot(\norm{v_n}_{H^1(\Omega)}+\norm{\Deltah v_n}_{L^2(\Omega)})^{1/2}\\
& \leq \frac18\norm{\partial_t v_n}^2_{L^2(\Omega)}\\
 &\qquad\quad +c\norm{v_n}_{V} \norm{v_n}^2_{H^1(\Omega)}
 (\norm{v_n}_{H^1(\Omega)}+\norm{\Deltah v_n}_{L^2(\Omega)})
\end{align*}
which leads to
\begin{multline*}
\tddt\|\nablah v_n\|_{L^2(\Omega)}^2+ \|\partial_t v_n\|^2_{L^2(\Omega)}
\leq 2\|f\|_{L^2(\Omega)}^2\\
+c\left(\norm{v_n}_{V} \norm{v_n}^2_{H^1(\Omega)}
(\norm{v_n}_{H^1(\Omega)}+\norm{\Deltah v_n}_{L^2(\Omega)})
+\norm{\Deltah v_n}_{L^2(\Omega)}\norm{v_n}_{H^1(\Omega)}^{3}\right).
\end{multline*}
The time integral of the right hand side is bounded by Step 1 and \eqref{eq:Henergy_small} assuming smallness of time or data, 
there is a $c>0$ with
\begin{align*}
 \|\partial_t v_n\|^2_{L^2((0,T),L^2(\Omega))}
\leq c(\|\nablah v_0\|_{L^2(\Omega)}^2+
\|f\|_{L^2((0,T),L^2(\Omega))}^2).
\end{align*}
Now we pass to the limit as in the proof of the existence of a $z$-weak solution 
and we get, that  for a subsequence (which we do not rename) also $\partial_t v_n$ and $\Deltah v_n$ converge weakly in 
$L^2((0,T),L^2(\Omega))$ and that the limit of $\nablah v_n$ is in 
$L^{\infty}((0,T),L^2(\Omega))$.


\end{proof}
%

\section{Time-periodic solutions for small forces}\label{sec:per}
The methods used to prove global existence and uniqueness results for the initial value problem for small data can be adapted for the construction of time-periodic solutions. This will be done first for $z$-weak and then for strong solutions. 
%
%

\begin{proposition}[Existence and uniqueness of time-periodic solutions] \label{prop:periodiczweak} For $\varepsilon_b>0$  there exists $\varepsilon_a>0$ such that 
	\begin{enumerate}
		\item[(a)] if $f\in L^2((0,T),V')$ with
		$\|f \|_{L^2((0,T),V')}<\varepsilon_a$, then there exists a  $T$-periodic $z$-weak solution with $\sup_{0 <s<T}\|v_n(s)\|^2_{H} +\|\nablah v_n(s)\|^2_{L^2((0,T),H)} <\varepsilon_b$;
		\item[(b)] if $f\in L^2((0,T),H)$
		with $\|f \|_{L^2((0,T),H)}<\varepsilon_a$, then there exists a  $T$-periodic strong solution with $\sup_{0 <s<T}\|v_n(s)\|^2_{H^1}+ \|\Deltah v_n\|_{L^2((0,T),L^2(\Omega))}^2<\varepsilon_b$.
	\end{enumerate}
\end{proposition}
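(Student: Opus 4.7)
The plan is to adapt the strategy of Galdi--Hieber--Kashiwabara mentioned in the introduction and obtain a $T$-periodic solution as a fixed point of the Poincar\'e map
\[
\Phi \colon v_0 \longmapsto v(T;v_0,f),
\]
where $v(\cdot;v_0,f)$ is the $z$-weak (part (a)) respectively strong (part (b)) solution supplied by Propositions~\ref{prop:zweaksolutions} and~\ref{prop:locstrong}. I would set $\Phi$ on a closed ball $B_r$ in $H_{\overline\sigma}$, respectively $V_{\overline\sigma}:= H^1_{0,l}(\Omega)^2\cap \lso(\Omega)$, choose $r=r(\varepsilon_b)$ first and then $\varepsilon_a=\varepsilon_a(r)$ small enough that $\Phi$ is well defined on $[0,T]$, maps $B_r$ into itself, and is a strict contraction.

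The key new ingredient compared with the a~priori bounds used for the initial value problem is the Poincar\'e inequality of Lemma~\ref{lem:poincare}, which, by the lateral Dirichlet condition, gives $\|v\|_H \le C_P\|\nablah v\|_H$ on $H_{\overline\sigma}$. Rebalancing the Young splittings in the derivation of \eqref{eq:H_energydt} so that a factor $(1-\eta)\|\nablah v\|_H^2$ survives on the left, the cubic/quintic prefactor $c\|\nablah v\|_{L^2}^2(\|v\|_H^2+\|v\|_H^4)$ is absorbed into the dissipation as soon as $r$ is small enough, and Poincar\'e then turns the inequality into
\[
\tddt\|v\|_H^2 + \kappa \|v\|_H^2 \leq C\|f\|_{V'}^2
\]
for a $\kappa>0$ independent of $v_0$. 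Gr\"onwall gives
\[
\|v(T)\|_H^2 \leq e^{-\kappa T}\|v_0\|_H^2 + \tfrac{C}{\kappa}\|f\|^2_{L^2((0,T),V')},
\]
so for $r$ and $\varepsilon_a$ chosen with $e^{-\kappa T}r^2 + C\varepsilon_a^2/\kappa \leq r^2$ and $r\le \sqrt{\varepsilon_b}/2$, one has $\Phi(B_r)\subset B_r$; integrating the dissipative inequality over $[0,T]$ then produces the claimed bound $\sup_{s\in(0,T)}\|v(s)\|_H^2 + \|\nablah v\|^2_{L^2((0,T),H)} < \varepsilon_b$.

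For contractivity, I would repeat the reasoning on the difference $u=v^{(1)}-v^{(2)}$ of two solutions with data in $B_r$. Since $u$ satisfies the linearised equation with zero force, the nonlinear estimates used in Step~6 of the proof of Proposition~\ref{prop:zweaksolutions}, combined with Poincar\'e and the smallness of $\|v^{(i)}\|_H$ and $\|\nablah v^{(i)}\|_H$, yield $\tddt\|u\|_H^2 + \tilde\kappa \|u\|_H^2 \le 0$, hence $\|\Phi v_0^{(1)}-\Phi v_0^{(2)}\|_H \leq e^{-\tilde\kappa T/2}\|v_0^{(1)}-v_0^{(2)}\|_H$, and Banach's theorem furnishes a unique fixed point in $B_r$, i.e.\ the desired periodic $z$-weak solution. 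Part (b) follows along exactly the same lines on a small ball in $V_{\overline\sigma}$, starting from the strong energy identity \eqref{eq:energyH1} and using $\|\nablah v\|_{L^2}^2 \le C\|\Deltah v\|_{L^2}^2$ as the dissipation; the main obstacle I anticipate is absorbing the higher-degree prefactors $\norm{v}_{V}\norm{v}_{H}+\norm{v}^2_{V}\norm{v}^2_{H}+\norm{v}^4_{H}+\norm{\nablah v}^2_{L^2}\norm{v}_{H}^{2}$ that appear in \eqref{eq:energyH1} into the $\|\Deltah v\|_{L^2}^2$ dissipation and then transferring the contraction estimate to the $V$-norm via the continuous dependence recorded in Remark~\ref{rem:contdep}, which forces a further shrinkage of $\varepsilon_a$ but no conceptual change.
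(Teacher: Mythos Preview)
Your overall strategy---fixed point of the Poincar\'e map under a smallness hypothesis---matches the paper's, but the implementation is genuinely different. The paper works at the level of the Galerkin approximations $H_{\overline\sigma,n}$: it shows that the finite-dimensional map $v_n(0)\mapsto v_n(T)$ sends a small ball into itself, invokes \emph{Brouwer's} fixed point theorem to obtain $v_{n,0}=v_n(T)$, and then passes to the limit $n\to\infty$ using the compactness already established in the proof of Proposition~\ref{prop:zweaksolutions}. In particular no contraction estimate is required, only the continuous self-map property. Your proposal instead works directly on the infinite-dimensional space and relies on \emph{Banach's} contraction principle, which would yield uniqueness in the ball for free but forces you to produce a Lipschitz estimate on $\Phi$.

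That Lipschitz step is where your sketch has a gap. Step~6 of Proposition~\ref{prop:zweaksolutions} controls the difference $u=v^{(1)}-v^{(2)}$ only in $L^2(\Omega)$, not in $H$; the inequality $\tddt\|u\|_H^2+\tilde\kappa\|u\|_H^2\le 0$ does not follow from those estimates as written. To salvage the Banach argument you must either derive a separate energy inequality for $\partial_z u$ (mimicking Step~3 of Proposition~\ref{prop:zweaksolutions}, with extra cross terms coming from the linearisation), or---more economically---note that the $H$-ball $B_r$ is closed in $L^2$ and run the contraction in the $L^2$-metric while keeping the self-map in $H$. Either fix is routine, but neither is what you wrote. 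The same caveat applies to part~(b): continuous dependence as in Remark~\ref{rem:contdep} gives Lipschitz dependence on the data, not a Lipschitz constant strictly below~$1$, so the $V$-norm contraction also needs its own direct argument rather than an appeal to that remark.
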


\begin{proof}[Proof of Proposition~\ref{prop:periodiczweak}]
		Consider as in the proof of Theorem~\ref{prop:zweaksolutions} the finite dimensional spaces $H_n$. Adapting the strategy of \cite{Galdi2017}, we consider 
		the Poincar\'e map
		\begin{align*}
		H_n \rightarrow H_n, \quad  v_{n}(0) \mapsto v_n(T),
		\end{align*}
		where $v_n$ is the solution to \eqref{eq:galerkin2}.
		
	\textit{Step 1 (Existence of a $z$-weak solution).}	
	Note that for $f\in L^2((0,T),V')$ the differential energy inequality \eqref{eq:H_energydt} can be modified 
	using the Poincar\'e inequality from Lemma~\ref{lem:poincare} 
	to become for some $c>0$
	\begin{align*}
	\tddt\|v_n\|^2_{H}+ c\|v_n\|_{H}^2 \leq c\|f\|_{V'}^2
	+c \|\nablah v_n\|^2_{L^{2}(\Omega)} \left(\|v_n\|_{H}^2+\|v_n\|_{H}^4\right),
	\end{align*}
	and multiplying by $e^{sc}$ and integrating with respect to $t$ this becomes
	\begin{multline*}
	e^{tc}\|v_n(t)\|^2_{H}\leq \|v_n(0)\|^2_{H} + c\int_0^te^{sc}\|f(s)\|^2_{V'} \d\! s \\
	+c \int_0^te^{sc}\|\nablah v_n(s)\|^2_{L^{2}(\Omega)} \left(\|v_n(s)\|_{H}^2+\|v_n(s)\|_{H}^4\right) \d\! s.
	\end{multline*}
	Assuming that $\|v_n(0)\|_{H}$ and $\|f\|_{L^2((0,T),V')}$ are sufficiently small, one has from \eqref{eq:Henergy_small} for $t=T$ and \eqref{eq:L2_Energy} that
	\begin{align}\label{eq:perH}
	e^{Tc}\|v_n(T)\|_{H}^2\leq \|v_n(0)\|_{H}^2 
	+c e^{Tc} (\|v_n(0)\|^4_{H} +\|f(s)\|^2_{L^2((0,T),V')}).
	\end{align}
	Assume now that $\|v_n(0)\|^2_{H}\leq R$ for $R\in (0,1)$ being small enough to satisfy the smallness condition and $(1-e^{-Tc})/c>R$, moreover let $\|f(s)\|_{L^2((0,T),L^2(\Omega))}$ be sufficiently small for \eqref{eq:Henergy_small} to hold and 
	\begin{align*}
	\|f(s)\|^2_{L^2((0,T),V')} \leq \frac{1}{ce^{cT}}(e^{cT}-1)R - R^2.
	\end{align*}
	Then
	\begin{align*}
	e^{Tc}\|v_n(T)\|_{H}\leq R 
	+ce^{Tc}R^2 +  (e^{cT}-1)R-e^{Tc}R^2 = Re^{Tc}.
	\end{align*}
	Hence, for this $R>0$, and $B_{R,n}:=\{v_n\in H_{\overline{\sigma},n}\colon \|v_n\|_{H}\leq R  \}$ and given $f$, the map 
	\begin{align*}
	B_{R,n} \rightarrow B_{R,n}, \quad v(0) \mapsto v(T), 
	\end{align*}
	where $v_n$ is the solution to \eqref{eq:galerkin2} is a continuous self-mapping. By Brouwer's fixed point theorem, for any $n\in \N_0$, there is a fixed point, i.e., $v_{n,0}\in H_{n,\overline{\sigma}}$ with $v_{n,0}=v_n(0)=v_n(T)$. Since the $v_{n,0}$ are uniformly bounded in $H$ by $R$ there is a convergent subsequence in $L^2(\Omega)$, the limit of which is in $H$. Following the proof of Proposition~\ref{prop:zweaksolutions}, the approximate solutions $v_n$ converge to a $z$-weak solution with $v(0)=v(T)$.  
	
	By \eqref{eq:Henergy_small} 
	\begin{align*}
	\|v_n(s)\|^2_{L^\infty((0,T),H)} +\|\nablah v_n(s)\|^2_{L^2((0,T),H)} &\leq c(\|v_0\|^2_{H}+\|f\|^2_{L^2((0,T),V')})  \\ 
	&\leq c(R^2 + \varepsilon_a^2)
	\end{align*}
	which for $R$ and $\varepsilon_a$ sufficiently small is smaller $\varepsilon_b$.
	
	\textit{Step 2 (Existence of a strong solution).}	
	Using Lemma~\ref{lem:poincare}, one can modify \eqref{eq:energyH1} to become after multiplying $e^{sc}$ and integrating with respect to $t$
	\begin{multline*}
	e^{tc}\|\nablah v_n(t)\|_{L^2(\Omega)}^2
	\leq \|\nablah v_n(0)\|_{L^2(\Omega)}^2 + c\int_0^te^{sc} \|f(s)\|_{L^2(\Omega)}^2 \d\! s\\
	+ c\int_0^te^{sc}\bigg(\norm{v_n(s)}_{V}\norm{v_n(s)}_{H}
	+\norm{v_n(s)}^2_{V}\norm{v_n(s)}^2_{H}\\ +\norm{v_n(s)}^4_{H}+\norm{\nablah v_n(s)}^2_{L^2}
	\norm{v_n(s)}_{H}^{2}\bigg)\norm{\nablah v_n(s)}_{L^2}^{2} \d\! s.
	\end{multline*}
	Assuming that $\|v_0\|_{{H^1}(\Omega)}$ and $\|f\|_{L^2((0,T),H)}$ are sufficiently small, one can combine this with the previously obtained estimate \eqref{eq:perH} to obtain	
		\begin{align*}
		e^{Tc}\|v_n(T)\|_{H^1(\Omega)}\leq \|v_n(0)\|_{H^1(\Omega)} 
		+c e^{Tc} (\|v_n(0)\|^2_{H^1(\Omega)} +\|f(s)\|_{L^2((0,T),L^2(\Omega))}).
		\end{align*}
	Proceeding now analogously to the above, one proves the existence of a small $T$-periodic solution with $v(0)=v(T)\in H^1(\Omega)^2\cap \lso(\Omega)$ sufficiently small. By Proposition~\ref{prop:locstrong} this is a strong solution. 
	
	The norm estimate follows as above, but now by combining \eqref{eq:Henergy_small} with \eqref{eq:H1energy_small}.
\end{proof}

\begin{proof}[Proof of Theorem~\ref{thm:periodic}]
	The existence of $T$-periodic solutions in Theorem~\ref{thm:periodic} (a) and (b) follows directly from Proposition~\ref{prop:periodiczweak} (a) for forces  $\|f \|_{L^2((0,T),V')}<\varepsilon_a$ and (b) for forces  $\|f \|_{L^2((0,T),H)}<\varepsilon_a$, respectively. 
	
	Now, to prove the uniqueness let $v_1$ be the $T$-periodic $z$-weak solution constructed in Proposition~\ref{prop:periodiczweak} satisfying the required smallness assumption, and let
	and $v_2$ be another $T$-periodic $z$-weak solutions for the same $f\in L^2((0,T),V')$.
	As in Step 6 in the proof of Theorem~\ref{prop:zweaksolutions}, 
	one considers $u:=v_1-v_2$, 
	and then it holds that
	\begin{align*}
	\tddt \|u\|_{L^2(\Omega)}^2+\|\nablah u\|_{L^2(\Omega)}^2\leq c(\norm{\nablah v_{1}}^{2}_{H} + \norm{\nablah v_{1}}^{2}_{H}\norm{v_{1}}^{2}_{H})\norm{u}^{2}_{L^2(\Omega)},
	\end{align*}
	compare \eqref{eq:u_estimate}, and hence by Poincar\'e's inequality, cf. Lemma~\ref{lem:poincare}, 
	\begin{align*}
	\tddt \|u\|_{L^2(\Omega)}^2\leq c(\norm{\nablah v_1}^{2}_{H} + \norm{\nablah v_1}^{2}_{H}\norm{v_1}^{2}_{H}-C)\norm{u}^{2}_{L^2(\Omega)}.
	\end{align*}
	By the differential form of Gr\"onwall's inequality
	\begin{multline*}
	\|u(0)\|_{L^2(\Omega)}^2=\|u(T)\|_{L^2(\Omega)}^2\\
	\leq \|u(0)\|_{L^2(\Omega)}^2 e^{c\int_0^T\norm{\nablah v_1(s)}^{2}_{H} + \norm{\nablah v_1(s)}^{2}_{H}\norm{v_1(s)}^{2}_{H} \d\! s-TC}.
	\end{multline*}
	and for $c\int_0^T\norm{\nablah v_1(s)}^{2}_{H} + \norm{\nablah v_1(s)}^{2}_{H}\norm{v_1(s)}^{2}_{H} \d\! s<TC$, one has a factor smaller than one, and hence $\|u(0)\|_{L^2(\Omega)}^2=0$ which implies by the uniqueness for the initial value problem uniqueness of the $T$-periodic solutions.
	 This condition holds provided that $\varepsilon_b$ is so small 
	 that $C(\varepsilon_b + \varepsilon_b^2) < CT$, and one chooses $\varepsilon=\varepsilon_a$ with the corresponding $\varepsilon_a$.
\end{proof}


\section{Global strong solutions}\label{sec:glob}
The main idea is to establish first a global \textit{a priori} bound for some smooth data, and second to use both the partial parabolic smoothing in the horizontal directions and the conservation of regularity in vertical direction to show that some $z$-weak solutions reach this setting for $t>0$.

\subsection{Global \textit{a priori} bound}\label{subsec:apriori}
For the global bound on strong solutions (cf. Proposition~\ref{prop_unif_bound}), we prove a differential inequality of the form \[
f^\prime \le \Vert v\Vert _{L^\infty} ^2f,\]
where $f$ contains certain Sobolev-norms of the solution, via performing the first order estimates (cf. Proposition~\ref{prop_vert_fo_est} and \ref{prop_hor_fo_est}). As in  \cite{CaoLiTiti2017}, to control the $\Vert v\Vert _\infty$-coefficient, we use the logarithmic Sobolev inequality (cf. Proposition~\ref{prop_log_sob}) and show that the $L^q$-norm of the solutions grow asymptotically at most as $\sqrt{q}$ (cf. Proposition~\ref{prop_lq_est}). Then the classical Gr\"onwall lemma gives the desired bound. This implies global existence via a standard contradiction argument.
To prove the logarithmic Sobolev inequality in our setting we need the following extension result. 

\begin{lemma}\label{lem:extension}
	Let $q_H,q_z\in (1,\infty )$ and $f\in L^{q_H}((-h,h),H^{1,q_H}_0(G))$, such that $\partial _zf\in L^{q_z}(\Omega )$.
	Then there exists an extension $\tilde{f}:\mathbb{R}^3\rightarrow \mathbb{R}$, such that \[
	\Vert f\Vert _{L^\infty}\le \Vert \tilde{f}\Vert _{L^\infty (\mathbb{R}^3)},\quad \Vert \tilde{f}\Vert _{L^q(\mathbb{R}^3)}\le C\Vert f\Vert _{L^q},\quad \Vert \partial_i\tilde{f}\Vert _{L^{q_i}(\mathbb{R}^3)}\le C\Vert \partial _if\Vert _{L^{q_i}},
	\]
	for all $q\in (1,\infty )$ and $(q_i,\partial_i)\in \{(q_H,\nabla_H ),(q_z,\partial _z)\}$.\end{lemma}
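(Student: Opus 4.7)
The plan is a two-stage extension exploiting the asymmetric boundary behaviour of $f$: horizontally $f$ vanishes on $\partial G$, so the trivial zero extension is admissible, while vertically no boundary condition is prescribed and one must reflect.

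First, I would extend by zero across the lateral boundary. Since $f(\cdot,z)\in H^{1,q_H}_0(G)$ for almost every $z\in(-h,h)$, the standard fact that such functions admit a zero extension in $H^{1,q_H}(\mathbb{R}^2)$ yields an intermediate $f_1\colon(-h,h)\times\mathbb{R}^2\to\mathbb{R}$ for which $\|f\|_{L^\infty}$, $\|f\|_{L^q}$, $\|\nabla_H f\|_{L^{q_H}}$ and $\|\partial_z f\|_{L^{q_z}}$ are preserved exactly.

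Second, I would reflect $f_1$ evenly across $z=\pm h$ and cut off. Fix $\chi\in C_c^\infty(\mathbb{R})$ with $\chi\equiv 1$ on $[-h,h]$, $\operatorname{supp}\chi\subset[-2h,2h]$, and $0\le\chi\le 1$, set
\[
f_2(x,y,z):=\begin{cases}f_1(x,y,z),&|z|\le h,\\ f_1(x,y,2h-z),&h<z\le 2h,\\ f_1(x,y,-2h-z),&-2h\le z<-h,\\ 0,&|z|>2h,\end{cases}
\]
and define $\tilde{f}:=\chi f_2$. Since $\chi\equiv 1$ on $[-h,h]$, one has $\tilde{f}|_\Omega=f$, giving $\|f\|_{L^\infty}\le\|\tilde{f}\|_{L^\infty}$. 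The reflection is measure-preserving, so $\|\tilde{f}\|_{L^q(\mathbb{R}^3)}\le C\|f\|_{L^q}$ for every $q\in(1,\infty)$. From $\nabla_H\tilde{f}=\chi\nabla_H f_2$ one reads off $\|\nabla_H\tilde{f}\|_{L^{q_H}(\mathbb{R}^3)}\le C\|\nabla_H f\|_{L^{q_H}}$ directly. For the vertical derivative $\partial_z\tilde{f}=\chi' f_2+\chi\partial_z f_2$, the second term is controlled by $C\|\partial_z f\|_{L^{q_z}}$.

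The main technical obstacle is the cutoff-derivative term $\chi' f_2$, which is supported in $\{h<|z|<2h\}\times G$ and a priori contributes a term of order $\|f\|_{L^{q_z}(\Omega)}$ rather than $\|\partial_z f\|_{L^{q_z}}$. To recover the clean form claimed in the lemma, one absorbs this lower-order term via the horizontal Poincaré inequality (available since $f$ satisfies the Dirichlet condition on $\partial G$) together with a Sobolev embedding on the bounded slices $G\subset\mathbb{R}^2$ to pass between the $L^{q_H}$- and $L^{q_z}$-regularities; the anisotropy $q_H\ne q_z$ requires care with the embedding exponent, but since $G$ is bounded this reduces to an elementary Hölder-type computation that does not alter the structure of the estimate.
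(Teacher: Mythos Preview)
Your construction is essentially the same as the paper's: zero extension across $\partial G$ (using $f(\cdot,z)\in H^{1,q_H}_0(G)$), followed by an even reflection in $z$ and multiplication by a smooth vertical cutoff. The paper phrases the vertical step as ``reflect at $z=h$, extend periodically, then cut off'', but this is the same mechanism.

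You correctly isolate the one genuine obstruction --- the cutoff term $\chi'f_2$ in $\partial_z\tilde f$ --- which the paper's short proof does not mention at all. However, your proposed repair does not deliver the stated inequality. Horizontal Poincar\'e on $G$ controls $\|f(\cdot,z)\|_{L^{q_H}(G)}$ by $\|\nabla_H f(\cdot,z)\|_{L^{q_H}(G)}$, and after a Sobolev embedding on the slice you can at best reach $\|f(\cdot,z)\|_{L^{q_z}(G)}\le C\|\nabla_H f(\cdot,z)\|_{L^{q_H}(G)}$ under an exponent restriction; integrating in $z$ then forces $q_z\le q_H$ (no vertical gain is available on the bounded interval). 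Even when this chain goes through, the resulting bound is
\[
\|\partial_z\tilde f\|_{L^{q_z}(\mathbb{R}^3)}\le C\bigl(\|\partial_z f\|_{L^{q_z}}+\|\nabla_H f\|_{L^{q_H}}\bigr),
\]
which is \emph{not} the bound by $\|\partial_z f\|_{L^{q_z}}$ alone that the lemma asserts. There is no vertical Poincar\'e inequality here (no boundary condition at $z=\pm h$), so $\|f\|_{L^{q_z}}$ cannot be absorbed into $\|\partial_z f\|_{L^{q_z}}$.

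The clean resolution is simply to accept the extra lower-order term: the construction actually yields
\[
\|\partial_z\tilde f\|_{L^{q_z}(\mathbb{R}^3)}\le C\bigl(\|\partial_z f\|_{L^{q_z}}+\|f\|_{L^{q_z}}\bigr),
\]
and this is all that is needed downstream, since in the logarithmic Sobolev inequality (the only place the lemma is invoked) the quantity $\|F\|_{L^{p_i}}$ already appears alongside $\|\partial_iF\|_{L^{p_i}}$ inside the logarithm.
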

\begin{proof}
	The idea is, vertically, to reflect $f$ on $z=h$, extend it periodically to a function $\hat{f}:G\times \mathbb{R}\rightarrow \mathbb{R}$ and cut it off thereafter. Horizontally, we simply extend it by zero. More explicitly
	\[\tilde{f}(x,y,z)=\left \{\begin{array}{ll}\phi (z)\hat{f}(x,y,z)&(x,y)\in G,\\
	0& \text{else,}
	\end{array} \right .\]
	where
	\[\hat{f}(x,y,z)=\left \{\begin{array}{ll}
	f(x,y,z+4kh)&\exists k\in \mathbb{Z}:z+4kh\in[-h,h],\\
	f(x,y,2h-(z+4kh))&\exists k\in \mathbb{Z}: z+4kh\in [h,3h],
	\end{array} \right .\]
	for some $\phi \in C_0^\infty (\mathbb{R})$, such that $\phi \equiv 1$ on $(-h,h)$, and $0\le \phi \le 1$ on $\mathbb{R}$.
\end{proof}
\begin{proposition}[Logarithmic Sobolev inequality]\label{prop_log_sob}Let
 $p=(p_1,p_2,p_3)\in(1,\infty )^3$ with $p_H=p_1=p_2$ and $\sum ^3_{i=1}p_i^{-1}<1$. Then for any $F\in L^{p_H}((-h,h),H^{1,p_H}_0(G))$ such that $\partial _zF\in L^{p_3}(\Omega )$, we have
	\[\Vert F\Vert _{L^\infty }\le C_{p,\lambda ,\Omega}\max \left \{ 1,\sup_{q\ge 2}\frac{\Vert F\Vert _{L^q}}{q^\lambda}\right \}\log ^\lambda \left ( e+\sum ^3_{i=1}(\Vert F\Vert _{L^{p_i}}+\Vert \partial _iF\Vert _{L^{p_i}})\right ),\]
	for any $\lambda >0$ when all the norms are finite.
\end{proposition}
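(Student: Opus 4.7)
The plan is to reduce to $\mathbb{R}^3$ via Lemma~\ref{lem:extension}, perform a Littlewood--Paley split at a frequency $N$, bound low frequencies via Bernstein against the $L^q$ norm, control high frequencies through an anisotropic Sobolev embedding (available because $\sum p_i^{-1}<1$), and optimize both $N$ and $q$ to recover the logarithmic factor.

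Concretely, apply Lemma~\ref{lem:extension} to obtain $\tilde F\colon\mathbb{R}^3\to\mathbb{R}$ with $\Vert F\Vert_{L^\infty}\le\Vert\tilde F\Vert_{L^\infty(\mathbb{R}^3)}$, $\Vert\tilde F\Vert_{L^q(\mathbb{R}^3)}\le C\Vert F\Vert_{L^q}$ for every $q\in(1,\infty)$, and $\Vert\partial_i\tilde F\Vert_{L^{p_i}(\mathbb{R}^3)}\le C\Vert\partial_iF\Vert_{L^{p_i}}$. Abbreviate $M:=\max\{1,\sup_{q\ge 2}\Vert F\Vert_{L^q}/q^\lambda\}$ and $K:=\sum_{i=1}^3(\Vert F\Vert_{L^{p_i}}+\Vert\partial_iF\Vert_{L^{p_i}})$, and assume $M<\infty$ (otherwise the inequality is trivial); then $\Vert\tilde F\Vert_{L^q(\mathbb{R}^3)}\le CMq^\lambda$ for all $q\ge 2$.

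For $N\ge 1$, use a smooth Fourier cutoff to split $\tilde F=P_{\le N}\tilde F+P_{>N}\tilde F$. The classical isotropic Bernstein inequality gives
\[
\Vert P_{\le N}\tilde F\Vert_{L^\infty(\mathbb{R}^3)}\le CN^{3/q}\Vert\tilde F\Vert_{L^q(\mathbb{R}^3)}\le C_1Mq^\lambda N^{3/q}.
\]
The hypothesis $\sum p_i^{-1}<1$ gives a positive gap $\varepsilon:=1-\sum p_i^{-1}>0$ above the anisotropic Sobolev threshold. Combining a dyadic Littlewood--Paley decomposition with an anisotropic Bernstein estimate on each block yields $\Vert\Delta_j\tilde F\Vert_{L^\infty(\mathbb{R}^3)}\le C2^{-\varepsilon j}K$; summing the geometric series over $2^j>N$ gives
\[
\Vert P_{>N}\tilde F\Vert_{L^\infty(\mathbb{R}^3)}\le C_2N^{-\varepsilon}K.
\]
Combining both estimates,
\[
\Vert F\Vert_{L^\infty}\le\Vert\tilde F\Vert_{L^\infty(\mathbb{R}^3)}\le C_1Mq^\lambda N^{3/q}+C_2N^{-\varepsilon}K.
\]
Balancing by $N^{3/q+\varepsilon}=K/(Mq^\lambda)$ and substituting back yields $\Vert F\Vert_{L^\infty}\le CMq^\lambda(K/M)^{(3/q)/(3/q+\varepsilon)}$. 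Taking $q:=\max\{2,(6/\varepsilon)\log(e+K)\}$ forces $(K/M)^{(3/q)/(3/q+\varepsilon)}\le C$, and hence $\Vert F\Vert_{L^\infty}\le C_{p,\lambda,\Omega}M\log^\lambda(e+K)$, which is the claimed inequality (for $K$ bounded the bound follows directly from the case $q=2$).

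The main obstacle is the anisotropic high-frequency estimate: a purely isotropic Bernstein argument does not expose the gap $\varepsilon>0$ produced by $\sum p_i^{-1}<1$, so one has to use either an anisotropic Littlewood--Paley decomposition adapted to the mixed exponents $(p_1,p_2,p_3)$ or a quantitative form of the embedding $W^{1,(p_1,p_2,p_3)}(\mathbb{R}^3)\hookrightarrow L^\infty(\mathbb{R}^3)$. Once the decay $2^{-\varepsilon j}$ on each block is established, the balancing in $N$ and the choice $q\sim\log(e+K)$ are routine.
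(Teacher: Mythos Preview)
Your reduction via the extension lemma matches the paper's first step exactly. After that, however, the paper does not redo any whole-space analysis: it simply invokes the logarithmic Sobolev inequality on $\mathbb{R}^3$ from \cite[Lemma~5.1]{CaoLiTiti2017} as a black box and pulls the resulting estimate back through the extension. The entire Littlewood--Paley split, the Bernstein bound on low frequencies, and the $N$--$q$ optimization you carry out are, from the paper's point of view, content already packaged inside that cited lemma.

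So your route is genuinely different in that it is self-contained: you reconstruct the Brezis--Gallouet--Wainger mechanism rather than citing it. The low-frequency estimate and the balancing are fine. The only place your sketch leaves real work undone is exactly where you flag it: the block estimate $\Vert\Delta_j\tilde F\Vert_{L^\infty}\le C\,2^{-\varepsilon j}K$ with $\varepsilon=1-\sum p_i^{-1}$ does not fall out of a one-line \emph{isotropic} Bernstein argument, because an isotropic block may concentrate in the direction carrying the smallest $p_i$, and Bernstein in $L^{p_i}$ alone then yields the useless exponent $3/p_i-1$. One really needs either the anisotropic Littlewood--Paley decomposition you mention or the quantitative embedding $W^{1,(p_1,p_2,p_3)}(\mathbb{R}^3)\hookrightarrow B^\varepsilon_{\infty,\infty}(\mathbb{R}^3)$, which is classical but has to be supplied. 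Once that ingredient is in hand your argument goes through; the paper's approach buys brevity by outsourcing precisely this step, while yours buys independence from the external reference at the cost of filling it in.
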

\begin{proof}
	By the previous Lemma~\ref{lem:extension}, there exists an extension $\tilde{F}$ of $F$ to the whole space such that\[
	\Vert F\Vert _{L^\infty }\le \Vert \tilde{F}\Vert _{L^\infty},\quad \Vert \tilde{F}\Vert _{L^q(\mathbb{R}^3)}\le C\Vert F\Vert _{L^q },\quad \Vert \partial_i\tilde{F}\Vert _{L^{p_i}(\mathbb{R}^3)}\le C\Vert \partial _iF\Vert _{p_i}.
	\]
	Thus, it follows from the logarithmic Sobolev inequality on the whole space (cf. e.g. \cite[Lemma 5.1]{CaoLiTiti2017}) that\begin{align*}
	&\, \Vert F\Vert _{L^\infty } \le \Vert \tilde{F}\Vert _{L^\infty (\mathbb{R}^3)}\\
	\le &\, C_{p,\lambda}\max\left \{1,\sup_{q\ge 2}\frac{\Vert \tilde{F}\Vert _{L^q(\mathbb{R}^3)}}{q^\lambda}\right \}\log ^\lambda \left (e+\sum^3_{i=1}(\Vert \tilde{F}\Vert _{L^{p_i}(\mathbb{R}^3)}+\Vert \partial_i\tilde{F}\Vert _{L^{p_i}(\mathbb{R}^3)} )\right )\\
	\le &\, C_{p,\lambda ,\Omega}\max\left \{1,\sup_{q\ge 2}\frac{\Vert {F}\Vert _{L^q }}{q^\lambda }\right \}\log ^\lambda \left (e+\sum^3_{i=1}(\Vert {F}\Vert _{L^{p_i}}+\Vert \partial_i{F}\Vert _{L^{p_i}} )\right ),
	\end{align*}finishing the proof.
\end{proof}
Assume from now on that $v$ is a strong  and sufficiently smooth solution to the primitive equations on $(0,T)$ with initial condition $v_0$ and $f\equiv 0$.
On the way of showing, that the $L^q$-norm of the solution grows asymptotically at most of order $\mathcal{O}(\sqrt{q})$ we need to prove that the term $v\cdot\nabla_Hv$ lies in $L^2((0,T),L^2(\Omega ))$. To this end we need estimates on $\norm{\nablah \overline{v}}_{L^2 }$. As initial step, recall that by testing with $v$ one obtains that the energy equality for strong solutions of the primitive equations holds for almost all $t\in (0,T)$
\begin{align}\label{eq:ei}
\norm{v(t)}_{L^2(\Omega)}^2 + 2\int_0^t \norm{\nablah v(s)}_ {L^2(\Omega)}^2 \d\! s =\norm{v(0)}^2_{L^2(\Omega)}.
\end{align}

\begin{lemma}[$L^2$- and $L^4$-estimates]\label{le1_l4_est}
It holds
	\begin{multline*}
	\sup _{0\le t\le {T}}(\Vert \nablah \overline{v}\Vert^2_{L^2 }+\Vert \tilde{v}\Vert^4_{L^4 }) + \int_0^{{T}} \norm{\Deltah \overline{v}}^2_{L^2 } \d\! t +\int_0^{{T}}\Vert \vert \tilde{v}\vert \nabla _H\tilde{v}\Vert _{L^2 }^2\d\! t \\ \le K({T})(1+\Vert \tilde{v}_0\Vert_{L^4 }^4 + \Vert \nablah \overline{v}_0\Vert _{L^2 }^2),
	\end{multline*}
	where $K:[0,\infty )\rightarrow \mathbb{R}$ is a continuously increasing function determined by $h$, $\Vert \tilde{v}_0\Vert_{L^4 }^4$ and $\Vert \nablah \overline{v}_0\Vert^2_{L^2 }$.
\end{lemma}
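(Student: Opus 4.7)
The plan is to work with the barotropic-baroclinic decomposition $v=\overline v+\tilde v$ from \eqref{eq:defvbar}, derive separate evolution equations for $\overline v$ and $\tilde v$, run an $H^1$-estimate on $\overline v$ and an $L^4$-estimate on $\tilde v$, and close a coupled Gr\"onwall argument using the basic energy identity \eqref{eq:ei} as the source of the integrable pre-factor. All manipulations are to be carried out on a smooth Galerkin-type approximation and passed to the limit at the end.

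Taking the vertical average of \eqref{eq:primeqhorvisc}, using $w(\pm h)=0$ and $\partial_z\overline v=0$, the nonlinearities collapse into $\nablah\cdot\overline{v\otimes v}=\overline v\cdot\nablah\overline v+\nablah\cdot\overline{\tilde v\otimes\tilde v}$, so $\overline v$ obeys the two-dimensional Navier--Stokes-type system
\begin{equation*}
\partial_t\overline v-\Deltah\overline v+\overline v\cdot\nablah\overline v+\nablah\cdot\overline{\tilde v\otimes\tilde v}+\nablah\overline p=0,\qquad\nablah\cdot\overline v=0,
\end{equation*}
while subtracting yields a \emph{pressure-free} equation for $\tilde v$ driven by $\overline v\cdot\nablah\tilde v$, $\tilde v\cdot\nablah\overline v$, $\tilde v\cdot\nablah\tilde v+w\,\partial_z\tilde v$ and $-\nablah\cdot\overline{\tilde v\otimes\tilde v}$. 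Testing the first equation against $-\Deltah\overline v$ in $G$, the inertia is controlled by the classical two-dimensional Ladyzhenskaya bound $\tfrac18\|\Deltah\overline v\|_{L^2}^2+C\|\overline v\|_{L^2}^2\|\nablah\overline v\|_{L^2}^4$ and the Reynolds stress by Cauchy--Schwarz together with $\|\nablah\cdot\overline{\tilde v\otimes\tilde v}\|_{L^2(G)}\le C\||\tilde v|\nablah\tilde v\|_{L^2(\Omega)}$, yielding
\begin{equation*}
\tddt\|\nablah\overline v\|_{L^2}^2+\tfrac12\|\Deltah\overline v\|_{L^2}^2\le C\|\overline v\|_{L^2}^2\|\nablah\overline v\|_{L^2}^2\cdot\|\nablah\overline v\|_{L^2}^2+C\||\tilde v|\nablah\tilde v\|_{L^2}^2.
\end{equation*}

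Testing the $\tilde v$-equation against $|\tilde v|^2\tilde v$ produces on the left $\tfrac14\tddt\|\tilde v\|_{L^4}^4+\||\tilde v|\nablah\tilde v\|_{L^2}^2+\tfrac12\|\nablah|\tilde v|^2\|_{L^2}^2$, and two algebraic cancellations eliminate most of the nonlinearities: $\langle\overline v\cdot\nablah\tilde v,|\tilde v|^2\tilde v\rangle=0$ because $\nablah\cdot\overline v=0$ and $\overline v|_{\Gamma_l}=0$, and $\langle\tilde v\cdot\nablah\tilde v+w\,\partial_z\tilde v,|\tilde v|^2\tilde v\rangle=0$ by integration by parts together with $\nablah\cdot\tilde v=-\partial_z w$, $w(\pm h)=0$ and $\tilde v|_{\Gamma_l}=0$, mirroring the usual energy cancellation for the full momentum equation. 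The surviving contributions $\langle\tilde v\cdot\nablah\overline v,|\tilde v|^2\tilde v\rangle$ and $\langle\nablah\cdot\overline{\tilde v\otimes\tilde v},|\tilde v|^2\tilde v\rangle$ are estimated by H\"older and two-dimensional Gagliardo--Nirenberg applied slice-by-slice in $z$, yielding bounds of the form $\varepsilon\bigl(\|\Deltah\overline v\|_{L^2}^2+\||\tilde v|\nablah\tilde v\|_{L^2}^2\bigr)+C_\varepsilon\bigl(1+\|\nablah\overline v\|_{L^2}^2\bigr)\bigl(1+\|\tilde v\|_{L^4}^4\bigr)$. Summing an appropriately weighted combination of the two differential inequalities and absorbing the small parabolic terms on the left, one obtains for $f(t):=\|\nablah\overline v\|_{L^2}^2+\|\tilde v\|_{L^4}^4$ an inequality
\begin{equation*}
f'(t)+\tfrac14\bigl(\|\Deltah\overline v\|_{L^2}^2+\||\tilde v|\nablah\tilde v\|_{L^2}^2\bigr)\le\Psi(t)\bigl(1+f(t)\bigr),
\end{equation*}
where $\Psi(t):=C\bigl(1+\|\overline v\|_{L^2}^2\|\nablah\overline v\|_{L^2}^2+\|\nablah\overline v\|_{L^2}^2\bigr)$ lies in $L^1(0,T)$ with norm controlled, by \eqref{eq:ei}, solely in terms of $h$ and $\|v_0\|_{L^2}$. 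The linear Gr\"onwall lemma then delivers the asserted bound with $K(T)$ increasing in $T$.

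The main obstacle is the cross term $\langle\tilde v\cdot\nablah\overline v,|\tilde v|^2\tilde v\rangle$: in the absence of vertical viscosity it cannot be absorbed by any $\partial_{zz}$-dissipation and must instead be split anisotropically, using two-dimensional Gagliardo--Nirenberg in $(x,y)$ on horizontal slices combined with $L^\infty_z$- or $L^2_z$-control in the vertical direction and exploiting that $\overline v$ has no $z$-dependence. This is exactly what forces the pair $(\|\nablah\overline v\|_{L^2}^2,\|\tilde v\|_{L^4}^4)$, rather than any pure $H^1$-norm of $v$, to be the correct conserved quantity in this partial-viscosity regime.
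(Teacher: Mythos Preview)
Your plan is the same as the paper's: split $v=\overline v+\tilde v$, run an $H^1$-estimate on $\overline v$ and an $L^4$-estimate on $\tilde v$, identify the same two surviving cross terms after the standard cancellations, and close by a coupled Gr\"onwall argument with the prefactor controlled by the basic energy identity \eqref{eq:ei}. The structure and the estimates you sketch are essentially those of the paper (which in turn refers to the $I_7$, $I_8$ estimates of \cite{HieberKashiwabara2015}).

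There is, however, one genuine point you have glossed over, and it is precisely the place where the Dirichlet lateral boundary conditions differ from the periodic setting of \cite{CaoLiTiti2017}. When you ``test the first equation against $-\Deltah\overline v$ in $G$'', the pressure term $\langle\nablah p,-\Deltah\overline v\rangle_G$ does \emph{not} vanish: although $\divh\Deltah\overline v=0$, the boundary integral $\int_{\partial G} p\,(\Deltah\overline v\cdot n)$ survives because $\Deltah\overline v$ need not have zero normal trace. The paper therefore tests with $-\PP_G\Deltah\overline v$ (equivalently, applies the $2$D Stokes operator $A\overline v$), which is divergence-free with vanishing normal component, so the pressure drops. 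This produces $\|\PP_G\Deltah\overline v\|_{L^2}^2$ on the left rather than $\|\Deltah\overline v\|_{L^2}^2$; the full Laplacian is then recovered through the $H^2$-regularity of the $2$D Stokes operator, $\|\nablah^2\overline v\|_{L^2}\le C\|\PP_G\Deltah\overline v\|_{L^2}$ for $\overline v\in H^2(G)\cap H^1_0(G)\cap\ls(G)$. You should make this correction; without it, your $\overline v$-estimate is not justified in the bounded-domain setting. A secondary remark: in your Gr\"onwall prefactor $\Psi$, the estimate of the Reynolds-stress term $\langle\nablah\cdot\overline{\tilde v\otimes\tilde v},|\tilde v|^2\tilde v\rangle$ naturally produces $\|\nablah\tilde v\|_{L^2}^2$ (or $\|\nablah v\|_{L^2}^2$) rather than $\|\nablah\overline v\|_{L^2}^2$ alone; this does not affect integrability, since both are controlled by \eqref{eq:ei}, but you should write the correct prefactor.
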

\begin{proof}
	We shall only give a sketch of the proof. Recall, the momentum equation of the problem splits into an equation for $\overline{v}$ on $G$
	\begin{equation} \label{eq:bar}
	\begin{array}{rll}
	\partial_t \bar v - \Deltah \bar v + \nabla_Hp &=-\bar v\cdot\nabla_H\bar v - \frac{1}{2h} \int_{-h}^{h} (\tilde v\cdot\nabla_H\tilde v + (\mathrm{div}_H\, \tilde v)\,\tilde v)\,\d\! z,  &  \\
	\mathrm{div}_H\,\bar v &= 0, &  \\
	\bar v(0) &= \bar v_0, 
	\end{array}
	\end{equation}
	and an equation for $\tilde{v}$ on $\Omega$
	\begin{equation}\label{eq:tilde}
	\begin{array}{rll}
	\partial_t\tilde v - \Deltah\tilde v    = & - \tilde v\cdot\nabla_H\tilde v - w(\tilde v)\tilde{v}_z - \bar v\cdot\nabla_H\tilde v
	- \tilde v\cdot\nabla_H\bar v  \\
		& + \frac{1}{2h} \int_{-h}^h (\tilde v\cdot\nabla_H\tilde v + (\mathrm{div}_H\,\tilde v)\,\tilde v)\,\d\! z,  &  \\
	\tilde v(0)  = & \tilde{v}_0.
	\end{array}
	\end{equation}
	Similarly to \cite[Section 6, Step 1]{HieberKashiwabara2015}, one first multiplies \eqref{eq:bar} by $\PP_G \Deltah \overline{v}$, and then integrates over $G$. When integrating by parts the pressure gradient vanishes, and applying a compensation argument yields
	\begin{multline*}
	\tfrac{1}{2}\tddt\norm{\nablah\overline{v}}^2_{L^2(G)} +  \norm{\PP_G \Deltah \overline{v}}^2_{L^2(G)}  
	=  
	\\ \int_{\Omega} \left(-\bar v\cdot\nabla_H\bar v - \frac{1}{2h} \int_{-h}^h (\tilde v\cdot\nabla_H\tilde v + (\mathrm{div}_H\, \tilde v)\,\tilde v)\,\d\! z  \right)\PP_G \Deltah \overline{v}\d (x,y,z)
	\\
	\leq 
	C \norm{|\overline{v}|\nablah \overline{v} }^2_{L^2(G)}
	+ 4 \norm{|\tilde{v}|\cdot |\nablah \tilde{v}| }^2_{L^2(\Omega)}
	+ \frac{1}{4}\norm{\PP_G \Deltah \overline{v}}^2_{L^2(G)} 
	\\
	\leq 
	C\norm{\overline{v}}_{L^4(G)}^2\norm{\nablah\overline{v}}_{L^4(G)}^2
	+ 4 \norm{|\tilde{v}|\nablah \tilde{v} }^2_{L^2(\Omega)}
	+ \frac{1}{4}\norm{\PP_G \Deltah \overline{v}}^2_{L^2(G)} \\
	\leq
	C\norm{\overline{v}}^2_{L^2(G)}\norm{\nablah v}^2_{L^2(\Omega)} \norm{\nablah\overline{v}}_{L^2(G)}^2 
	+
	\frac{1}{2}\norm{\PP_G \Deltah\overline{v}}^2_{L^2(G)}
	+ 
	4\norm{|\tilde{v}|\nablah \tilde{v} }^2_{L^2(\Omega)},
	\end{multline*}
	where one uses H\"older's and Young's inequality along with Ladyzhenskaya's inequality and ellipticity of the $2$-D Stokes operator  $\PP_G \Deltah$ with domain contained in $H^2(G)^2$. Note that $C>0$ depends only on $h$. 
	Hence,
	\begin{multline}\label{eq:H1bar}
	\norm{\nablah\overline{v}(t)}^2_{L^2(G)} + \int_0^t \norm{\PP_G \Delta \overline{v}(s)}^2_{L^2(G)}\d\! s \leq  \norm{\nablah\overline{v}(0)}^2_{L^2(G)}  \\ +
	C \int_0^t \norm{\overline{v}}_{L^2(G)}^2\norm{\nablah v}^2_{L^2(\Omega)} \norm{\nablah\overline{v}}_{L^2(G)}^2 \d\! s 
	+8 \int_0^t \norm{|\tilde{v}|\nablah \tilde{v} }^2_{L^2(\Omega)}\d\! s .
	\end{multline}	
Next one follows \cite[Section 6, Step 3]{HieberKashiwabara2015}, i.e., testing \eqref{eq:tilde} with $\tilde{v}\vert \tilde{v}\vert^2$ yields
	\begin{multline*}
	\frac{1}{4}\ddt\Vert \tilde{v}\Vert _{L^4 }^4 +\int _\Omega\vert \tilde{v}\vert ^{2} \left[\vert \nabla _H \tilde{v}\vert ^2+2\big\vert \nabla _H\vert \tilde{v}\vert \big\vert ^2\right]\d (x,y,z)=
	-\int _\Omega  \tilde{v}\cdot \nablah \overline{v} \cdot \tilde{v}\vert \tilde{v}\vert ^2 \d (x,y,z)\\
	+\int _\Omega \frac{1}{2h} \int_{-h}^h (\tilde v\cdot\nabla_H\tilde v + (\mathrm{div}_H\,\tilde v)\,\tilde v)\,\d\! z \tilde{v}\vert \tilde{v}\vert ^2\d (x,y,z).
	\end{multline*}
	Analogously to 	\cite[Section 6, Step 3, estimates on $I_7$ and $I_8$]{HieberKashiwabara2015} one obtains
	\begin{align*}
	\int _\Omega  \tilde{v}\cdot \nablah \overline{v} \cdot \tilde{v}\vert \tilde{v}\vert ^2 \d (x,y,z)\leq 
	C\|\nabla_H\bar v\|_{L^2(G)}^2 \|\tilde v\|_{L^4(\Omega)}^4 + \frac14\big\|\nabla_H|\tilde v|^2 \big\|_{L_2(\Omega)}^2.
	\end{align*}
	and
	\begin{multline*}
	\int _\Omega \frac{1}{2h} \int_{-h}^h (\tilde v\cdot\nabla_H\tilde v + (\mathrm{div}_H\,\tilde v)\,\tilde v)\,\d\! z \tilde{v}\vert \tilde{v}\vert ^2 \d (x,y,z) \\
	\leq 
	C\|\nabla_H  v\|_{L^2(G)}^2 \|\tilde v\|_{L^4(\Omega)}^4 + \frac14\big\|\nabla_H|\tilde v|^2 \big\|_{L_2(\Omega)}^2.
	\end{multline*}
	Hence,
	\begin{multline}\label{eq:L4tilde}
	\Vert \tilde{v}(t)\Vert _{L^4 }^4 + 2\int_0^t \norm{\vert \tilde{v}\vert \vert \nabla _H \tilde{v}\vert}_{L^2}^2+  \norm{\vert \tilde{v}\vert \nabla _H\vert \tilde{v}\vert }_{L^2 }^2 \d\! s \\ \leq
	\Vert \tilde{v}_0\Vert _{L^4 }^4 + C \int_0^t 
	\|\nabla_H  v\|_{L^2(G)}^2 \|\tilde v\|_{L^4(\Omega)}^4\d\! s.
	\end{multline}
	Adding now \eqref{eq:L4tilde} to $\frac{1}{16}$-times \eqref{eq:H1bar} gives with $\norm{\overline{v}(t)}^2_{L^2(G)}\leq \norm{v_0}^2_{L^2(\Omega)}$
	\begin{multline*}
	\Vert \tilde{v}(t)\Vert _{L^4 }^4 + \norm{\nablah\overline{v}}^2_{L^2(G)}
	+\int_0^{t} \norm{\PP \Deltah \overline{v}}^2_{L^2 }\d\! s +\int_0^{t}\Vert \vert \tilde{v}\vert \nabla _H\tilde{v}\Vert _{L^2 }^2 \d\! s
	 \\ \leq\Vert \tilde{v}(0)\Vert _{L^4 }^4 +8 \norm{\nablah\overline{v}(0)}^2_{L^2(G)}  \\ 
	+ C \int_0^t (\norm{v_0}^2_{L^2(\Omega)}+1)
	\norm{\nablah v}^2_{L^2(\Omega)} (\norm{\nablah\overline{v}}^2_{L^2(G)} + \norm{\tilde{v}}^4_{L^4 })\d\! s.
	\end{multline*}
	Using Gr\"onwall's inequality the claim follows with \\ $K(T)=(\Vert \tilde{v}(0))\Vert _{L^4 }^4+8\Vert \nabla _H\overline{v}(0)\Vert _{L^2(G)})K'(T)$, where
	\begin{align*}
	K'({T})=e^{C \max\{1,\norm{v(0)}^2_{L^2(\Omega)}\}\int_0^{T}\norm{\nablah v}^2_{L^2(\Omega)}\d\! s} \leq 
	e^{C \max\{1,\norm{v_0}^2_{L^2(\Omega)}\}\norm{v(0)}^2_{L^2(\Omega )}}. \quad\qedhere 
	\end{align*}
\end{proof}
Now, using the decomposition $v=\overline{v}+\tilde{v}$ one shows the integrability of
\begin{align*}
\Vert \divh \overline{v\otimes v} \Vert _{L^2 }^2
\leq C\Vert \vert \tilde{v}\vert \nabla _H\tilde{v}\Vert _{L^2 }^2
+C\Vert \vert \overline{v}\vert \nabla _H\overline{v}\Vert _{L^2 }^2,
\end{align*}
where the first addend is integrable by Lemma~\ref{le1_l4_est}, and
\begin{align*}
\Vert \vert \overline{v}\vert \nabla _H\overline{v}\Vert _{L^2 }^2
&\leq \norm{\overline{v}}_{\infty}^2   \norm{\nabla _H\overline{v}}_{2}^2
\leq \norm{\overline{v}}_{H^2(G)}^2\norm{\nabla _H\overline{v}}_{2}^2.
\end{align*}
Hence, Lemma~\ref{le1_l4_est} implies the following corollary.
\begin{corollary}[$L^4$-estimate]\label{le_l4_est}
	It holds
	\begin{align}
	\int_0^{{T}}\Vert \divh \overline{v\otimes v} \Vert _{L^2 }^2\d\! t\le K({T})(1+\Vert \tilde{v}_0\Vert_{L^4 }^4 + \Vert \nablah \overline{v}_0\Vert _{L^2 }^2)
	\end{align}
	where $K:[0,\infty )\rightarrow \mathbb{R}$ is a continuously increasing function determined by $h$, $\Vert \tilde{v}_0\Vert_{L^4 }^4$ and $\Vert \nablah \overline{v_0}\Vert^2_{L^2 }$.
\end{corollary}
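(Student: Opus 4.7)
The plan is to exploit the barotropic-baroclinic splitting $v=\overline{v}+\tilde{v}$ that was set up in Lemma~\ref{le1_l4_est}, noting that the preceding discussion already supplies the pointwise bound
\begin{align*}
\Vert \divh \overline{v\otimes v}\Vert_{L^2}^2 \leq C\Vert |\tilde{v}|\nablah\tilde{v}\Vert_{L^2}^2 + C\Vert |\overline{v}|\nablah\overline{v}\Vert_{L^2}^2,
\end{align*}
which follows from expanding $v\otimes v$ into four tensor products, using $\divh\overline{v}=0$, and applying the triangle inequality together with Cauchy--Schwarz. The first term on the right integrates in time to at most $K(T)(1+\Vert\tilde{v}_0\Vert_{L^4}^4+\Vert\nablah\overline{v}_0\Vert_{L^2}^2)$ directly from Lemma~\ref{le1_l4_est}, so the only remaining work is to integrate the barotropic contribution.

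For the barotropic term my first step is to use that $G\subset\R^2$ is smooth and $\overline{v}$ satisfies a homogeneous Dirichlet condition on $\partial G$, so the Sobolev embedding $H^2(G)\hookrightarrow L^\infty(G)$ gives $\Vert\overline{v}(t)\Vert_{L^\infty(G)}\leq C\Vert\overline{v}(t)\Vert_{H^2(G)}$. Consequently
\begin{align*}
\int_0^T\Vert |\overline{v}|\nablah\overline{v}\Vert_{L^2}^2\,\dt \leq 2h\,\Vert\nablah\overline{v}\Vert_{L^\infty_tL^2(G)}^2\int_0^T\Vert\overline{v}(t)\Vert_{H^2(G)}^2\,\dt.
\end{align*}
The $L^\infty$-in-time bound on $\nablah\overline{v}$ is contained in Lemma~\ref{le1_l4_est}. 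For the $L^2_tH^2$ bound I invoke elliptic regularity for the two-dimensional Stokes operator $\PP_G\Deltah$ with domain $H^2(G)^2\cap H^1_0(G)^2\cap\ls(G)$: it gives $\Vert\overline{v}\Vert_{H^2(G)}\leq C(\Vert\PP_G\Deltah\overline{v}\Vert_{L^2(G)}+\Vert\overline{v}\Vert_{L^2(G)})$. Integrating in time, the first contribution is controlled by Lemma~\ref{le1_l4_est}, while the $L^2$ piece is absorbed by the basic energy identity \eqref{eq:ei}, which yields $\Vert\overline{v}\Vert_{L^\infty_tL^2(G)}\leq C\Vert v_0\Vert_{L^2(\Omega)}$.

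Collecting these estimates delivers a bound of the required form, with $K(T)$ built by multiplying the $L^\infty_t$ quantity $\Vert\nablah\overline{v}\Vert_{L^\infty_tL^2}^2$ with the $L^2_t$-integrated $H^2$-norm of $\overline{v}$, and then adding the $\tilde{v}$ contribution; both resulting factors are already given as continuously increasing functions $K(T)$ of the stated data by Lemma~\ref{le1_l4_est}, so their product retains this monotone dependence.

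No step looks genuinely hard: the potential pitfall is a purely bookkeeping one, namely verifying that the $2$-D Stokes operator's $H^2$-regularity estimate can be legitimately applied to $\overline{v}(t,\cdot)$ for a.e.\ $t$ (using $\divh\overline{v}=0$ and $\overline{v}|_{\partial G}=0$ for a.e.\ $t$, both of which are preserved in the limits taken during the construction of strong solutions). Once that is in place, the $L^\infty_tL^2$ bound on $\nablah\overline{v}$ is exactly the supremum term in Lemma~\ref{le1_l4_est}, and the product structure of the estimate produces the advertised dependence on $h$, $\Vert\tilde{v}_0\Vert_{L^4}^4$, and $\Vert\nablah\overline{v}_0\Vert_{L^2}^2$.
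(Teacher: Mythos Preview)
Your proposal is correct and takes essentially the same approach as the paper: split via $v=\overline{v}+\tilde{v}$, handle the baroclinic piece directly by Lemma~\ref{le1_l4_est}, and for the barotropic piece use $\Vert|\overline{v}|\nablah\overline{v}\Vert_{L^2}^2\le\Vert\overline{v}\Vert_{H^2(G)}^2\Vert\nablah\overline{v}\Vert_{L^2}^2$ via the embedding $H^2(G)\hookrightarrow L^\infty(G)$ together with the $L^\infty_t$ and $L^2_t$ bounds from Lemma~\ref{le1_l4_est}. The paper's own argument is the terse three-line discussion placed immediately before the corollary; your version simply makes the Stokes $H^2$-regularity step and the lower-order $L^2$ term explicit.
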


\begin{proposition}[$L^q$-estimate]\label{prop_lq_est}
	Let $q\ge 4,$ then it holds \[
	\sup _{0\le t\le {T}}\Vert \tilde{v}\Vert _{L^q }\le K_1({T})(1+\Vert \tilde{v}_0\Vert _{L^q })\sqrt{q},
	\]where $K_1:[0,\infty )\rightarrow \mathbb{R}$ is a continuously increasing function determined by $h$, $\Vert v_0\Vert_{L^4 }^4$ and $\Vert \nablah \overline{v_0}\Vert^2_{L^2 }$.
\end{proposition}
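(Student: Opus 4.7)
The strategy is an $L^q$-energy estimate on the baroclinic component $\tilde v$, with the $\sqrt{q}$-growth tracked via the two-dimensional sharp Sobolev embedding $\|u\|_{L^q(G)}\le C\sqrt{q}\,\|u\|_{H^1(G)}$ on the planar base $G$. Test the equation \eqref{eq:tilde} by $|\tilde v|^{q-2}\tilde v$ and integrate over $\Omega$. The primitive-equation cancellations (nonlinearities $\tilde v\cdot\nablah\tilde v + w(\tilde v)\tilde v_z$ cancel after integration by parts in $z$ using $w(\pm h)=0$, while $\overline v\cdot\nablah\tilde v$ contributes zero since $\divh\overline v=0$) reduce the identity to
\[
\frac{1}{q}\ddt\|\tilde v\|_{L^q(\Omega)}^q + \frac{4(q-1)}{q^2}\bigl\|\nablah|\tilde v|^{q/2}\bigr\|_{L^2(\Omega)}^2 \le |R_1| + |R_2|,
\]
where $R_1 := -\int_\Omega(\tilde v\cdot\nablah\overline v)\cdot\tilde v|\tilde v|^{q-2}\,dx$ is the baroclinic--barotropic coupling and $R_2 := -\int_\Omega \divh(\overline{\tilde v\otimes\tilde v})\cdot|\tilde v|^{q-2}\tilde v\,dx$ is the vertical-average correction.

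By H\"older and the $z$-independence of $\overline v$, one gets $|R_1|\le (2h)^{1/q}\|\tilde v\|_{L^q(\Omega)}^{q-1}\|\nablah\overline v\|_{L^q(G)}\le C\sqrt{q}\,\|\tilde v\|_{L^q(\Omega)}^{q-1}\|\PP_G\Deltah\overline v\|_{L^2(G)}$, where the 2D sharp embedding and ellipticity of the 2D Stokes operator on $H^2_0(G)\cap\ls(G)$ have been used. An analogous bound for $R_2$ follows by splitting $\overline{\tilde v\otimes\tilde v} = \overline{v\otimes v} - \overline v\otimes\overline v$ and applying Corollary~\ref{le_l4_est} to the first piece and Lemma~\ref{le1_l4_est} (via the 2D embedding) to the second, possibly absorbing a portion into the dissipation $\|\nablah|\tilde v|^{q/2}\|_{L^2}^2$. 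Combining the bounds, using the chain rule $\ddt\|\tilde v\|_{L^q}^q = q\|\tilde v\|_{L^q}^{q-1}\ddt\|\tilde v\|_{L^q}$, and dividing by $q\|\tilde v\|_{L^q}^{q-1}$ (regularizing to handle zeros) yields
\[
\ddt\|\tilde v(t)\|_{L^q(\Omega)} \le C\sqrt{q}\,F(t),\qquad F(t):=\|\PP_G\Deltah\overline v(t)\|_{L^2(G)} + \|\divh(\overline{v\otimes v})(t)\|_{L^2(G)} + \|\overline v(t)\|_{H^2(G)}^2.
\]
By Lemma~\ref{le1_l4_est}, Corollary~\ref{le_l4_est}, and Cauchy--Schwarz in time, $\int_0^T F(t)\,dt \le K(T)$ where $K$ is determined only by $h$, $\|v_0\|_{L^4}^4$, and $\|\nablah\overline{v_0}\|_{L^2}^2$. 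Integrating and using $\sqrt{q}\ge 1$ for $q\ge 4$ yields $\sup_{[0,T]}\|\tilde v\|_{L^q} \le \|\tilde v_0\|_{L^q} + CK(T)\sqrt{q} \le K_1(T)(1+\|\tilde v_0\|_{L^q})\sqrt{q}$.

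\textbf{Main obstacle.} The decisive observation is that the $\sqrt{q}$-scaling comes from the two-dimensional nature of $G$, applied to quantities depending only on the horizontal variables (such as $\overline v$ and $\overline{\tilde v\otimes\tilde v}$). Without the primitive-equation cancellations, the remaining nonlinearities would introduce additional $q$-factors not absorbable by the horizontal dissipation alone. Bounding $R_2$ is the most technical step: the coefficient $\overline{\tilde v\otimes\tilde v}$ lies only in $L^\infty_t L^2(G)$ with divergence merely in $L^2_t L^2(G)$, so careful splitting into $\overline v\otimes\overline v$ plus the residual term, combined with the interplay of H\"older, 2D Sobolev and dissipation absorption, is needed to match the $\sqrt{q}$-scaling achieved for $R_1$.
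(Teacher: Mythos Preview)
Your H\"older bound for $R_1$ is wrong. Pointwise, $(\tilde v\cdot\nablah\overline v)\cdot\tilde v|\tilde v|^{q-2}$ is controlled by $|\tilde v|^{q}|\nablah\overline v|$, not $|\tilde v|^{q-1}|\nablah\overline v|$: there are $q$ factors of $\tilde v$, not $q-1$. Hence the claimed estimate $|R_1|\le (2h)^{1/q}\|\tilde v\|_{L^q(\Omega)}^{q-1}\|\nablah\overline v\|_{L^q(G)}$ is not justified, and your whole $\sqrt q$-mechanism via the sharp 2D Sobolev embedding on $\nablah\overline v$ collapses. If you instead try to absorb into the dissipation (writing $|R_1|\le C\|\nablah\overline v\|_{L^2(G)}\,\||\tilde v|^{q/2}\|_{L^2}\,\|\nablah|\tilde v|^{q/2}\|_{L^2}$ via horizontal Ladyzhenskaya, then Young with weight matching the dissipation coefficient $\tfrac{4(q-1)}{q^2}$), you produce a Gr\"onwall factor that is \emph{exponential} in $q$, not the required $\sqrt q$. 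The same miscount afflicts your treatment of $R_2$, which you leave essentially unargued.

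The paper's proof is structurally different and avoids this obstruction. It tests the \emph{full} $v$-equation by $|v|^{q-2}v$, so that the only surviving right-hand side is the pressure term $I=-\int_\Omega\nablah p\cdot v|v|^{q-2}$; the 2D nature of $p$ (hydrostatic balance) is then exploited together with Lemma~\ref{lem:2DStokes} to bound $\|\nablah p\|_{L^2(G)}$ by $\|\divh\overline{v\otimes v}\|_{L^2(G)}+\|\Deltah\overline v\|_{L^2(G)}$, quantities controlled by Lemma~\ref{le1_l4_est} and Corollary~\ref{le_l4_est}. The $\sqrt q$ does \emph{not} come from a sharp 2D Sobolev embedding; rather, careful estimation of $I$ (with part absorbed into the dissipation) yields an inequality of the form
\[
\tddt\bigl(q+1+\|v\|_{L^q}^2\bigr)\le C\bigl(1+\|v\|_{L^2}^2\bigr)\bigl(1+\|\divh\overline{v\otimes v}\|_{L^2(G)}^2+\|\Deltah\overline v\|_{L^2(G)}^2\bigr)\bigl(q+1+\|v\|_{L^q}^2\bigr),
\]
with $C$ independent of $q$. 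Gr\"onwall then gives $\|v\|_{L^q}^2\le K_1^2(T)(q+1+\|v_0\|_{L^q}^2)$, i.e.\ the $\sqrt q$ arises from the \emph{additive} $q$ in the bracket. To repair your approach you would need an estimate on $R_1+R_2$ of this same shape; your current bound does not deliver it.
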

For the proof we need the following estimate on the pressure.
\begin{lemma}[Estimate on the pressure for the $2$-D Stokes equations] \label{lem:2DStokes}
	Let $\overline{v},p$ be a solution to the two-dimensional Stokes equations
	\begin{align*}
	\partial_t \overline{v}   - \Deltah \overline{v} + \nablah p =  \overline{f}, \quad \divh\overline{v}=0.
	\end{align*}
	Then for almost every $s\in (0,T)$
	\begin{align*}
	\displaystyle{\Vert \nablah p(s)\Vert_{L^2} \le ||\overline{ f}(s)||_{L^2} + ||\Deltah\overline{v}(s)||_{L^2}}.
	\end{align*}
	provided each term is finite.
\end{lemma}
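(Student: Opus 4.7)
The plan is to exploit the Helmholtz decomposition on $G$. Let $\PP_G \colon L^2(G)^2 \to \ls(G)$ denote the classical Helmholtz projection on the two-dimensional domain $G$, so that $I - \PP_G$ is the orthogonal projection onto the space of $L^2$-gradients. Both projections have operator norm one. Since $G$ is smooth, every function in the range of $I-\PP_G$ is of the form $\nablah q$ for some $q\in H^1(G)$, and conversely every such $\nablah q \in L^2(G)^2$ lies in the range.

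Apply $I - \PP_G$ to the momentum equation. Because $\overline{v}(s)$ is solenoidal for almost every $s$, so is $\partial_t \overline{v}(s)$ (as a distributional time derivative of a curve in $\ls(G)$), and hence $(I-\PP_G)\partial_t \overline{v}(s) = 0$. On the other hand, $\nablah p(s)$ lies in the range of $I - \PP_G$ by construction, so $(I-\PP_G)\nablah p(s) = \nablah p(s)$. Applying the projection to the equation thus yields, for almost every $s\in (0,T)$,
\begin{equation*}
\nablah p(s) = (I - \PP_G)\bigl(\overline{f}(s) + \Deltah \overline{v}(s)\bigr).
\end{equation*}

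Taking the $L^2(G)$-norm and using that $I-\PP_G$ is an orthogonal projection (in particular non-expansive on $L^2$), we conclude
\begin{equation*}
\|\nablah p(s)\|_{L^2} \le \|\overline{f}(s) + \Deltah \overline{v}(s)\|_{L^2} \le \|\overline{f}(s)\|_{L^2} + \|\Deltah \overline{v}(s)\|_{L^2},
\end{equation*}
which is the claimed bound. The estimate is meaningful precisely on the set of $s$ where each term on the right is finite, which is assumed in the statement.

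I expect no essential obstacle beyond minor bookkeeping: one should briefly justify that $\partial_t \overline{v}$ remains solenoidal (which follows by taking the distributional divergence and commuting it with $\partial_t$) and that the orthogonality of the Helmholtz decomposition in $L^2(G)^2$ applies here despite the boundary of $G$ (which is guaranteed by the smoothness hypothesis on $G$). No interpolation or hard estimate is required.
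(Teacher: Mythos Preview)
Your proof is correct and follows essentially the same approach as the paper: both apply the complementary Helmholtz projection $I-\PP_G$ to the momentum equation to obtain $\nablah p = (I-\PP_G)(\overline{f}+\Deltah\overline{v})$ and then use that an orthogonal projection has operator norm one. Your version is slightly more explicit in justifying why $\partial_t\overline{v}$ is annihilated by $I-\PP_G$.
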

\begin{proof}
	Applying the complement of the $2$-D Helmholtz projection $\PP_G$, i.e., $\mathds{1}-\PP_G$ to the Stokes equations gives
	\begin{align*}
	\nablah p = (\mathds{1}-\PP_G)(\overline{f}(s)+\Deltah \overline{v}).
	\end{align*}
	Since this is an orthogonal projection in $L^2(G)^2$ one has $\norm{\overline{g}}_{L^2(G)}^2 = \norm{\PP_G \overline{g}}_{L^2(G)}^2+\norm{(\mathds{1}-\PP_G )\overline{g}}_{L^2(G)}^2$ for any $g\in L^2(G)^2$, and hence the claim follows.
\end{proof}

\begin{proof}[Proof of Proposition~\ref{prop_lq_est}]
	We shall only give a sketch of the proof. Recall the momentum equation of the problem\[
	\partial_tv-\Delta_Hv=-\nabla_Hp-\left [(v\cdot\nabla_H)v-\left (\int_{-h}^z\nabla_H\cdot v\d\! z\right )\partial_zv\right ].\]
	Multiplying the above equation by $v\vert v\vert ^{q-2}$, integrating over $\Omega $ yields by integration by parts
	\begin{multline}
	\frac{1}{2}\Vert v\Vert _{L^q }^{q-2}\ddt\Vert v\Vert _{L^q }^2 +\int _\Omega\vert v\vert ^{q-2} \left[\vert \nabla _H v\vert ^2+(q-2)\big\vert \nabla _H\vert v\vert \big\vert ^2\right]\d (x,y,z) \\
	=-\int _\Omega \nabla _Hp\cdot \vert v\vert ^{q-2}v\d (x,y,z)=:I.
	\end{multline}
	Using a series of standard integral inequalities, one can show 
	\begin{multline*}
	I\le C(1+\Vert \nabla _Hp\Vert _{L^2(G)}^2)(1+\Vert v\Vert ^2_{L^2 })\left(q\Vert v\Vert _{L^q }^{q-2}+\Vert v\Vert _{L^q }^{\tfrac{q(q-2)}{q-1}}\right)\\
	+(q-2)\left\Vert \vert v\vert ^{\tfrac{q}{2}-1}\nabla_H\vert v\vert \right\Vert _{L^2 }^2,
	\end{multline*}
	where $C>0$ is independent of $q$. Note that from \eqref{eq:bar} and Lemma~\ref{lem:2DStokes} it follows that
	\begin{align*}
	\Vert \nablah p\Vert _{L^2(G)}^2 \leq \Vert \divh\overline{v\otimes v}\Vert _{L^2(G)}^2 +  \Vert \Deltah \overline{v}\Vert _{L^2(G)}^2.
	\end{align*}
	Combining this with the above we end up with
	\begin{multline*}
	\tddt(q+1+\Vert v\Vert_{L^q }^2)=\tddt\Vert v\Vert_{L^q }^2 \\ 
	\le C(1+\Vert v\Vert ^2_{L^2 })(1+\Vert \divh\overline{v\otimes v}\Vert _{L^2(G)}^2+ \Vert \Deltah \overline{v}\Vert _{L^2(G)}^2)\left(q+1+\Vert v\Vert _{L^q }^2\right).
	\end{multline*}
	The Gr\" onwall inequality implies now
	\begin{align*}
	\sup_{0\le t\le {T}}\Vert v\Vert _{L^q }^2\le K_1^2({T})(q+1+\Vert v_0\Vert _{L^q }^2)\le K_1^2({T})(1+\Vert v_0\Vert _{L^q }^2)q,
	\end{align*}
	where $K_1^2({T})=e^{C\int_0^{T}(1+\Vert v\Vert ^2_{L^2 })(1+\Vert \divh\overline{v\otimes v}\Vert _{L^2(G)}^2)+ \Vert \Deltah \overline{v}\Vert _{L^2(G)}^2\d\! s}$, which is finite due to Corollary~\ref{le_l4_est}.
	
\end{proof}

\begin{proposition}[$L^q$-estimate for $\partial _zv$]\label{prop_vert_fo_est}
	For $q\in [2,\infty )$ it holds
	\[\ddt\Vert \partial _zv\Vert _{L^q }^q+\int _\Omega \vert \partial _zv\vert ^{q-2}\vert \nabla _H\partial _zv\vert ^2\d (x,y,z)\le C_q\left( \Vert v\Vert _{\infty }+1\right)\left( \Vert \partial _zv\Vert _{L^q }^q+1\right).
	\]
\end{proposition}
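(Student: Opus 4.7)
The plan is to differentiate the horizontal momentum equation in \eqref{eq:primeqhorvisc} with respect to $z$, test the resulting equation for $v_z := \partial_z v$ against $\vert v_z\vert^{q-2}v_z$, and exploit an algebraic cancellation in the transport-type terms together with the boundary conditions on $v$ and $w$. Applying $\partial_z$ to $\partial_t v + v\cdot\nablah v + w\partial_z v - \Deltah v + \nablah p = 0$ and using $\partial_z p = 0$ as well as $\partial_z w = -\divh v$ yields
\begin{align*}
\partial_t v_z - \Deltah v_z = -(v_z\cdot\nablah)v - (v\cdot\nablah)v_z + (\divh v)\,v_z - w\,\partial_{zz}v.
\end{align*}
Taking the $L^2$-pairing with $\vert v_z\vert^{q-2}v_z$, and integrating by parts in the diffusion term (legitimate since $v\vert_{\Gamma_l}=0$ forces $v_z\vert_{\Gamma_l}=0$), produces
\begin{align*}
\tfrac{1}{q}\tddt\Vert v_z\Vert_{L^q}^q + (q-1)\int_\Omega \vert v_z\vert^{q-2}\vert\nablah v_z\vert^2\,\d(x,y,z) = J_1+J_2+J_3+J_4,
\end{align*}
where $J_1,\dots,J_4$ denote the four convective contributions in the order written above.

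The first key step is to isolate the cancellation $J_2+J_4=0$. Using $(v\cdot\nablah v_z)\cdot v_z\vert v_z\vert^{q-2}=v\cdot\nablah(\vert v_z\vert^q/q)$ and $(w\,\partial_{zz}v)\cdot v_z\vert v_z\vert^{q-2}=w\,\partial_z(\vert v_z\vert^q/q)$, integration by parts horizontally (exploiting $v=0$ on $\Gamma_l$) and vertically (exploiting $w=0$ on $\Gamma_u\cup\Gamma_b$) converts $J_2$ into $\tfrac{1}{q}\int(\divh v)\vert v_z\vert^q$ and $J_4$ into $-\tfrac{1}{q}\int(\divh v)\vert v_z\vert^q$, so that $J_2+J_4=0$. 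This is the $\vert v_z\vert^{q-2}v_z$-testing analogue of the classical identity $\int(u\cdot\nablah u + w\,\partial_z u)\cdot u = 0$ for hydrostatic--solenoidal $u$ used in Step~3 of the proof of Proposition~\ref{prop:zweaksolutions}. Only $J_1=-\int(v_z\cdot\nablah v)\cdot v_z\vert v_z\vert^{q-2}$ and $J_3=\int(\divh v)\vert v_z\vert^q$ survive.

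The second step is to bound $J_1$ and $J_3$ using only $\Vert v\Vert_\infty$. I would integrate by parts once more to move the bare horizontal derivative off $v_j$ onto the product $v_{z,i}v_{z,j}\vert v_z\vert^{q-2}$ (licit since $v\vert_{\Gamma_l}=0$); expanding the resulting derivative and bounding pointwise gives
\begin{align*}
\vert J_1\vert+\vert J_3\vert\le C_q\Vert v\Vert_\infty\int_\Omega\vert\nablah v_z\vert\,\vert v_z\vert^{q-1}\,\d(x,y,z)\le C_q\Vert v\Vert_\infty\cdot A\cdot B,
\end{align*}
with $A=\bigl(\int\vert v_z\vert^{q-2}\vert\nablah v_z\vert^2\bigr)^{1/2}$ and $B=\Vert v_z\Vert_{L^q}^{q/2}$ by Cauchy--Schwarz. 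An $\varepsilon$-Young inequality absorbs a fraction of $A^2$ into the dissipation on the left-hand side, and after multiplication by $q$ the resulting differential inequality reads
\begin{align*}
\tddt\Vert v_z\Vert_{L^q}^q+\int_\Omega\vert v_z\vert^{q-2}\vert\nablah v_z\vert^2\le C_q'(\Vert v\Vert_\infty^2+1)\Vert v_z\Vert_{L^q}^q,
\end{align*}
which implies the stated bound; the quadratic power of $\Vert v\Vert_\infty$ is accommodated by the $+1$-slack on the two factors on the right-hand side and is in any case harmless for the subsequent logarithmic-Grönwall argument, where $\Vert v\Vert_\infty$ enters only through the logarithmic Sobolev inequality of Proposition~\ref{prop_log_sob}.

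The main technical point is the cancellation $J_2+J_4=0$: it crucially invokes both boundary conditions (lateral Dirichlet on $v$ and $w=0$ on $\Gamma_u\cup\Gamma_b$) and is precisely what keeps the prefactor at $\Vert v\Vert_\infty$ rather than at $\Vert\nabla v\Vert_\infty$ or a stronger norm. Correctly tracking the two $\tfrac{1}{q}\int(\divh v)\vert v_z\vert^q$ contributions with opposite signs is the essential algebraic observation on which the $L^q$-estimate rests.
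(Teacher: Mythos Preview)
Your argument is correct and mirrors the paper's proof: differentiate in $z$, test with $\vert v_z\vert^{q-2}v_z$, use the cancellation $J_2+J_4=0$ via the divergence-free condition and the boundary conditions, then integrate by parts in $J_1,J_3$ to extract $\Vert v\Vert_\infty$ and apply Cauchy--Schwarz/Young. One cosmetic slip: for vector-valued $v_z$ the dissipation is $\int\vert v_z\vert^{q-2}\vert\nablah v_z\vert^2+(q-2)\int\vert v_z\vert^{q-2}\big\vert\nablah\vert v_z\vert\big\vert^2$, not $(q-1)\int\vert v_z\vert^{q-2}\vert\nablah v_z\vert^2$, but since the first summand alone suffices to absorb $A^2$ your estimate goes through unchanged; your remark that the proof actually yields $\Vert v\Vert_\infty^2$ (which is also what the paper's own computation gives and what is used downstream in Proposition~\ref{prop_unif_bound}) is accurate.
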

Note that this bound differs from the local one in Proposition~\ref{prop:lq} by the assumptions on $v$.

\begin{proof} Differentiating the momentum equation with respect to $z$ gives
	\begin{align*}
	\partial _t\partial _zv-\Delta _H\partial _zv
	=&\,(\nabla _H\cdot v)\partial _zv-(\partial _zv\cdot \nabla _H)v+(\int _{-h}^z\nabla_H\cdot v\d\! z)\partial _z^2v -(v\cdot \nabla _H )\partial _zv.
	\end{align*}
	Note that the last two summands of the above equation vanish after multiplication by $\vert \partial _z v\vert ^{q-2}\partial _zv,$ and integration over $\Omega ,$ since 
	\begin{align*}
	&\,\int_\Omega \left [ (\int_{-h}^z\nabla_H\cdot v\d\! z)\partial_z^2v-(v\cdot \nabla_H)\partial_zv\right ]\cdot \vert \partial _z v\vert ^{q-2}\partial _zv\d (x,y,z)\\
	=&\,\frac{1}{q}\int_\Omega \left [(\int_{-h}^z\nabla_H\cdot v\d\! z)\partial_z\vert \partial_zv\vert ^q-(v\cdot \nabla_H)\vert \partial_zv\vert ^q\right ]\d (x,y,z)=0.
	\end{align*}
	Therefore, integrating by parts and  Young's inequality imply\begin{align*}
	&\,\frac{1}{q}\ddt\Vert \partial _zv\Vert ^q_{L^q }+\int _\Omega \vert \partial _zv\vert ^{q-2}\left[ \vert \nabla _H\partial _zv\vert ^2 +(q-2)\left\vert \nabla _H\vert \partial _zv\vert \right\vert ^2\right]\d (x,y,z)\\
	{=}&\,\int _\Omega \left [(\nabla _H \cdot v)\partial _zv-(\partial _zv\cdot \nabla _H)v\right ]\cdot \vert \partial _zv\vert ^{q-2}\partial _zv\d (x,y,z)\\
	{=}&\,-\int_\Omega \left [v\cdot\nabla_H\vert \partial_zv\vert ^q-\nabla_H\cdot\left (\partial_zv\partial_zv^T \vert \partial_zv\vert ^{q-2}\right )\cdot v \right ]\d (x,y,z)\\
	{\le} &\,C_q\int _\Omega \vert v\vert \vert \partial _zv\vert ^{q-1}\vert \nabla _H\partial _zv\vert \d (x,y,z)\\
	{\le} &\,\frac{1}{2}\int _\Omega \vert \partial _zv\vert ^{q-2}\vert \nabla _H\partial _zv\vert ^2\d (x,y,z)+C_q\int _\Omega \vert v\vert ^2\vert \partial _zv\vert ^q\d (x,y,z).
	\end{align*}
	So, subtracting $\frac{1}{2}\int _\Omega \vert \partial _zv\vert ^{q-2}\vert \nabla _H\partial _zv\vert ^2\d (x,y,z)$ from the above inequality and multiplying it by $q$ 
	finishes this proof.\end{proof}

\begin{proposition}[$L^2$-estimate for $\nabla_Hv$]\label{prop_hor_fo_est}It holds for $q>2$ that
	\[
	\tddt\Vert \nabla_Hv\Vert _{L^2 }^2+\Vert\Delta_Hv\Vert _{L^2 }^2\le C\Vert v\Vert _{L^\infty } ^2\Vert\nabla_Hv\Vert _{L^2 }^2+C(\Vert \partial _zv\Vert ^{\frac{4q}{q-2}}_{L^q }+1).\]
\end{proposition}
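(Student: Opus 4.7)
The plan is to test the momentum equation in~\eqref{eq:primeqhorvisc} with $-\Deltah v$ and integrate over $\Omega$. Exploiting $v\vert_{\Gamma_l}=0$, integration by parts in the horizontal variables yields
\begin{equation*}
\tfrac{1}{2}\tddt\Vert \nablah v\Vert_{L^2}^2+\Vert \Deltah v\Vert_{L^2}^2 = \langle v\cdot\nablah v,\Deltah v\rangle + \langle w\partial_z v,\Deltah v\rangle + \langle \nablah p,\Deltah v\rangle.
\end{equation*}
The advective term $\langle v\cdot\nablah v,\Deltah v\rangle$ is controlled directly using H\"older with $\Vert v\Vert_{L^\infty}$ and Young, giving $\tfrac{1}{8}\Vert \Deltah v\Vert^2 + C\Vert v\Vert_{L^\infty}^2\Vert \nablah v\Vert^2$, which produces precisely the first term on the right-hand side of the claim.

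For the pressure contribution, since $p$ depends only on $(x,y)$ and $\divh \overline v = 0$, we have $\langle \nablah p,\Deltah v\rangle_\Omega = 2h\langle \nablah p,\Deltah\overline v\rangle_G$. Applying Lemma~\ref{lem:2DStokes} to the vertically averaged equation~\eqref{eq:bar} yields the bound $\Vert \nablah p\Vert_{L^2(G)}\leq \Vert \overline F\Vert_{L^2(G)} + \Vert \Deltah \overline v\Vert_{L^2(G)}$, where $\overline F$ consists of $\divh\overline{v\otimes v}$-type terms whose $L^2$-norm is controlled by Corollary~\ref{le_l4_est} and Lemma~\ref{le1_l4_est}. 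The contribution $\Vert \Deltah \overline v\Vert^2_{L^2(G)}$ can then be absorbed into the left-hand side after a Young split, using the orthogonality $\Vert \Deltah \overline v\Vert_{L^2(\Omega)}^2 \leq \Vert \Deltah v\Vert_{L^2(\Omega)}^2$.

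The main obstacle is the vertical advection $\langle w\partial_z v,\Deltah v\rangle$. Using the pointwise bound $|w(x,y,z)|\leq (2h)^{1/2}\Vert \divh v(x,y,\cdot)\Vert_{L^2_z}$ together with the 2D Gagliardo--Nirenberg inequality applied to the function $(x,y)\mapsto\Vert \divh v(x,y,\cdot)\Vert_{L^2_z}$ on $G$, one derives $\Vert w\Vert_{L^{2q/(q-2)}_{xy}L^\infty_z}\leq C\Vert \nablah v\Vert^{(q-2)/q}\Vert \Deltah v\Vert^{2/q}$. Anisotropic H\"older in the triple $\bigl(L^{2q/(q-2)}_{xy}L^\infty_z,\,L^q_{xy}L^q_z,\,L^2_{xy}L^{q/(q-1)}_z\bigr)$, using boundedness of $(-h,h)$ to pass from $L^{q/(q-1)}_z$ to $L^2_z$, then gives
\begin{equation*}
|\langle w\partial_z v,\Deltah v\rangle|\leq C\Vert \nablah v\Vert^{(q-2)/q}\Vert \Deltah v\Vert^{(q+2)/q}\Vert \partial_z v\Vert_{L^q}.
\end{equation*}
Young with conjugate exponents $\bigl(2q/(q+2),2q/(q-2)\bigr)$ absorbs the $\Vert \Deltah v\Vert$-factor into $\tfrac{1}{8}\Vert \Deltah v\Vert^2$, leaving $C\Vert\nablah v\Vert^{2}\Vert \partial_z v\Vert_{L^q}^{2q/(q-2)}$; a subsequent Young with exponents $(2,2)$, combined with the elementary bound $x^{2q/(q-2)}\leq 1 + x^{4q/(q-2)}$ and the interpolation $\Vert \nablah v\Vert^2\leq C\Vert v\Vert_{L^\infty}\Vert \Deltah v\Vert_{L^2}$ (which follows from $\Vert \nablah v\Vert^2=-\langle v,\Deltah v\rangle$) to reabsorb the residual $\Vert \nablah v\Vert^4$-factor, produces the $C(\Vert \partial_z v\Vert_{L^q}^{4q/(q-2)}+1)$ contribution. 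Collecting all terms and absorbing the remaining $\Vert \Deltah v\Vert^2$-contributions into the left-hand side yields the claimed inequality.
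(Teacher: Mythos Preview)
Your treatment of $\langle v\cdot\nablah v,\Deltah v\rangle$ matches the paper's, but two steps do not close as written.

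The main gap is the claimed reabsorption of $\|\nablah v\|_{L^2}^4$. From $\|\nablah v\|^2\le C\|v\|_{L^\infty}\|\Deltah v\|_{L^2}$ you obtain $\|\nablah v\|^4\le C\|v\|_{L^\infty}^2\|\Deltah v\|_{L^2}^2$, and this term cannot be absorbed into the single $\|\Deltah v\|_{L^2}^2$ on the left, since $\|v\|_{L^\infty}$ is not bounded \emph{a priori} --- indeed controlling $\|v\|_{L^\infty}$ is precisely the purpose of this whole section. Any attempt to Young-split $C\|v\|_{L^\infty}\|\Deltah v\|\|\nablah v\|^2$ leads back to $C\|v\|_{L^\infty}^2\|\nablah v\|^4$, which is circular. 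The paper avoids this by a different anisotropic split of $\langle w\partial_z v,\Deltah v\rangle$ which produces directly
\[
\int _G\int_{-h}^h\vert \nabla_Hv\vert\,\d z\int_{-h}^h\vert\partial_zv\vert\vert \Delta_Hv\vert\,\d z\,\d (x,y)\le C\bigl(1+\Vert\Deltah v\Vert_{L^2}^{1/2-1/q}\bigr)\Vert \partial_zv\Vert _{L^q }\Vert\Deltah v\Vert _{L^2 },
\]
with no factor of $\|\nablah v\|$, so Young closes immediately. Your route can be salvaged too: in the interpolation $\|\nablah v\|^2=-\langle v,\Deltah v\rangle$ use Cauchy--Schwarz with $\|v\|_{L^2}$ rather than $\|v\|_{L^\infty}$, and invoke the energy equality~\eqref{eq:ei} to bound $\|v\|_{L^2}\le\|v_0\|_{L^2}$. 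This gives
\[
\|\nablah v\|^2\|\partial_z v\|_{L^q}^{2q/(q-2)}\le \|v_0\|_{L^2}\|\Deltah v\|_{L^2}\|\partial_z v\|_{L^q}^{2q/(q-2)}\le \tfrac{1}{8}\|\Deltah v\|_{L^2}^2+C_{v_0}\|\partial_z v\|_{L^q}^{4q/(q-2)},
\]
which closes (at the harmless cost of $C$ depending on $\|v_0\|_{L^2}$).

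The pressure treatment is also problematic. After a Young split, your bound via Lemma~\ref{lem:2DStokes} produces a term $C_\epsilon\cdot 2h\|\Deltah\overline v\|_{L^2(G)}^2$, which by orthogonality of $\tilde v$ and $\overline v$ can saturate the full $\|\Deltah v\|_{L^2(\Omega)}^2$ on the left (with equality when $v$ is $z$-independent); it therefore cannot be absorbed with room to spare. Moreover, Corollary~\ref{le_l4_est} and Lemma~\ref{le1_l4_est} give only \emph{time-integrated} control of $\overline F$, not the pointwise-in-$t$ bound needed for a differential inequality; the correct pointwise estimate $\|\overline F\|_{L^2(G)}\le C\|v\|_{L^\infty}\|\nablah v\|_{L^2}$ should be used instead. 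In the paper the pressure simply does not appear: the rigorous estimate is performed on the Galerkin system (cf.\ Step~1 in the proof of Proposition~\ref{prop:locstrong}), where testing against functions in $H_{\overline{\sigma},n}$ annihilates $\nablah p$ automatically, so no pressure estimate is needed.
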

\begin{proof}
	Multiplying the momentum equation by $-\Delta_Hv$, and integrating over $\Omega $, it follows from integrating by parts that\begin{align*}
	&\frac{1}{2}\ddt\Vert \nabla_Hv\Vert _{L^2 }^2+\int_\Omega \vert \Delta_Hv\vert ^2\d (x,y,z)\\=&\, \int_\Omega \left [(v\cdot\nabla_H)v-\left (\int_{-h}^z\divh v\right )\partial_zv\right ]\cdot\Delta_Hv\d (x,y,z)\\
	\le &\, C\Vert v\Vert _{L^\infty }\Vert \nabla_Hv\Vert _{L^2 }\Vert \Delta_Hv\Vert _{L^2 }+\int _G\int_{-h}^h\vert \nabla_Hv\vert\d\! z\int_{-h}^h\vert\partial_zv\vert\vert \Delta_Hv\vert \d\! z\d (x,y,z).
	\end{align*}
	Using a series of standard integral inequalities, one can show that the later summand of the right hand side can be estimated by suitable terms. More specifically, \[
	\int _G\int_{-h}^h\vert \nabla_Hv\vert\int_{-h}^h\vert\partial_zv\vert\vert \Delta_Hv\vert \le C(1+\Vert\Delta_Hv\Vert ^{\frac{1}{2}-\frac{1}{q}})\Vert \partial_zv\Vert _{L^q }\Vert\Delta_Hv\Vert _{L^2 }.\]
	The above and Young's inequality imply
	\begin{align*}
	&\frac{1}{2}\ddt\Vert \nabla_Hv\Vert _{L^2 }^2+\Vert\Delta_Hv\Vert _{L^2 }^2\\
	&\qquad\qquad \leq C\left [\Vert v\Vert _{L^\infty }\Vert \nabla_Hv\Vert _{L^2 }+(1+\Vert\Delta_Hv\Vert_{L^2 }^{\frac{1}{2}-\frac{1}{q}})\Vert \partial_zv\Vert _{L^q }\right ]\Vert\Delta_Hv\Vert _{L^2 }\\
	&\qquad\qquad \leq \frac{1}{2}\Vert\Delta_Hv\Vert _{L^2 }^2+ C(\Vert v\Vert _{L^\infty } ^2\Vert\nabla_Hv\Vert _{L^2 }^2+\Vert \partial _zv\Vert ^{\frac{4q}{q-2}}_{L^q }+1).
	\end{align*}
	So, subtracting $\frac{1}{2}\Vert\Delta_Hv\Vert _{L^2 }^2$ finishes the proof.
\end{proof}
\begin{proposition}[Uniform \textit{a priori} bound]\label{prop_unif_bound}
	For any finite time ${T}$, we have\[\sup_{t\in [0,{T}]}(\Vert \nabla v\Vert _{L^2 }^2+\Vert \partial _zv\Vert _{2+\eta}^{2+\eta})+\int_0^{{T}}\Vert \nabla_H\nabla v\Vert _{L^2 }^2\d\! t\le C_{\eta ,h, {T}}(\Vert v_0\Vert _{H^1_\eta}),
	\] for an increasing function $C_{\eta ,h, {T}}$ depending only on $\eta $, $h$, ${T}$ and with  $\Vert v_0\Vert _{H^1_\eta}=\Vert v_0\Vert_{H^1}+\Vert\partial_zv_0\Vert _{2+\eta}+\Vert v_0\Vert _{L^\infty }$.
\end{proposition}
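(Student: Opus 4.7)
My plan is to combine Propositions~\ref{prop_vert_fo_est} and~\ref{prop_hor_fo_est} with the logarithmic Sobolev inequality (Proposition~\ref{prop_log_sob}) to derive a differential inequality for the quantity
$$F(t) := 1 + \|\nabla v(t)\|_{L^2}^2 + \|\partial_z v(t)\|_{L^{2+\eta}}^{2+\eta}$$
that can be closed by a nonlinear Gr\"onwall (Bihari) argument in the spirit of Lemma~\ref{lem:nonlinGron}. Since $v$ is assumed smooth, only the a priori bound has to be established; the conclusion on strong solutions at the stated regularity then follows by a standard continuation argument.

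The first ingredient is an $L^\infty$-bound on $v$. Writing $v = \overline{v} + \tilde{v}$, Proposition~\ref{prop_lq_est} gives $\|\tilde{v}\|_{L^r} \le C(T)\sqrt{r}$ for all $r \ge 2$, while the two-dimensional embedding $\|\overline{v}\|_{L^r(G)} \lesssim \sqrt{r}\,\|\overline{v}\|_{H^1(G)}$ combined with the uniform $H^1$-bound on $\overline{v}$ furnished by Lemma~\ref{le1_l4_est} yields the corresponding estimate for $\overline{v}$. Hence $\sup_{r\ge 2} \|v(t)\|_{L^r}/\sqrt{r} \le C(T)$. Applying Proposition~\ref{prop_log_sob} with $\lambda = 1/2$ and $p = (p_H, p_H, 2+\eta)$, where $p_H > 2(2+\eta)/\eta$ is chosen so that $2/p_H + 1/(2+\eta) < 1$, and controlling $\|\nabla_H v\|_{L^{p_H}}$ by Gagliardo-Nirenberg interpolation between $\|\nabla_H v\|_{L^2}$ and $\|\Delta_H v\|_{L^2}$ (using elliptic regularity for $\Delta_H$ with Dirichlet data), I obtain
$$\|v(t)\|_\infty^2 \le C(T)\log\!\bigl(e + \|\nabla_H v\|_{L^2} + \|\Delta_H v\|_{L^2} + \|\partial_z v\|_{L^{2+\eta}}\bigr).$$

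Summing the estimates of Propositions~\ref{prop_vert_fo_est} and~\ref{prop_hor_fo_est} with $q = 2+\eta$ and absorbing $\tfrac12\|\Delta_H v\|_{L^2}^2$ on the left produces
$$F'(t) + \tfrac12 \|\Delta_H v(t)\|_{L^2}^2 \le C\bigl(1 + \|v\|_\infty^2\bigr) F(t) + C\,\|\partial_z v\|_{L^{2+\eta}}^{4(2+\eta)/\eta}.$$
The super-linear last term is the genuine obstacle, and it is handled by decoupling. Proposition~\ref{prop_vert_fo_est} by itself reads $\Phi'(t) \le C(\|v\|_\infty+1)(\Phi(t)+1)$ for $\Phi(t) := \|\partial_z v(t)\|_{L^{2+\eta}}^{2+\eta}$, so classical Gr\"onwall gives $\Phi(t) \le (\Phi(0)+1)\exp\bigl(C\int_0^t(\|v\|_\infty+1)\,ds\bigr)$. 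Once $\int_0^T \|v\|_\infty\,dt$ is controlled by the logarithmic bound above together with the energy identity~\eqref{eq:ei} (in particular $\|\nabla_H v\|_{L^2}^2 \in L^1(0,T)$), the quantity $\Phi$ is uniformly bounded on $[0,T]$, and $\|\partial_z v\|_{L^{2+\eta}}^{4(2+\eta)/\eta} = \Phi^{4/\eta}$ becomes a bounded source in the horizontal estimate. Substituting back, the combined differential inequality takes the form $F'(t) \le g(t)\,F(t)\log(e+F(t)) + h(t)$ with $g,h \in L^1(0,T)$, to which Bihari's inequality applies and yields an iterated-exponential bound of the claimed form $C_{\eta,h,T}(\|v_0\|_{H^1_\eta})$. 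The hard part is thus precisely this coupling: the hyperbolic character of the vertical direction prevents smoothing of $\partial_z v$ in $L^{2+\eta}$, forcing one to treat the vertical first-order estimate as a linear ODE in $\Phi$ driven only by $\|v\|_\infty$, independent of the horizontal $H^2$-regularity, and then to close the horizontal estimate separately via a logarithmic Gr\"onwall.
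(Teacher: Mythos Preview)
Your decoupling strategy has a genuine circularity. You propose to first bound $\Phi(t)=\Vert\partial_z v(t)\Vert_{L^{2+\eta}}^{2+\eta}$ by applying Gr\"onwall to $\Phi'\le C(\Vert v\Vert_\infty+1)(\Phi+1)$, and for this you need $\int_0^T\Vert v\Vert_\infty\,dt$ to be controlled \emph{a priori} by the data. But your only tool for $\Vert v\Vert_\infty$ is the logarithmic Sobolev inequality, which in your own formulation gives
\[
\Vert v\Vert_\infty^2\;\lesssim\;\log\bigl(e+\Vert\nabla_H v\Vert_{L^2}+\Vert\Delta_H v\Vert_{L^2}+\Vert\partial_z v\Vert_{L^{2+\eta}}\bigr).
\]
The right-hand side contains $\Vert\Delta_H v\Vert_{L^2}$ and $\Vert\partial_z v\Vert_{L^{2+\eta}}=\Phi^{1/(2+\eta)}$, neither of which is controlled by the basic energy identity \eqref{eq:ei} (which only yields $\Vert\nabla_H v\Vert_{L^2}\in L^2_t$). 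So you cannot bound $\int_0^T\Vert v\Vert_\infty\,dt$ before closing the horizontal second-order estimate and the vertical $L^{2+\eta}$-estimate themselves; the argument loops back on itself, and your claimed ``$g,h\in L^1(0,T)$'' in the final Bihari step is not justified.

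The paper closes the loop differently: instead of decoupling, it builds a single Lyapunov quantity $A=A_1+A_1^{\lambda}+A_2$ with $\lambda=4/\eta$ and $A_1\sim e+\Vert\partial_z v\Vert_{L^2}^2+\Vert\partial_z v\Vert_{L^{2+\eta}}^{2+\eta}$. The point of the extra power $A_1^{\lambda}$ is exactly to absorb the super-linear source $\Vert\partial_z v\Vert_{L^{2+\eta}}^{4(2+\eta)/\eta}\le A_1^{\lambda}$ from Proposition~\ref{prop_hor_fo_est} as a \emph{linear} term in $A$; at the same time, Proposition~\ref{prop_vert_fo_est} gives $\tfrac{d}{dt}A_1^{\lambda}=\lambda A_1^{\lambda-1}A_1'\lesssim(1+\Vert v\Vert_\infty^2)A_1^{\lambda}$. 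One then obtains $A'+B\le C(1+\Vert v\Vert_\infty^2)A$ together with $(1+\Vert v\Vert_\infty^2)\le C\log B$, where $B$ collects the dissipative pieces $\Vert\Delta_H v\Vert_{L^2}^2+\Vert\nabla_H\partial_z v\Vert_{L^2}^2+A_1$. A logarithmic Gr\"onwall lemma (cf.\ \cite[Lemma 2.5]{CaoLiTiti2017}) applied to $A'+B\le CA\log B$ then closes everything simultaneously, without ever needing a stand-alone bound on $\int_0^T\Vert v\Vert_\infty\,dt$.
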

Recall that the definition of $\Vert \cdot\Vert _{H^1_\eta}$ is given in Theorem~\ref{thm:IWP_glob}.
\begin{proof}
	Summing up Proposition~\ref{prop_vert_fo_est} and \ref{prop_hor_fo_est}, one can show
	\[ \ddt A(t)+B(t)\le C(1+\Vert v\Vert _{L^\infty }^2)A(t),\]
	where\begin{align*}
	A=A_1+A_1^\lambda+A_2,\quad B=A_1+B_1+B_2,\quad \lambda=4/\eta\\
	A_1=\Vert \partial_zv\Vert_{L^2 }^2+\Vert \partial_zv\Vert_{2+\eta}^{2+\eta}+e,\quad B_1=\Vert \nabla_H\partial_zv\Vert_{2}^{2}\\
	A_2=\Vert\nablah v \Vert _{L^2 }^2+e,\quad B_2=\Vert \Deltah v\Vert_{L^2 }^2+e.
	\end{align*}
	We will show \[
	(1+\Vert v\Vert _{L^\infty }^2)\le C\log B.\] Thus a logarithmic type  Gr\" onwall  inequality (cf. e.g. \cite[Lemma 2.5]{CaoLiTiti2017}) will imply the desired bound. By Proposition~\ref{prop_log_sob}, Proposition~\ref{prop_lq_est} and the Sobolev and horizontal Poincar\' e inequalities, one has\begin{align*}
	\Vert v\Vert_{L^\infty }^2\le &\, C\max\left \{ 1,\sup_{q\ge 2}\frac{\Vert v\Vert _{L^q }^2}{q}\right \} \log (e+\Vert \nabla_Hv\Vert_6+\Vert v\Vert _6+\Vert \partial_zv\Vert _{L^2 }+\Vert v\Vert_{L^2 })\\
	\le &\, C\log (e+\Vert \nabla_Hv\Vert_{H^1}+\Vert v\Vert _{H^1}+\Vert \partial_zv\Vert _{L^2 })\\
	\le &\, C\log (e+\Vert \nabla_Hv\Vert_{2}+\Vert \nabla\nabla_Hv\Vert _{2}+\Vert \partial_zv\Vert _{L^2 })\\
	\le &\, C\log (e+\Vert \Delta_Hv\Vert_{2}+\Vert \nabla_H\partial_zv\Vert _{2}+\Vert \partial_zv\Vert _{L^2 })\le C\log B,
	\end{align*}finishing the proof.
\end{proof}
\begin{proof}[Proof of Theorem \ref{thm:IWP_glob} (a)]
	
	Note first that a local solution can be constructed as in Proposition~\ref{prop:zweaksolutions} and~\ref{prop:locstrong} by a Galerkin scheme using the \textit{a priori} bound from Proposition~\ref{prop_unif_bound}. 
	
	Now, to prove global existence, let $T_{max}$ be the supremum over the existence times of the strong solution, and assume that $T_{max}<\infty$. Choose $$\delta <\max \{T_{max},T^*(C\Vert v_0\Vert _{H^1_\eta})\},$$ where $T^*(K)$ denotes the minimal existence time given by Theorem~\ref{thm:IWP_loc}~(b) of the strong solution of the problem with initial data of norm at most $K$, and $C$ is the constant of the previous Proposition.\\
	Since $\Vert v\Vert_{L^\infty ((0,T_{max}),H^1 )}\le C\Vert v_0\Vert _{H^1_\eta}$ there is a $\hat{T}\in (T_{max}-\delta,T_{max})$, such that $\Vert v(\hat{T})\Vert _{ H^1 }\le C\Vert v_0\Vert _{H^1_\eta}$. By the local existence of strong solutions there exists a strong solution $\hat{v}$ to the problem with initial data $v(\hat{T})$ with an existence time $T^*(\Vert v(\hat{T})\Vert _{H^1})$. Since $T^*:\mathbb{R}^+\rightarrow\mathbb{R}^+$ is monotone decreasing, it holds by the last proposition that \[T^*(\Vert v(\hat{T})\Vert _{H^1})\ge T^*(C\Vert v_0\Vert _{H^1_\eta})>\delta .\]
	Note that $v\vert _{[\hat{T},T_{max}]}=\hat{v}\vert _{[0,T_{max}-\hat{T}]}$. By uniqueness of the solution, $v$ can be extended to $\hat{T}+T^*(\Vert v(\hat{T})\Vert _{H^1})>\hat{T}+\delta >T_{max}$. A contradiction to the maximality of $T_{max}$. So, our assumption of $T_{max}<\infty$ was wrong, finishing the global existence proof.
\end{proof}

\subsection{Extension of $z$-weak solutions to global strong solutions}

\begin{proof}[Proof of Theorem~ \ref{thm:IWP_glob} (b)]
By Theorem~\ref{thm:IWP_loc} (a), there is a $z$-weak solution on $(0,T')$ for some $T'>0$, and
$\nablah v_z \in L^2((0,T'),L^2(\Omega)^{2\times 2})$
 and hence  $v(t)\in H_z^1H^1_{xy}\subset H^1(\Omega)$ for almost every $t\in (0,T')$. This has to be understood in the sense that if one has a smooth approximating sequence $(v_n)\subset C_0^0((0,T'),H_z^1H^1_{xy})$, then there exists a subsequence $(v_{n_k})$ such that $v_{n_k}(t)$ converges for almost every $t\in (0,T')$. 
 
In particular there exists $t_1>0$ with $v(t_1)\in H_{0,l}^1(\Omega)$ and $\divh \overline{v(t_1)}=0$.
 Taking $v(t_1)$ as new initial value one obtains by Theorem~\ref{thm:IWP_loc} (b) a strong solution on $(t_1,T'')$ for some $T''\in (t_1,T')$. This strong solution agrees on $(t_1,T'')$ with the original $z$-weak solution defined on $(0,T')$ since $v_{n_k}(t_1)$ converges also in $L^2$ and since $v\in C^0([0,T'], L^2)$ its $L^2$-limit is in fact $v(t_1)$. Hence by the uniqueness  of $z$-weak solutions, cf. Proposition~\ref{prop:zweaksolutions}, both agree on $(t_1,T'')$. Note that $\norm{v(t_1)}_{H^1}$ is finite, but there is no explicit control on its norm.
 
Recall that the strong solution has the regularity $v\in L^2((t_1,T''),L^2_zH^2_{xy})$ and as $z$-weak solution $v\in L^2((0,T'),H^1_zH^1_{xy})$. 
Consider $\Deltah$ and $\Delta_z=\partial_z^2$ on
\begin{align*}
D(\Deltah)=L^2_z(H_{0,xy}^1\cap L^2_zH_{xy}^2) \quad\hbox{and}\quad D(\Delta_z)=\{ v\in H^2_zL^2_{xy} \colon v_z(z=\pm h)=0 \},
\end{align*}
respectively.
These are commuting self-adjoint operators in Hilbert spaces, and hence one obtains by the mixed derivative theorem, cf. \cite[Corollary III.4.5.10]{PruessSimonett2016}, 
for $\theta\in (0,1)$
\begin{align*}
H^{1}_{z}H^{1}_{0,xy}\cap L^2_zH^2_{xy}\cap L^2_zH^1_{0,xy} &=
D(\Delta_z^{1/2}\circ \Deltah^{1/2}) \cap D(\Deltah)  \\ &\subset D(\Delta_z^{\theta/2}\circ\Deltah^{\theta/2}\circ\Deltah^{1-\theta})\\
&=D(\Delta_z^{\theta/2}\circ\Deltah^{1-\theta/2})\subset H^{\theta}_zH_{xy}^{2-\theta}.
\end{align*}
In particular for $\theta=3/4$ one has $H^{3/4}_zH^{5/4}_{xy}\subset L^\infty_{z}L^\infty_{xy}=L^\infty(\Omega)$.
Moreover, by Proposition~\ref{prop:lq} one has that $v_z\in L^{\infty}((0,T'),L^\eta(\Omega)^2)$  since by assumption here $\partial_z v_0\in L^\eta(\Omega)$. Putting the pieces together, one deduces that 
\begin{align*}
v\in L^2((t_1,T''),H^1(\Omega)^2\cap \{v\in L^2(\Omega)^2\colon v_z\in L^\eta(\Omega)^2\} \cap L^{\infty}(\Omega)^2)
\end{align*}
and therefore
for almost every $t\in (t_1,T'')$ one has
\begin{align*}
v(t)\in H^1_\eta = H^1(\Omega)^2\cap \{v\in L^2(\Omega)^2\colon v_z\in L^\eta(\Omega)^2\} \cap L^{\infty}(\Omega)^2.
\end{align*}
Following the previous arguments one takes now such $t_2\in (t_1,T'')$ as new initial time, and one ends up in the situation of Theorem~\ref{thm:IWP_glob} (a) which gives that $v$ extends to a global strong solution on $(t_2,T)$ for any $T>0$. In fact $0<t_1<t_2<T''$ have been arbitrarily small, and therefore for $\delta\geq t_2$ the statement follows. 

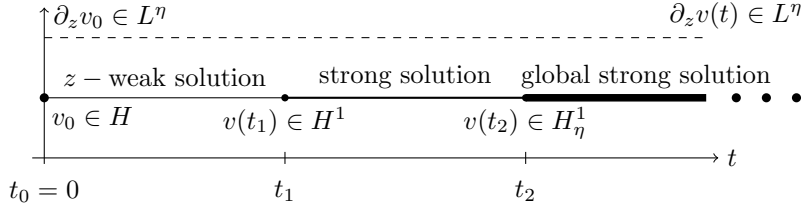
\begin{figure}
\begin{center}

\begin{tikzpicture}[scale=0.8]

    \draw[->] (-0.2,1) -- (11.2,1) node[right] {$t$};
    \draw[->] (0,0.9) -- (0,3.5) node[above] {$ $};

    \draw (0,0.9) node[below=3pt] {$t_0=0$};
    \draw[-] (4,1.1) -- (4,0.9) node[below=3pt] {$ t_1 $};
    \draw[thin] (0,2) -- (4,2); 
     \draw (0.8,2) node[below] {$v_0\in H$};
     \draw(2,2) node[above] {$z-\hbox{weak solution}$};
\fill[] (0,2) circle (0.5ex) node[below] {};
 \draw[-] (8,1.1) -- (8,0.9) node[below=3pt] {$ t_2 $};

        \draw[thick] (4,2) -- (8,2);
        \draw (6,2) node[above] {strong solution};
        \draw (4,2) node[below] {$v(t_1)\in H^1$}; 
 \draw (8,2) node[below] {$v(t_2)\in H_\eta^1$}; 
     \draw[line width=1mm, black] (8,2) -- (11, 2);
      \fill[] (11.5,2) circle (0.5ex) node[below] {};
       \fill[] (12,2) circle (0.5ex) node[below] {};
        \fill[] (12.5,2) circle (0.5ex) node[below] {};
      \draw (10, 2) node[above] {global strong solution}; 

\draw[thin, dashed] (0,3) -- (11,3);
\draw (1.1,3) node[above] {$\partial_z v_0\in L^\eta$}; 

\draw (11.5,3) node[above]   {$\partial_z v(t)\in L^\eta$};

        \fill[] (4,2) circle (0.4ex) node[below] {};

\fill[] (8,2) circle (0.4ex) node[below] {};

\end{tikzpicture}
\caption{From $z$-weak to global solution}\label{fig:ext}
\end{center}
\end{figure}

\end{proof}

\section{Some inequalities}\label{sec:ineq}
	
\begin{lemma}[Poincar\'e inequality for lateral vanishing trace] \label{lem:poincare} There exists a constant $C>0$ such that 
\begin{align*}
||v||_{L^2(\Omega)} &\leq C ||\nablah v||_{L^2(\Omega)} \quad \hbox{for} \quad v\in  L^2((-h,h),H^1_0(G)),\\
||\nablah v||_{L^2(\Omega)} &\leq C ||\Deltah v||_{L^2(\Omega)} \quad \hbox{for} \quad v\in  L^2((-h,h), H^2(G)\cap H_0^1(G)).
\end{align*}
\end{lemma}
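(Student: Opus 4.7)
The plan is to reduce both inequalities to well-known two-dimensional estimates applied slice-wise in the horizontal plane, and then to integrate in the vertical variable. Since $G \subset \mathbb{R}^2$ is a bounded smooth domain, the classical Poincar\'e inequality on $G$ provides a constant $C_G>0$ such that $\|u\|_{L^2(G)} \leq C_G \|\nablah u\|_{L^2(G)}$ for every $u \in H^1_0(G)$.

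For the first inequality, I would use that, by definition of $L^2((-h,h),H^1_0(G))$ and Fubini, the slice $v(\cdot, z)$ lies in $H^1_0(G)$ for almost every $z\in (-h,h)$. Applying the two-dimensional Poincar\'e inequality slice-wise, squaring and integrating in $z$ over $(-h,h)$ yields
\begin{equation*}
\|v\|_{L^2(\Omega)}^2 = \int_{-h}^h \|v(\cdot,z)\|_{L^2(G)}^2\,\dz \leq C_G^2 \int_{-h}^h \|\nablah v(\cdot,z)\|_{L^2(G)}^2\,\dz = C_G^2 \|\nablah v\|_{L^2(\Omega)}^2,
\end{equation*}
which is the first claim with $C = C_G$.

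For the second inequality, I proceed analogously: for almost every $z\in (-h,h)$ the slice $v(\cdot,z)$ lies in $H^2(G)\cap H^1_0(G)$, so that an integration by parts on $G$ is justified with vanishing boundary terms. This gives
\begin{equation*}
\|\nablah v(\cdot,z)\|_{L^2(G)}^2 = -\langle v(\cdot,z), \Deltah v(\cdot,z)\rangle_G \leq \|v(\cdot,z)\|_{L^2(G)}\,\|\Deltah v(\cdot,z)\|_{L^2(G)}.
\end{equation*}
Using the first (slice-wise) inequality on the right-hand side and dividing by $\|\nablah v(\cdot,z)\|_{L^2(G)}$ gives $\|\nablah v(\cdot,z)\|_{L^2(G)} \leq C_G \|\Deltah v(\cdot,z)\|_{L^2(G)}$, and integrating the square in $z$ concludes the second claim.

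There is no real obstacle: everything reduces to standard two-dimensional facts about the Dirichlet Laplacian on $G$, and the only care needed is the measurability/Fubini step which is immediate from the definition of the Bochner-type spaces $L^2((-h,h),H^1_0(G))$ and $L^2((-h,h),H^2(G)\cap H^1_0(G))$. In particular, the constant $C$ in both estimates depends only on $G$ through the planar Poincar\'e constant $C_G$, and is independent of $h$.
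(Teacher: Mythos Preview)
Your proof is correct and follows essentially the same slice-wise strategy as the paper: apply the two-dimensional Poincar\'e inequality on $G$ for each fixed $z$ and integrate over $(-h,h)$. For the second inequality the paper first bounds $\|v\|_{L^2(\Omega)}$ by $\|\Deltah v\|_{L^2(\Omega)}$ and then uses Young's inequality on $\Omega$, whereas your slice-wise Cauchy--Schwarz argument is slightly more direct and yields the sharper constant $C_G$, independent of $h$ as you observe.
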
	
	\begin{proof}
		Note that by the classical $2D$-Poincar\'e inequality for some $C_{2D}>0$
		\begin{align*}
		||v(\cdot,z)||^2_{L^2(G)} \leq C_{2D}^2 ||\nablah v(\cdot, z)||^2_{L^2(G)} \quad \hbox{  for allmost every } z\in (-h,h), 
		\end{align*} 
		and integrating with respect to $z$ gives the first estimate with $C=C_{2D}\sqrt{2h}$.
		
		For the second inequality, note that  
		there is a $C_{2D}>0$ such that
		\begin{align*}
		||v(\cdot,z)||^2_{L^2(G)} \leq C_{2D}^2 ||\Deltah v(\cdot, z)||^2_{L^2(G)} \quad \hbox{  for allmost every } z\in (-h,h), 
		\end{align*} 
		and integrating with respect to $z$ gives $	||v||^2_{L^2(\Omega)} \leq C ||\Deltah v||^2_{L^2(\Omega)}.$
		Using this, Cauchy-Schwartz and Young's inequality yields
		\begin{align*}
\langle \nablah v, \nablah v \rangle_{L^2(\Omega)} =  \langle v, \Deltah v \rangle_{L^2(\Omega)} &\leq \frac{1}{2}\norm{v}^2 + \frac{1}{2}\norm{\Deltah v}^2 \leq\frac{(C+1)}{2}\norm{\Deltah v}^2. \qedhere
		\end{align*}	
	\end{proof}
	
	The following inequalities is helpful 
	for proving local \textit{a priori} estimates.  	
	\begin{lemma}[Tri-linear estimates]\label{lemma:lowreg}\ \\
		a) Let $f,g\in L^2((-h,h),H^1_0(G))$ and $h\in H^1((-h,h),L^2(G))$. Then
		\begin{align*}
		|\left< fg,h \right>|&\leq c \norm{\nablah f}^{1/2}_{L^2}\norm{f}^{1/2}_{L^2}
		\norm{\nablah g}^{1/2}_{L^2}\norm{g}^{1/2}_{L^2}
		\left(\|\partial_z h\|^{1/2}_{L^{2}}\|h\|^{1/2}_{L^{2}}+\|h\|_{L^{2}}\right).
		\end{align*}
		b) Let $f\in L^2(\Omega)$, $g\in H^1(\Omega)$ and $h\in L^2((-h,h),H^1_0(G))$. Then
		\begin{multline*}
		|\left< fg,h \right>|\\ \leq c \norm{f}_{L^2}  \norm{\nablah h}_{L^2}^{1/2}
		\norm{h}_{L^2}^{1/2} (\norm{g}_{L^2} +\norm{\partial_z g}_{L^2})^{1/2}
		(\norm{g}_{L^2} +\norm{\nablah g}_{L^2})^{1/2}.
		\end{multline*}
	\end{lemma}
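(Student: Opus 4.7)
My plan is to prove both tri-linear estimates by exploiting the anisotropic structure of the domain, combining the $2$D Ladyzhenskaya inequality on the horizontal slices $G$ with the $1$D Agmon embedding $H^1(-h,h)\hookrightarrow L^\infty(-h,h)$ in the vertical direction. The two parts differ in which factor receives the vertical $L^\infty_z$-treatment: the $h$-factor in (a), since it is the one endowed with vertical regularity, and the $g$-factor in (b).

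For part (a), I would first fix $z$ and apply H\"older's inequality in $(x,y)$ to bound $\int_G |fgh|\,dx\,dy$ by $\norm{f(\cdot,z)}_{L^4(G)}\norm{g(\cdot,z)}_{L^4(G)}\norm{h(\cdot,z)}_{L^2(G)}$. Since $f(\cdot,z),g(\cdot,z)\in H^1_0(G)$ for almost every $z$, the $2$D Ladyzhenskaya inequality yields $\norm{f(\cdot,z)}_{L^4(G)}^2 \le c\norm{f(\cdot,z)}_{L^2(G)}\norm{\nablah f(\cdot,z)}_{L^2(G)}$, and similarly for $g$. Integrating over $z$, I would pull out $\sup_z \norm{h(\cdot,z)}_{L^2(G)}$ and estimate it by $1$D Agmon applied to the scalar function $z\mapsto \norm{h(\cdot,z)}_{L^2(G)}$, whose $z$-derivative is dominated by $\norm{\partial_z h(\cdot,z)}_{L^2(G)}$; this produces the factor $c(\norm{\partial_z h}_{L^2}^{1/2}\norm{h}_{L^2}^{1/2}+\norm{h}_{L^2})$. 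The remaining $z$-integral of the product of four $L^2(G)$-norms raised to the power $1/2$ is then handled by H\"older in $z$ with four equal exponents $4$, producing the desired bound.

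For part (b), I would begin with Cauchy--Schwarz to reduce to $|\left< fg,h\right>| \le \norm{f}_{L^2(\Omega)}\norm{gh}_{L^2(\Omega)}$, and then estimate $\norm{gh}_{L^2(\Omega)}^2$ anisotropically. For each fixed $(x,y)$, H\"older in $z$ gives $\int_{-h}^h g^2 h^2\,dz \le \norm{g(x,y,\cdot)}_{L^\infty_z}^2\norm{h(x,y,\cdot)}_{L^2_z}^2$, and $1$D Agmon provides $\norm{g(x,y,\cdot)}_{L^\infty_z}^2 \le C(\phi(x,y)^2+\phi(x,y)\psi(x,y))$ with $\phi(x,y)=\norm{g(x,y,\cdot)}_{L^2_z}$ and $\psi(x,y)=\norm{\partial_z g(x,y,\cdot)}_{L^2_z}$. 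Setting $\eta(x,y)=\norm{h(x,y,\cdot)}_{L^2_z}$, this gives $\norm{gh}_{L^2(\Omega)}^2 \le C\bigl(\norm{\phi\eta}_{L^2(G)}^2+\int_G \phi\psi\eta^2\,dx\,dy\bigr)$. A direct computation (Cauchy--Schwarz on $\int h\nablah h\,dz$) shows that $\phi\in H^1(G)$ with $\norm{\nablah\phi}_{L^2(G)}\le\norm{\nablah g}_{L^2(\Omega)}$, and that $\eta\in H^1_0(G)$ (since $h$ has vanishing lateral trace) with the analogous bound $\norm{\nablah\eta}_{L^2(G)}\le\norm{\nablah h}_{L^2(\Omega)}$. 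Applying $2$D Ladyzhenskaya to $\phi$ (non-Dirichlet form) and to $\eta$ (Dirichlet form) then gives the first-term bound $\norm{\phi\eta}_{L^2(G)}^2 \le c\norm{g}_{L^2}(\norm{g}_{L^2}+\norm{\nablah g}_{L^2})\norm{h}_{L^2}\norm{\nablah h}_{L^2}$.

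The main obstacle is the cross term $\int_G \phi\psi\eta^2\,dx\,dy$, because $\psi$ lies only in $L^2(G)$ without any horizontal regularity (which would require $\nablah\partial_z g\in L^2(\Omega)$, beyond the hypotheses). My plan is to bound it by $\norm{\phi\eta}_{L^2(G)}\norm{\psi\eta}_{L^2(G)}$ via Cauchy--Schwarz, reuse the estimate on $\norm{\phi\eta}_{L^2(G)}$, and control $\norm{\psi\eta}_{L^2(G)}$ through a H\"older split pairing $\norm{\psi}_{L^2(G)}$ with a higher $L^p_{xy}$-norm of $\eta$ (available from $\eta\in H^1_0(G)$ via $2$D Gagliardo--Nirenberg), matching exponents by Young's inequality. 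Collecting all contributions yields the claimed product structure $\norm{h}_{L^2}\norm{\nablah h}_{L^2}(\norm{g}_{L^2}+\norm{\partial_z g}_{L^2})(\norm{g}_{L^2}+\norm{\nablah g}_{L^2})$ on the right-hand side.
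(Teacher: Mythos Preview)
Your proof of part (a) is correct and essentially identical to the paper's: slice-wise H\"older $L^4\times L^4\times L^2$, the $2$D Ladyzhenskaya inequality on $f,g$, and the $1$D Agmon/Gagliardo--Nirenberg estimate on $z\mapsto\|h(\cdot,z)\|_{L^2(G)}$ to control $\|h\|_{L^\infty_zL^2_{xy}}$.

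For part (b) the paper does not give its own argument but refers to \cite[Lemma~2.1(a)]{Saal}, so there is no in-paper proof to compare against; however, your outline contains a genuine gap. After your Cauchy--Schwarz reduction and vertical Agmon step you arrive at the cross term
\[
\int_G \phi\,\psi\,\eta^2\,\d(x,y),\qquad
\phi=\norm{g(x,y,\cdot)}_{L^2_z}\in H^1(G),\quad \psi=\norm{\partial_z g(x,y,\cdot)}_{L^2_z}\in L^2(G),\quad \eta=\norm{h(x,y,\cdot)}_{L^2_z}\in H^1_0(G),
\]
and you propose to bound it via $\norm{\phi\eta}_{L^2(G)}\norm{\psi\eta}_{L^2(G)}$ followed by ``a H\"older split pairing $\norm{\psi}_{L^2(G)}$ with a higher $L^p_{xy}$-norm of $\eta$''. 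But any H\"older split of $\norm{\psi\eta}_{L^2(G)}$ that places $\psi$ in $L^2(G)$ forces $\eta$ into $L^\infty(G)$, and $H^1_0(G)\not\hookrightarrow L^\infty(G)$ in two dimensions. More generally, the intermediate bound you would need,
\[
\norm{\phi\eta^2}_{L^2(G)}\;\le\; c\,\norm{\phi}_{H^1(G)}\,\norm{\eta}_{L^2(G)}\,\norm{\nabla\eta}_{L^2(G)},
\]
is \emph{false}: take $\phi$ with a logarithmic singularity at a point (so $\norm{\phi}_{H^1}\sim 1$ but $\phi$ unbounded) and let $\eta$ be a normalized bump of radius $r$ concentrated at that point; then the left side scales like $(\log\tfrac1r)\,r^{-1}$ while the right side scales like $r^{-1}$, so the ratio diverges as $r\to 0$. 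Thus this route cannot close.

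The usual way to avoid the $2$D borderline is to reverse the order of operations: first apply H\"older and Ladyzhenskaya \emph{slice-wise in $(x,y)$} to obtain
\[
|\langle fg,h\rangle|\le c\int_{-h}^h \norm{f(z)}_{L^2(G)}\norm{g(z)}_{L^4(G)}\norm{h(z)}_{L^4(G)}\,\dz
\le c\,\norm{g}_{L^\infty_zL^4_{xy}}\,\norm{f}_{L^2(\Omega)}\,\norm{h}_{L^2(\Omega)}^{1/2}\norm{\nablah h}_{L^2(\Omega)}^{1/2},
\]
and then control $\norm{g}_{L^\infty_zL^4_{xy}}$ through the trace/interpolation inequality for functions in $H^1_zL^2_{xy}\cap L^2_zH^1_{xy}$ (the multiplicative trace estimate into $[L^2(G),H^1(G)]_{1/2}=H^{1/2}(G)\hookrightarrow L^4(G)$), which yields the anisotropic factor $(\norm{g}_{L^2}+\norm{\partial_z g}_{L^2})^{1/2}(\norm{g}_{L^2}+\norm{\nablah g}_{L^2})^{1/2}$ without ever isolating $\psi$ against $\eta$ pointwise in $G$. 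Your reduction via $\norm{g(x,y,\cdot)}_{L^\infty_z}$ pointwise in $(x,y)$ loses exactly the coupling between horizontal and vertical regularity of $g$ that makes this work.
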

	\begin{proof}
		a) We have
		\begin{align*}
		|\left< fg,h \right>_{\Omega}|&=\int_{-h}^h|\left< f(z)g(z),h(z) \right>_{G}|\d\! z\\
		&\leq \int_{-h}^h \|f(z)\|_{L^4(G)}  \|g(z)\|_{L^4(G)} \|h(z)\|_{L^2(G)}\d\! z\\
		&\leq \|h\|_{L^{\infty}((-h,h),L^2(G))}\int_{-h}^h \|f(z)\|_{L^4(G)}  \|g(z)\|_{L^4(G)}\d\! z\\
		&\leq \|h\|_{L^{\infty}((-h,h),L^2(G))}\|f\|_{L^{2}((-h,h),L^4(G))}\|g\|_{L^{2}((-h,h),L^4(G))}.
		\end{align*}
		By Ladyzhenskaya's inequality $\norm{f(z)}^2_{L^4(G)}\leq c\norm{\nablah f(z)}_{L^2(G)}\norm{f(z)}_{L^2(G)}$ we obtain 
		\begin{align*}
		\|f\|^2_{L^{2}((-h,h),L^4(G))}	=\int_{-h}^h \|f(z)\|_{L^4(G)}^2 \d\! z & \leq c\int_{-h}^h \norm{\nablah f(z)}_{L^2(G)}\norm{f(z)}_{L^2(G)}\d\! z\\
		&\leq c \norm{\nablah f}_{L^2(\Omega)}\norm{f}_{L^2(\Omega)},
		\end{align*}
		and by the Gagliardo-Nirenberg interpolation inequality 
		\begin{align*}
		\|h\|_{L^{\infty}((-h,h),L^2(G))}\leq c (\|\partial_z h\|^{1/2}_{L^{2}(\Omega)}\|h\|^{1/2}_{L^{2}(\Omega)}+\|h\|_{L^{2}(\Omega)}).
		\end{align*}
		Hence we get
		\begin{align*}
		|\left< fg,h \right>_{\Omega}|&\leq c \left(\|\partial_z h\|^{1/2}_{L^{2}}\|h\|^{1/2}_{L^{2}}+\|h\|_{L^{2}}\right)\norm{\nablah f}^{1/2}_{L^2}\norm{f}^{1/2}_{L^2}\norm{\nablah g}^{1/2}_{L^2}\norm{g}^{1/2}_{L^2}.
		\end{align*}
		For the proof of $b)$ see \cite[Lemma 2.1 (a)]{Saal}.
	\end{proof}

\begin{lemma}[Non-linear Gr\"onwall inequality, cf. \cite{Dragomir}]\label{lem:nonlinGron} 
	Let $x\colon [a,b]\rightarrow \R_+$ be a continuous function that satisfies the inequality:
	\begin{align*}
	   x(t) \leq M + \int_a^t \Psi(s)\omega(x(s)) \d\! s, \quad t\in [a,b],
	\end{align*}
	where $M\geq 0$, $\Psi\colon [a,b]\rightarrow \R_+$ is continuous and $w\colon \R_+ \rightarrow \R_+$ is continuous and monotone-increasing. 
	
	Then the estimate
	 	\begin{align*}
	 	x(t) \leq \Phi^{-1}\left(\Phi(M) + \int_a^t \Psi(s)  \d\! s\right), \quad t\in [a,b],
	 	\end{align*}
	 	holds, where $\Phi\colon \R \rightarrow \R$ is given by 
	 	\begin{align*}
	 	\Phi(u):=\int_{u_0}^u \frac{\d\! s}{w(s)}, \quad u\in \R.
	 	\end{align*}
\end{lemma}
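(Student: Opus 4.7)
The plan is to follow the classical Bihari argument. First, I would introduce the majorant
\[
y(t) := M + \int_a^t \Psi(s)\, \omega(x(s))\, \d\! s, \qquad t\in [a,b],
\]
which is continuous, non-decreasing (since $\Psi,\omega\geq 0$), and satisfies $x(t)\leq y(t)$ by hypothesis. Because $\omega$ is monotone-increasing, this yields $\omega(x(t))\leq \omega(y(t))$, and since $y$ is absolutely continuous with $y'(t) = \Psi(t)\,\omega(x(t))$ almost everywhere, one obtains the differential inequality
\[
y'(t) \leq \Psi(t)\, \omega(y(t)) \qquad \text{a.e. } t\in [a,b].
\]

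Next, I would separate variables. Assuming for the moment that $M>0$ (and that $\omega$ is strictly positive on the relevant range so that $\Phi$ is a well-defined strictly increasing $C^1$-function with derivative $1/\omega$), the chain rule gives $\frac{\d}{\dt}\Phi(y(t)) = y'(t)/\omega(y(t)) \leq \Psi(t)$ almost everywhere. Integrating from $a$ to $t$ and using $y(a)=M$ yields
\[
\Phi(y(t)) \leq \Phi(M) + \int_a^t \Psi(s)\, \d\! s.
\]
Applying $\Phi^{-1}$, which is monotone-increasing, and combining with $x(t)\leq y(t)$ gives the claimed bound.

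The only delicate point — and the step I expect to require some care — is the case $M=0$ or the case where $\omega$ vanishes at $0$, since then $\Phi(M)$ may be $-\infty$ and the division by $\omega(y)$ is not immediately legitimate near $t=a$. The standard remedy is an approximation argument: replace $M$ by $M+\varepsilon$ for $\varepsilon>0$, apply the above reasoning to the corresponding majorant $y_\varepsilon(t) = M+\varepsilon + \int_a^t \Psi\,\omega(x)\,\d\! s$ (which is bounded away from $0$), obtain
\[
x(t) \leq y_\varepsilon(t) \leq \Phi^{-1}\!\left(\Phi(M+\varepsilon) + \int_a^t \Psi(s)\, \d\! s\right),
\]
and then pass to the limit $\varepsilon\downarrow 0$ using continuity of $\Phi$ and $\Phi^{-1}$ on their domains. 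A further technical point is that the conclusion is only meaningful as long as $\Phi(M)+\int_a^t \Psi(s)\,\d\! s$ lies in the range of $\Phi$; this is implicit in the statement (and is precisely the mechanism generating the time restriction $T'\leq T$ used in Step~3 of the proof of Proposition~\ref{prop:zweaksolutions}).
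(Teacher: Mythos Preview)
The paper does not give its own proof of this lemma; it is stated with a reference to \cite{Dragomir}. Your argument is the standard Bihari proof and is correct, including the approximation step for $M=0$ and the observation that the bound is only meaningful when $\Phi(M)+\int_a^t\Psi$ lies in the range of $\Phi$.
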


\end{document}